\numberwithin{equation}{section}
\newtheorem{theorem}{Theorem}[section]
\newtheorem{lemma}[theorem]{Lemma}
\newtheorem{proposition}[theorem]{Proposition}
\newtheorem{cor}[theorem]{Corollary}
\newtheorem*{maint}{Theorem \ref{maint}}
\newtheorem*{negat}{Theorem \ref{negat}}
\theoremstyle{definition}
\newtheorem{definition}[theorem]{Definition}
\theoremstyle{remark}
\newtheorem{remark}[theorem]{Remark}
\def\K{\ensuremath\mathrm{K}}
\def\LCX{\ensuremath\mathbb{LC}(X)}
\def\R{\ensuremath\mathcal{R}}
\def\T{\ensuremath\mathfrak{T}}
\def\Z{\ensuremath\mathbb{Z}}
\DeclareMathOperator{\im}{im}
\DeclareMathOperator{\supp}{supp}
\DeclareMathOperator{\Spec}{Spec}
\DeclareMathOperator{\holim}{{ho}{-}{\varinjlim}}
\DeclareMathOperator{\coker}{coker}
\DeclareMathOperator{\Prim}{Prim}
\DeclareMathOperator{\Ch}{Ch}
\title[localizing subcategories of  $\mathcal{B}(X)$]{Localizing subcategories in the bootstrap category of filtered C*-algebras}
\author{George Nadareishvili}
\begin{document}

\begin{abstract}

We use the abelian approximation for the bootstrap category of filtered C*-algebras to define a sensible notion of support for its objects. As a consequence, we provide a full classification of localizing subcategories in terms of a product of lattices of noncrossing partitions of a regular $(n+1)$-gon, where $n$ is the number of ideals in the filtration.
  
\end{abstract}
\maketitle

\section{introduction}

A full subcategory of a triangulated category is called \emph{localizing} if it is closed under suspension, formation of triangles and whatever coproducts exist in the ambient category. A general classification program aims to establish a lattice isomorphism between such localizing subcategories and a suitable computable lattice. As demonstrated for example in \cite{HS98} or \cite{Nee92}, such a classification can be used to obtain an interesting invariant or different structural information about a triangulated category. 

The classification of localizing subcategories always proceeds by defining a notion of \emph{support} for objects in a triangulated category. This is usually a canonical process of assigning a subset of a certain space to every object. Generally speaking, the support introduces a geometric approach for studying an algebraic structure. 

Once we have a good definition of support, the classification result should say, first, that any subcategory is determined uniquely by the supports of its objects; secondly, it should describe the sets that appear as supports of localizing subcategories.

If one works with a compactly generated triangulated category with small coproducts and with an action of a commutative noetherian ring $R$,  Benson-Iyengar-Krause~\cite{BIK08} define supports based on a construction of local cohomology functors with respect to the ring $R$. Then $\Spec R$ naturally serves as a locus for supports. This method is rather powerful, and classifications like~\cite{Nee92}, \cite{BIK11} and a few others fall under this theory. However, the triangulated categories we are interested in are not compactly generated in the usual sense because they do not have arbitrary small coproducts. This obstruction is not trivial, since a very crucial fact used by Benson-Iyengar-Krause, namely the classical Brown representability, does not hold. In addition, in our case, any ring $R$ that acts on a category and is large enough to accommodate a sensible notion of support is noncommutative, and therefore there is no good candidate for $\Spec R$. To explain our approach we first describe the setup.

A C*-algebra over a topological space $X$, shortly an $X$-C*-algebra, is a pair $(A, \psi)$, where $A$ is a C*-algebra and $\psi: \Prim(A) \rightarrow X$ a continuous map.  $\mathfrak{KK}(X)$ is defined to be the Kasparov category of C*-algebras over $X$: its objects are separable C*-algebras over $X$, its morphism set from $A$ to $B$ is $\mathrm{KK}_0(X;A,B)$: an $X$-equivariant version of Kasparov's bivariant K-group in degree zero~\cite{MN09}. The composition is given by the corresponding Kasparov product.

As demonstrated by Meyer-Nest~\cite{MN09}, $\mathfrak{KK}(X)$ is a triangulated category.

From now on, unless stated otherwise, assume $X$ to be finite, $T_0$, with totally ordered lattice of open subsets. Let $n=|X|$. Then a C*-algebra over~$X$ is equivalent to a C*-algebra with an increasing chain of ideals $$I_n \triangleleft I_{n-1} \triangleleft \cdots  \triangleleft I_2 \triangleleft  I_1 =A.$$ We briefly call this a filtered C*-algebra. For $n=2$ this is an extension of C*-algebras.

The \emph{bootstrap category} $\mathcal{B}(X)$ is defined as the smallest localizing subcategory of $\mathfrak{KK}(X)$ which contains all the possible ways $\mathbb{C}$ can be made into a C*-algebra over $X$~\cite{MN09}.
Another description of the bootstrap subcategory was also derived by Meyer-Nest~\cite{MN12}: a C*-algebra over $X$ belongs to  $\mathcal{B}(X)$ if and only if it satisfies an appropriate Universal Coefficient Theorem. This will be recalled in Section~\ref{prelim}.

\subsection*{Classification for the bootstrap class.} 

If the space $X$ is just a single point, we recover the original definitions of Kasparov's $\mathfrak{KK}$ category and a bootstrap class~$\mathcal{B}$, characterized by the classical Universal Coefficient Theorem by Rosenberg and Schochet~\cite{RS87}. Brown representabiliy already fails here, but we still have the action of a commutative noetherian endomorphism ring $\mathbb{Z} \cong \mathrm{End}_{\mathcal{B}}(\mathbb{C})$ of the tensor unit object $\mathbb{C} \in \mathcal{B}$. 

Ivo Dell'Ambrogio~\cite{Del11} classified localizing subcategories of the bootstrap class~$\mathcal{B}$ in terms of subsets of the spectrum  of this ring of integers. As in \cite{Nee92} and \cite{BIK08}, to define supports for objects in $\mathcal{B}$ Dell'Ambrogio uses the collection of functors $\mathbf{C}_{\mathcal{B}}=\{\mathrm{K}_*(-;\mathbb{F}_p) \mid p \in \Spec \mathbb{Z} \}$, where $\mathrm{K}_*(-;\mathbb{F}_p)$ is the $\mathrm{K}$-theory with coefficients in the residue field $\mathbb{F}_p$; that is $\mathbb{F}_p=\mathbb{Z}/p$ for $p\neq0$ and $\mathbb{F}_p=\mathbb{Q}$ for $p=0.$ More precisely, the support of the object $A \in \mathcal{B}$ is the subset of $\Spec \mathbb{Z}$ for which the corresponding functors in~$\mathbf{C}_{\mathcal{B}}$ do not vanish on $A$. We are going to generalize this classification result to $\mathcal{B}(X)$, where there is no action of a large enough commutative ring.

\subsection*{Classification for $\mathcal{B}(X)$.}

Let $Y \subseteq X$ be a \emph{locally closed subset}; that is, a subset that is a difference of two open sets in $X$. There are exactly $m=\frac{n(n+1)}{2}$ locally closed subsets in $X$, namely, the intervals $[a,b]$ for $1 \leq a\leq b \leq n$. For each such $Y$, there is a homological functor $\mathrm{FK}_Y: \mathcal{B}(X) \rightarrow \mathfrak{Ab}^{\mathbb{Z}/2}$ into the category of ${\mathbb{Z}/2}$-graded abelian groups, which computes the $\mathrm{K}$-theory of a subquotient corresponding to $Y$. We choose our collection to be $$\mathbf{C}_{\mathcal{B}(X)}=\{\mathrm{FK}_Y(-;\mathbb{F}_p) \mid p \in \Spec \mathbb{Z},\,Y \text{ is locally closed} \},$$ and define $$\supp A:=\{ (p,Y) \mid \mathrm{FK}_Y(A;\mathbb{F}_p)\neq 0 \}.$$ The support of a localizing subcategory is defined to be the union of the supports of its objects. 

This way, the supports of objects in $\mathcal{B}(X)$ live in an $m$-fold cartesian product of power sets of $\Spec \mathbb{Z}$. However, unlike in the commutative case, not all elements of this product appear as supports of some localizing subcategory. There is a dependence between functors in $\mathbf{C}_{\mathcal{B}(X)}$: for any fixed $p \in \Spec \mathbb{Z}$ there are exactly $\frac{1}{n+2}\binom {2n+2} {n+1}$ ($(n+1)$th Catalan number) different localizing subcategories with $p$ as a first coordinate in every support point.
These subcategories form a lattice isomorphic to the lattice of \emph{noncrossing partitions} -- those partitions of a regular $(n+1)$-gon which do not cross in their planar representation. Summing up, this leads to our main result:

\begin{maint}
The lattice of all localizing subcategories of $\mathcal{B}(X)$ is isomorphic to the product of lattices of noncrossing partitions of the regular $(n+1)$-gon over the indexing set $\Spec \mathbb{Z}$.
\end{maint}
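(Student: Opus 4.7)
The strategy is to combine the abelian approximation of $\mathcal{B}(X)$ with a prime-by-prime reduction, and then identify the mod-$p$ part of the classification with the combinatorics of noncrossing partitions via the representation theory of the type $A_n$ quiver.

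First I would use the Meyer--Nest Universal Coefficient Theorem (recalled in Section~\ref{prelim}) to show that every object of $\mathcal{B}(X)$ is reconstructed up to $\mathrm{KK}(X)$-isomorphism from its filtered K-theory $\{\mathrm{FK}_Y\}_Y$, and in fact admits a short projective resolution relative to this invariant. This reduces questions about localizing subcategories to questions about full subcategories of a tractable abelian target, and in particular yields the first half of the classification: a localizing subcategory $\mathcal{L}$ is uniquely determined by the set $\supp \mathcal{L}$ of its support points. For this step, one argues that any object $A$ with $\supp A \subseteq \supp \mathcal{L}$ can be assembled from generators of $\mathcal{L}$ using coproducts, suspensions, and mapping cones, by induction on the length of the filtered-K-theory resolution.

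Next I would split the classification into independent problems indexed by $p \in \Spec \mathbb{Z}$. The plan is to use the action of $\mathbb{Z}$ on $\mathcal{B}(X)$ coming from $\mathrm{End}$ of the unit object to perform, at each prime $p$, a Bockstein-style localization that isolates the $p$-local part $\mathcal{L}_p$ of a given $\mathcal{L}$, in the spirit of Dell'Ambrogio's argument in the single-point case. One then shows that $\mathcal{L}$ is recovered as the join of the $\mathcal{L}_p$, and conversely that any compatible family realises a unique global $\mathcal{L}$, yielding the product decomposition over $\Spec \mathbb{Z}$. I expect this prime-splitting step to be the principal obstacle: $\mathcal{B}(X)$ is not compactly generated in Neeman's sense, classical Brown representability fails, and the Benson--Iyengar--Krause machinery does not directly apply, so the $p$-local fragments have to be constructed by hand using the short resolutions from the abelian approximation together with the $\mathbb{Z}$-action on morphisms.

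Once the problem is reduced to a fixed prime $p$, the coefficient category becomes filtered $\mathbb{F}_p$-vector spaces, equivalently finite-dimensional representations of the linear quiver $A_n$. The indecomposable representations are parametrised by intervals $[a,b]$ with $1 \le a \le b \le n$, matching exactly the $m = n(n+1)/2$ locally closed subsets of $X$ and the functors $\mathrm{FK}_Y(-;\mathbb{F}_p)$. I would then invoke the classical Ingalls--Thomas bijection between thick (wide) subcategories of $\mathrm{rep}(A_n)$ and noncrossing partitions of the regular $(n+1)$-gon, which are counted by the Catalan number $\tfrac{1}{n+2}\binom{2n+2}{n+1}$. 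Verifying that thick coincides with localizing in this finitary $\mathbb{F}_p$-setting, and that the bijection is compatible with supports under the abelian approximation, transports the classification to the $p$-local part of $\mathcal{B}(X)$; assembling over all primes $p \in \Spec \mathbb{Z}$ then gives the product statement of the theorem.
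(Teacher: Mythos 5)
Your overall outline runs parallel to the paper's architecture (abelian approximation via the Meyer--Nest UCT, support-determination, prime-by-prime splitting, noncrossing partitions at each prime), but the key step---the $p$-local classification---is handled very differently, and the representation-theoretic route you propose has genuine gaps that the paper circumvents.

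The paper does not pass through the Ingalls--Thomas bijection at all. Instead it defines a map $\psi$ directly on supports: a $p$-local localizing subcategory $\mathcal L$ is sent to the relation on $\{1,\dots,n+1\}$ generated by $a \sim b+1 \iff \mathrm U_{[a,b]}^{\mathcal L}=\emptyset$, and then it is shown by elementary manipulations with the triangles of Lemma \ref{gen} (Mayer--Vietoris for $\R_V, \R_W$ and intervals $V,W$) that this relation is transitive and noncrossing, and surjectivity follows by a concrete ``separating decomposition'' construction on the $(n+1)$-gon. This is entirely self-contained; no equivalence with $\mathrm{rep}(A_n)$ or its derived category is ever established or invoked.

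Concerning your route: identifying the $p$-local coefficient target with $\mathrm{rep}_{\mathbb F_p}(A_n)$ is not as immediate as you assert. The target of filtrated K-theory is the category of (exact, $\mathbb Z/2$-graded, countable) $\mathcal{NT}^p$-modules, and $\mathcal{NT}$ is the category with objects all $m=n(n+1)/2$ locally closed subsets and morphisms given by natural transformations---this is much larger than the path algebra of $A_n$, and in fact Theorem \ref{reward} makes its exact $\mathbb F_p$-modules semisimple (they are free $\mathcal{NT}^p$-modules, so they split as sums of $Q^p_Y$'s). This splitting is what drives the combinatorics; it is not a restatement of indecomposable $A_n$-representations being interval modules. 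A further difficulty is the transition from ``thick'' to ``localizing'': you would need to verify it in a setting where $\mathcal{B}(X)$ has only countable coproducts, is not compactly generated in Neeman's sense, and where you explicitly say Brown representability and Benson--Iyengar--Krause fail. The paper sidesteps all of this by Lemma \ref{vgene} and Proposition \ref{uv}, which show $\mathcal L = \langle \R_Y^p : p \in \mathrm V_Y^{\mathcal L} \rangle$ and compute $\mathrm V$ from $\mathrm U$, reducing the lattice to a finite combinatorial object without ever invoking a generation-by-compacts principle. Finally, even if an equivalence to (a 2-periodic version of) $D^b(\mathrm{rep}_{\mathbb F_p}(A_n))$ could be arranged, you would still need to verify that the Ingalls--Thomas bijection is compatible with the support datum $\{\mathrm U_Y\}$ used in Theorem \ref{maintt}; the bijection in the paper is not a priori the same map, and reconciling them is additional work your sketch omits.

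The prime-splitting is also handled more concretely in the paper than your Bockstein-style proposal suggests. Lemma \ref{gener} proves $\langle A\rangle = \langle A\otimes\kappa(p) : p\in\Spec\mathbb Z\rangle$ by an explicit homotopy-colimit argument via $\mathbb Q$ and $\mathbb Q/\mathbb Z$ (and the Pr\"ufer groups $\mathbb Z[1/p]/\mathbb Z$), not by a localization functor. This is a point where being concrete matters precisely because of the representability failure you yourself flag.

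In short: your plan is strategically aligned at the top level, but the central $p$-local step, as proposed, relies on an unproved equivalence with $A_n$-representation theory and on a thick-vs-localizing identification that is nontrivial in this non-compactly-generated setting; the paper's direct construction of $\psi$ from supports is both more elementary and what actually closes the argument.
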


In order to better illustrate what this classification says, consider the example where $X$ has only two points. The category of C*-algebras over $X$ is equivalent to the $\mathfrak{KK}$-category of extensions of C*-algebras. Then the theorem classifies all localizing subcategories of the bootstrap class in the $\mathfrak{KK}$-category of extensions of C*-algebras in terms of those triples of subsets of $\Spec \mathbb{Z}$ which have the property that each one is inside the union of the other two. This is not unexpected, since an earlier result by Alexander Bonkat~\cite{AB02} establishes that isomorphism classes of objects in the bootstrap class of C*-algebra extensions correspond to isomorphism classes of 6-periodic exact chain complexes of countable abelian groups.

This example already reveals how our classification is different from the commutative case. Unfortunately, we cannot hope to recover a space from the lattice of localizing subcategories:

\begin{negat}
The lattice of localizing subcategories of the bootstrap category $\mathcal{B}(X)$ is not isomorphic to a sublattice of a subset lattice $\mathcal{P}(S)$ for any set $S$.
\end{negat}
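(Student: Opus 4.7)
The plan is to use distributivity as the obstruction. Every powerset lattice $\mathcal{P}(S)$ is distributive, and distributivity is inherited by sublattices; hence it suffices to show that the lattice $\mathcal{L}(X)$ of localizing subcategories of $\mathcal{B}(X)$ fails to be distributive whenever $n\geq 2$. (For $n=1$ the statement is in fact false, since Dell'Ambrogio's classification gives $\mathcal{L}(X)\cong\mathcal{P}(\Spec\Z)$; the theorem is therefore implicitly about $n\geq 2$.)

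By Theorem \ref{maint}, $\mathcal{L}(X)\cong\prod_{p\in\Spec\Z}\mathrm{NC}(n+1)$, where $\mathrm{NC}(n+1)$ is the lattice of noncrossing partitions of the regular $(n+1)$-gon. Fixing every coordinate but one at an arbitrary value and letting the remaining coordinate vary yields a sublattice isomorphic to $\mathrm{NC}(n+1)$, so it is enough to exhibit a non-distributive sublattice there. I would label the vertices by $1,\dots,n+1$ and single out the noncrossing partitions $\hat 0$ (all singletons),
\[
q=\bigl\{\{1,2,3\}\bigr\}\cup\bigl\{\{k\}:k>3\bigr\},
\]
and the three atoms
\[
p_1=\bigl\{\{1,2\},\{3\}\bigr\}\cup\{\{k\}:k>3\},\quad p_2=\bigl\{\{2,3\},\{1\}\bigr\}\cup\{\{k\}:k>3\},\quad p_3=\bigl\{\{1,3\},\{2\}\bigr\}\cup\{\{k\}:k>3\}.
\]
Each $p_i$ is noncrossing---the only nontrivial case is $p_3$, whose chord $\{1,3\}$ separates only the singleton $\{2\}$, so no crossing pair of blocks can arise. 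Pairwise meets in the ambient partition lattice are $\hat 0$, and all three pairwise joins equal $q$; since $q$ is itself noncrossing, these formulas compute the meets and joins inside $\mathrm{NC}(n+1)$ as well. Therefore $\{\hat 0, p_1, p_2, p_3, q\}$ is a copy of the diamond $M_3$ sitting inside $\mathrm{NC}(n+1)$, and $M_3$ is not distributive.

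The argument is quite short once Theorem \ref{maint} is available; the one subtle point---which I regard as the main thing to watch---is that the join in $\mathrm{NC}(n+1)$ does not in general coincide with the join in the ambient partition lattice. The two agree precisely when the classical join is already noncrossing, and one has to check this holds for each of the three pairs $(p_i,p_j)$ above. An equivalent, coordinate-free packaging of the same obstruction is that for $n+1\geq 3$ the sub-poset of $\mathrm{NC}(n+1)$ consisting of partitions that keep all vertices $k>3$ as singletons is isomorphic to the full partition lattice on three points, which is $M_3$.
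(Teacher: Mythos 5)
Your proof is correct and rests on the same obstruction the paper uses---distributivity fails because the lattice contains a copy of the diamond $M_3$---but the route you take is packaged differently and is worth comparing. The paper's proof (in Section~\ref{Localizing subcategories}) does not invoke Theorem~\ref{maint}, which is only proved afterwards in Section~\ref{Noncorssing partitions}; instead it uses Corollary~\ref{extensions}, the concrete $n=2$ description of localizing subcategories as triples of subsets of $\Spec\Z$ each contained in the union of the other two, and exhibits the $M_3$ directly as the three triples $(\{p\},\{p\},\emptyset)$, $(\emptyset,\{p\},\{p\})$, $(\{p\},\emptyset,\{p\})$ together with $(\emptyset,\emptyset,\emptyset)$ and $(\{p\},\{p\},\{p\})$. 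Your version realizes the same $M_3$ inside one $\mathrm{NC}_{n+1}$ factor of $\prod_{p}\mathrm{NC}_{n+1}$, which is equivalent under the isomorphism of Theorem~\ref{iso-p-loc}. Two points in your favor: your argument works uniformly for all $n\geq 2$ (and even makes the reduction to a single $\mathrm{NC}_{n+1}$ factor explicit), whereas the paper as written only handles $n=2$; and you correctly flag the degenerate case $n=1$, where the claim fails because $\mathrm{NC}_2$ is a two-element chain and the whole lattice is $\mathcal{P}(\Spec\Z)$, a point the paper leaves implicit. One caveat: since you invoke Theorem~\ref{maint}, inserting your proof verbatim at the location of Theorem~\ref{negat} would create a forward reference; this is not circular (the proof of Theorem~\ref{maint} does not use Theorem~\ref{negat}), but it does mean your argument belongs logically after Section~\ref{Noncorssing partitions}. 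Your care about the discrepancy between the join in $\mathrm{NC}_{n+1}$ and the join in the ambient partition lattice, and the verification that they agree here because the $\Pi_{n+1}$-joins are already noncrossing, is exactly right and is the one genuine subtlety in the construction.
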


This already fails for the lattice of noncrossing partitions of the triangle. We will fill in the details for this example later in Section~\ref{Localizing subcategories}.

However, first we start by providing some preliminaries and fixing some notation in Section~\ref{prelim}.

In Section~\ref{Properties of boot}, we prove some general results about localizing subcategories of  $\mathcal{B}(X)$, for arbitrary $X$. Namely, we show that localizing subcategories are closed under tensoring with C*-algebras, and that they are generated by localization of C*-algebras over $X$ at prime numbers and zero. These results are used in Section~\ref{Localizing subcategories} to prove the preliminary classification theorem, which classifies localizing subcategories in terms of certain elements of the $m$-fold cartesian product of power sets of $\mathrm{Spec}\,\mathbb{Z}.$

In Section~\ref{Noncorssing partitions}, we recall the classical definition of the lattice of noncrossing partitions. Then, using the mentioned preliminary theorem, we prove Theorem \ref{maint}.

\subsection*{Acknowledgments} I would like to express my gratitude to my advisor Ralf Meyer for scientific guidance and many fruitful discussions. I would also like to thank Thomas Schick and Ivo Dell'Ambrogio for useful suggestions on my work.

\section{Preliminaries} \label{prelim}

In this section we will recall some known results about the Kasparov category of C*-algebras over a topological space which will be used later on. Unless indicated otherwise, all these results can be found in~\cite{MN12}.

If $X$ is a finite topological space and $T_0$, giving a continuous map $\psi : \Prim(A) \rightarrow X$, which gives to a C*-algebra $A$ a structure of
C*-algebra over $X$, is equivalent to giving a map $$\psi^*:\mathbb{O}(X)\rightarrow \mathbb{I}(A), \qquad U \mapsto \psi^{-1}(U)=:A(U),$$ 
that preserves finite infima and arbitrary suprema. Here $\mathbb{O}(X)$ and $\mathbb{I}(A)$ denote the lattices of open subsets of $X$ and closed *-ideals in $A$, respectively.
In particular, it follows that $A(U) \triangleleft A(V)$ for $ U \subseteq V$ and that $A(\emptyset)=\{0\},\, A(X)=A$. 

A *-homomorphism $f:A \rightarrow B$ between two C*-algebras over $X$ is \emph{$X$-equivariant} if $f(A(U)) \subseteq B(U)$ for every open set $U \subseteq X$.

Since $X$-equivariant maps only use $\mathbb{O}(X)$ in their definition, the category of $X$-C*-algebras is equivalent to the category of $\hat{X}$-C*-algebras, where $\hat{X}$ is the $T_0$ completion of $X$. So for our purposes we might as well assume that $X$ is $T_0$.

Therefore, if $X$ has totally ordered lattice of open subsets, it can be identified with the totally ordered set $\{1,\dots,n\}$ with the \emph{Alexandrov topology}: the topology where a set is open if and only if it is of the form $[a,n]:=\{x \in X \mid a\leq x\} $ for some $a \in X$. 

Even though the constructions and some of the theorems in this section work for more general spaces, from now on we assume that $X$ has totally ordered lattice of open subsets and we identify $X=\{1,\dots,n\}$; then a C*-algebra over $X$ is the same as a C*-algebra with a finite filtration by ideals $$I_n \triangleleft I_{n-1} \triangleleft \cdots  \triangleleft I_2 \triangleleft  I_1 =A,$$ where each $I_i=A([i,n])$ for $[i,n] \in \mathbb{O}(X)$.

A subset $Y\subseteq X$ is called locally closed if $Y=U \setminus V$ for some $U,V \in \mathbb{O}(X)$ with $V \subseteq U$. If $Y$ is non-empty, this exactly corresponds to the intervals in $X=\{1,\dots,n\}$; that is $Y=[a,n]\setminus [b+1,n]=[a,b]:=\{x \in X \mid a\leq x \leq b \}$ for some $1\leq a \leq b\leq n$. We denote the set of all non-empty locally closed sets, or equivalently intervals in $X$, by $\LCX$. For an $X$-C*-algebra $A$ and $[a,b]=[a,n]\setminus [b+1,n]$ as above, define $A([a,b]):=A([a,n])/A([b+1,n])$.

As mentioned in the introduction, $\mathfrak{KK}(X)$ is a triangulated category. The suspension functor is $\Sigma:=\mathrm{C}_0(\mathbb{R},-) $. The triangles come from semi-split C*-algebra extensions over $X$. In other words, every triangle $\Sigma Q \rightarrow I\rightarrow E \rightarrow Q $ in $\mathfrak{KK}$ corresponds to the set of extensions of C*-algebras $I(U) \hookrightarrow E(U) \twoheadrightarrow Q(U),\,U \in \mathbb{O}(X)$, with a completely positive, contractive section $Q \rightarrow E$ which restricts to sections $Q(U) \rightarrow E(U)$ for all $U \in \mathbb{O}(X)$ \cite{MN09}.

For every $Y \in \LCX$, we define a functor 
$$\mathrm{FK}_Y: \mathfrak{KK}(X) \rightarrow \mathfrak{Ab}^{\mathbb{Z}/2}, \qquad \mathrm{FK}_Y(A):=\K_*(A(Y)).$$
Here $\mathfrak{Ab}^{\mathbb{Z}/2}$ denotes the category of $\mathbb{Z}/2$-graded abelian groups. Meyer and Nest combine the functors $\mathrm{FK}_Y$ for all $Y \in \LCX$ into a single \emph{filtrated $\K$-theory} functor. The latter, however, also includes its target category, which we recall below.

Let  $\mathcal{NT}$ be the small, $\mathbb{Z}/2$-graded, pre-additive category with object set $\LCX$, and the $\mathbb{Z}/2$-graded abelian group of natural transformations $\mathrm{FK}_Y \Rightarrow \mathrm{FK}_Z$ as arrows $Y \rightarrow Z$.

Let $\mathfrak{Mod}(\mathcal{NT})$ be the abelian category of grading preserving, additive functors $\mathcal{NT} \rightarrow \mathfrak{Ab}^{\mathbb{Z}/2}$. These functors are usually called modules, hence the notation. Then filtrated K-theory is the functor $$\mathrm{FK}=(\mathrm{FK}_Y)_{Y \in \LCX}:\mathfrak{KK}(X) \rightarrow \mathfrak{Mod}(\mathcal{NT})_c, \qquad A \mapsto \Big(\K_*(A(Y))\Big)_{Y \in \LCX}.$$ Here $\mathfrak{Mod}(\mathcal{NT})_c$ denotes the full subcategory of countable modules in $\mathfrak{Mod}(\mathcal{NT})$.

Let $\mathrm{Hom}_{\mathcal{NT}}$ and $\mathrm{Ext}^1_{\mathcal{NT}}$ denote the morphism and extension groups in the abelian category $\mathfrak{Mod}(\mathcal{NT})_c$, respectively. Then the following universal coefficient theorem holds:

\begin{theorem}[Meyer-Nest \cite{MN12}]
For any $A \in \mathcal{B}(X)$ and $B \in \mathfrak{KK}(X)$, there are natural short exact sequences
$$\mathrm{Ext}^1_{\mathcal{NT}}\big(\mathrm{FK}(A)[j+1],\mathrm{FK}(B)\big)\hookrightarrow \mathrm{KK}_{j}(X;A,B) \twoheadrightarrow \mathrm{Hom}_{\mathcal{NT}}\big(\mathrm{FK}(A)[j],\mathrm{FK}(B)\big)$$ for $j \in \mathbb{Z}/2$, where $[j]$ and $[j+1]$ denote degree shifts.
\end{theorem}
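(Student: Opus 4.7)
The plan is to prove the UCT by constructing, for every $A \in \mathcal{B}(X)$, a distinguished triangle in $\mathcal{B}(X)$ whose image under $\mathrm{FK}$ is a length-one free resolution of $\mathrm{FK}(A)$, and then reading off the UCT sequences from the associated long exact sequence. The first ingredient is a family of co-representing objects $\{R_Y\}_{Y \in \LCX} \subset \mathcal{B}(X)$: each $R_Y$ is to be built by pushing the one-point C*-algebra $\mathbb{C}$ forward along the locally closed embedding $Y \hookrightarrow X$ (taking mapping cones of the obvious structural maps when $Y$ is a proper interval), and is designed so that $\mathrm{FK}_Y(B) \cong \mathrm{KK}_*(X; R_Y, B)$ naturally in $B$. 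The Yoneda lemma then identifies $\mathcal{NT}(Y, Z) \cong \mathrm{KK}_*(X; R_Z, R_Y)$ and gives, for every $B$,
$$\mathrm{KK}_j(X; R_Y, B) \;\cong\; \mathrm{FK}_Y(B)[j] \;\cong\; \mathrm{Hom}_{\mathcal{NT}}\!\big(P_Y[j],\,\mathrm{FK}(B)\big),$$
where $P_Y = \mathcal{NT}(Y, -)$ is the free $\mathcal{NT}$-module at $Y$. In particular, the UCT holds, with a vanishing $\mathrm{Ext}^1$-term, on arbitrary countable direct sums of shifted $R_Y$'s.

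The key homological input is that $\mathfrak{Mod}(\mathcal{NT})_c$ has global projective dimension at most one: every countable $\mathcal{NT}$-module $M$ admits a resolution $0 \to F_1 \to F_0 \to M \to 0$ by countable free modules. For the totally ordered $X = \{1,\dots,n\}$ this should reduce to an explicit combinatorial statement about $\mathcal{NT}$, whose morphisms are generated by the connecting maps of the six-term sequences attached to the ideals in the filtration. Granted this, I pick such a resolution of $\mathrm{FK}(A)$, realize $F_i = \mathrm{FK}(R_i)$ via the corepresenting objects, lift the map $F_1 \hookrightarrow F_0$ through the adjunction to a morphism $f: R_1 \to R_0$ in $\mathcal{B}(X)$, and complete $f$ to an exact triangle $R_1 \to R_0 \to C_f \to \Sigma R_1$. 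The long exact sequence of $\mathrm{FK}$ then forces $\mathrm{FK}(C_f) \cong \mathrm{FK}(A)$; a comparison argument, using that the UCT is already known for $R_0$ and $R_1$, promotes this to an isomorphism $A \cong C_f$ in $\mathcal{B}(X)$. Applying $\mathrm{KK}_*(X; -, B)$ to this triangle and substituting the Yoneda adjunction yields a six-term exact sequence, which, because the outer terms carry a $\mathrm{Hom}_{\mathcal{NT}}$-description via the resolution, splits into the two short exact UCT sequences claimed.

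The main obstacle is the projective-dimension statement. It requires a complete and explicit description of $\mathcal{NT}$ — one must identify \emph{every} natural transformation $\mathrm{FK}_Y \Rightarrow \mathrm{FK}_Z$, together with their composition law — after which one has to verify that the resulting ``ring with many objects'' is hereditary on countable modules. The corepresentability input and the triangle-comparison step are then essentially formal, following the by-now-standard UCT template originating with Rosenberg--Schochet.
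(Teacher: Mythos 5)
The paper does not prove this statement; it is quoted from Meyer and Nest \cite{MN12} as a preliminary, and the ingredients it presupposes (the representing objects $\R_Y$ and the exactness/projective-dimension criterion) are only \emph{recalled}, as Theorems~\ref{repres} and~\ref{proj-res}. Your sketch does follow the Rosenberg--Schochet template that \cite{MN12} uses: corepresenting objects, Yoneda, a length-one free resolution of $\mathrm{FK}(A)$, a lift of it to a distinguished triangle, and a comparison argument. Two discrepancies need flagging. The $\R_Y$ in \cite{MN12} are not obtained by ``pushing $\mathbb{C}$ forward along $Y\hookrightarrow X$''; they are the restrictions to $Y^{\mathrm{op}}\times X$ of $\R=\mathrm{C}(\Ch(X))$ viewed over $X^{\mathrm{op}}\times X$, a geometric construction that reduces $\mathcal{NT}_*(Y,Z)$ to a computation of topological $\mathrm{K}$-theory of polyhedra. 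In particular, the naive ``$\mathbb C$ supported at the point $a$'' is $\R_{[a,n]}$, \emph{not} $\R_{[a,a]}$; the cone step you propose for proper intervals is the right idea, but the pushforward description as stated would produce the wrong objects.

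The serious gap is your ``key homological input'': the claim that $\mathfrak{Mod}(\mathcal{NT})_c$ has global projective dimension at most one (is hereditary) is false. Theorem~\ref{proj-res} asserts that a countable $\mathcal{NT}$-module admits a length-one projective resolution \emph{if and only if} it is exact, equivalently if and only if it is in the image of $\mathrm{FK}$, and there do exist non-exact countable $\mathcal{NT}$-modules. Already for $X=\{1,2\}$, the module $M$ with $M([1,2])=\mathbb{Z}$ and $M([1,1])=M([2,2])=0$ is a well-defined $\mathcal{NT}$-module (every nonidentity arrow into or out of $[1,2]$ must act as zero for degree reasons, and the required composition relations hold trivially because the relevant $\mathcal{NT}$-groups vanish), but $0\to\mathbb{Z}\to 0$ is not exact, so $M$ has projective dimension at least two. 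Thus $\mathcal{NT}$ is not hereditary. Since your argument applies the length-one resolution only to $M=\mathrm{FK}(A)$, which \emph{is} exact, the proof is repairable; but the statement you must actually establish is the implication ``exact $\Rightarrow$ projective dimension $\leq 1$'', which is where the genuine combinatorial work of \cite{MN12} lies and where the linearity of the lattice of open sets of $X$ enters essentially.
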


\begin{cor}[Meyer-Nest \cite{MN12}] \label{uct-cor}
Let $M \in \mathfrak{Mod}(\mathcal{NT})_c$ have a projective resolution of length $1$. Then there is $A \in \mathcal{B}(X)$ with $\mathrm{FK}(A) \cong M$, and this object is unique up to isomorphism in $\mathcal{B}(X)$.
\end{cor}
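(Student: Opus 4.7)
The plan is to realize $M$ by lifting its projective resolution through the UCT, and to prove uniqueness via a mapping-cone argument. The first task is to show that every countable projective $\mathcal{NT}$-module is of the form $\mathrm{FK}(P)$ for some $P \in \mathcal{B}(X)$. The generators of $\mathcal{B}(X)$ (the various ways to make $\mathbb{C}$ into an $X$-C*-algebra) should map under $\mathrm{FK}$ to the representable projective modules $\mathcal{NT}(Y,-)$ for $Y \in \LCX$; a countable free module is then realized as a countable coproduct of these generators, which still lies in the localizing subcategory $\mathcal{B}(X)$. A countable projective appears as a direct summand, obtained using Karoubian closure of $\mathcal{B}(X)$ together with the $\mathrm{Hom}_{\mathcal{NT}}$-surjection of the UCT to lift the splitting idempotent from $\mathfrak{Mod}(\mathcal{NT})_c$ up to $\mathfrak{KK}(X)$.

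Given the projective resolution $0 \to P_1 \xrightarrow{\varphi} P_0 \to M \to 0$, pick realizations $P_i \cong \mathrm{FK}(A_i)$ with $A_i \in \mathcal{B}(X)$, and apply the $\mathrm{Hom}_{\mathcal{NT}}$-surjection of the UCT with $j = 0$ to lift $\varphi$ to a morphism $f \in \mathrm{KK}_0(X; A_1, A_0)$ with $\mathrm{FK}(f)=\varphi$. Embed $f$ in an exact triangle $A_1 \xrightarrow{f} A_0 \to A \to \Sigma A_1$; since $\mathcal{B}(X)$ is triangulated, $A \in \mathcal{B}(X)$. Under the homological functor $\mathrm{FK}$ the resulting long exact sequence collapses, because $\varphi$ is injective, to the short exact sequence $0 \to P_1 \xrightarrow{\varphi} P_0 \to \mathrm{FK}(A) \to 0$, identifying $\mathrm{FK}(A) \cong M$.

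For uniqueness, given another $A' \in \mathcal{B}(X)$ with $\mathrm{FK}(A') \cong M$, lift the isomorphism $\mathrm{FK}(A) \to \mathrm{FK}(A')$ via the UCT surjection to some $\alpha \in \mathrm{KK}_0(X; A, A')$. The mapping cone $C(\alpha) \in \mathcal{B}(X)$ has $\mathrm{FK}(C(\alpha)) = 0$ by the long exact sequence, since $\mathrm{FK}(\alpha)$ is an iso. Applying the UCT to the pair $(C(\alpha), C(\alpha))$, both $\mathrm{Hom}_{\mathcal{NT}}$ and $\mathrm{Ext}^1_{\mathcal{NT}}$ terms vanish, so $\mathrm{KK}_0(X; C(\alpha), C(\alpha)) = 0$; hence $\mathrm{id}_{C(\alpha)} = 0$ and $C(\alpha) \cong 0$, so $\alpha$ is an isomorphism in $\mathfrak{KK}(X)$ and thus in $\mathcal{B}(X)$. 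The main obstacle I expect is the very first step: identifying and realizing countable projective $\mathcal{NT}$-modules by objects of $\mathcal{B}(X)$. The lifting, mapping-cone, and uniqueness steps are essentially formal once the UCT is in place, but the realizability of projectives — together with the availability of countable Karoubi splittings inside $\mathcal{B}(X)$ — is the substantive ingredient that makes the corollary go through.
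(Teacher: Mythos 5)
The paper itself does not prove this statement: it is quoted as a corollary of the universal coefficient theorem and attributed to Meyer--Nest \cite{MN12}, so there is no in-text proof to compare yours against. That said, your strategy — realize a length-one projective resolution inside $\mathcal{B}(X)$, lift its differential through the $\mathrm{Hom}_{\mathcal{NT}}$-surjection of the UCT, form the cone to get $A$, and then prove uniqueness by killing the mapping cone of a lifted isomorphism with the UCT — is exactly the standard derivation, and your existence and uniqueness steps are correct.

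One point does need repair, and it is precisely the step you yourself flagged as substantive. When you lift the splitting idempotent $e\colon F\to F$ of a countable free module $F\cong\mathrm{FK}(P)$ (with $P$ a countable coproduct of suspensions of the $\R_Y$) through the $\mathrm{Hom}_{\mathcal{NT}}$-surjection, the lift $\tilde e\in\mathrm{KK}_0(X;P,P)$ is \emph{a priori} only idempotent modulo the $\mathrm{Ext}^1$-ideal, and idempotent splitting in $\mathcal{B}(X)$ needs an honest idempotent. The fix is that here the $\mathrm{Ext}^1$-term vanishes, since $\mathrm{FK}(P)$ is projective, so the UCT map $\mathrm{KK}_0(X;P,P)\to\mathrm{End}_{\mathcal{NT}}(\mathrm{FK}(P))$ is a ring isomorphism and the lift is genuinely idempotent. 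Alternatively (and more in the spirit of the paper), you can avoid idempotents entirely: a countable projective $\mathcal{NT}$-module is a direct summand of a countable free module, so each $M(Y)$ is free abelian and $M$ is exact, whence Theorem~\ref{splitt} already gives that $M$ is \emph{free}; then $M$ is realized directly by a countable coproduct of shifts of the $\R_Y$. Finally, a small slip of exposition: the generators of $\mathcal{B}(X)$ quoted in the definition (the ways of making $\mathbb{C}$ into an $X$-C*-algebra) only realize the representables $Q_{\{x\}}$; the objects that realize $Q_Y=\mathcal{NT}_*(Y,-)$ for general $Y\in\LCX$ are the $\R_Y$ of Theorem~\ref{repres}, and these lie in $\mathcal{B}(X)$.
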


In order to describe generators for the localizing subcategories in $\mathcal{B}(X)$, we will extensively use the special $X$-C*-algebras $\R_Y \in \mathcal{B}(X)$ for $ Y \in \LCX$. These objects are characterized by the following representability theorem:

\begin{theorem}[Meyer-Nest \cite{MN12}] \label{repres}
The covariant functors $\mathrm{FK}_Y$ for $Y \in \LCX$ are representable, that is, there are objects $\R_Y \in \mathfrak{KK}(X)$ and natural isomorphisms $$\mathrm{KK}_*(X;\R_Y,A) \cong \mathrm{FK}_Y(A)=\K_*\big(A(Y)\big)$$ for all $A \in \mathfrak{KK}(X),\, Y \in \LCX$.
\end{theorem}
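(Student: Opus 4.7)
\emph{Proof proposal.} The plan is to build $\R_Y$ in two stages: first construct $\R_Y$ for open $Y$ directly, then obtain $\R_Y$ for general locally closed $Y$ as a mapping cone in the triangulated structure of $\mathfrak{KK}(X)$.

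For $Y=[a,n]$ open, let $\mathbb{C}_a$ denote $\mathbb{C}$ equipped with the $X$-algebra structure $\mathbb{C}_a([c,n]):=\mathbb{C}$ when $c\leq a$ and $\mathbb{C}_a([c,n]):=0$ otherwise, and set $\R_{[a,n]}:=\mathbb{C}_a$. Any $X$-equivariant $*$-homomorphism $\mathbb{C}_a\to A$ is forced by $X$-equivariance to send $\mathbb{C}=\mathbb{C}_a([a,n])$ into the ideal $A([a,n])$, so such maps are in bijection with $*$-homomorphisms $\mathbb{C}\to A([a,n])$. The same constraint on the left action of $\mathbb{C}_a$ in any $X$-equivariant Kasparov cycle yields a natural isomorphism $\mathrm{KK}_*(X;\mathbb{C}_a,A)\cong\mathrm{KK}_*(\mathbb{C},A([a,n]))=\K_*(A([a,n]))$ via Kasparov's identification of $\K$-theory with $\mathrm{KK}_*(\mathbb{C},-)$.

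For a general $Y=[a,b]$, set $\mathbb{C}_{n+1}:=0$ and observe that the identity on the underlying $\mathbb{C}$ is $X$-equivariant as a $*$-homomorphism $\mathbb{C}_a\to\mathbb{C}_{b+1}$, since $a\leq b<b+1$ gives $\mathbb{C}_a([c,n])\subseteq\mathbb{C}_{b+1}([c,n])$ for every $c$. Define $\R_{[a,b]}$ as the fiber, completing this morphism to an exact triangle
$$\R_{[a,b]}\longrightarrow\mathbb{C}_a\longrightarrow\mathbb{C}_{b+1}\longrightarrow\Sigma\R_{[a,b]}$$
in $\mathfrak{KK}(X)$. Applying $\mathrm{KK}_*(X;-,A)$ produces a long exact sequence in which, by the open case, $\mathrm{KK}_*(X;\R_{[a,b]},A)$ is sandwiched between the $\K$-theory groups of $A([a,n])$ and $A([b+1,n])$. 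Comparing with the six-term exact sequence in $\K$-theory associated to the C*-algebra extension
$$A([b+1,n])\hookrightarrow A([a,n])\twoheadrightarrow A([a,b])$$
and invoking the five lemma gives the desired natural isomorphism $\mathrm{KK}_*(X;\R_{[a,b]},A)\cong\K_*(A([a,b]))=\mathrm{FK}_Y(A)$.

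The main obstacle is to verify that the two long exact sequences agree on their common arrows, namely that precomposition with $\mathrm{id}_{\mathbb{C}}:\mathbb{C}_a\to\mathbb{C}_{b+1}$ corresponds, under the isomorphisms of the open case, precisely to the map $\K_*(A([b+1,n]))\to\K_*(A([a,n]))$ induced by the ideal inclusion. Tracing through the open case, a class in $\mathrm{KK}_*(X;\mathbb{C}_{b+1},A)$ is represented by an $X$-equivariant Kasparov cycle whose left action factors through $A([b+1,n])$; precomposition with $\mathrm{id}_{\mathbb{C}}$ leaves the cycle unchanged, and it is then reinterpreted through the inclusion $A([b+1,n])\hookrightarrow A([a,n])$. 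This identification made, naturality in $A$ (and in $Y$) follows automatically from the naturality of each ingredient.
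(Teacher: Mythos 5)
Your argument is correct but takes a genuinely different route from the one the paper recalls from Meyer--Nest: the paper builds $\R_Y$ as the restriction to $Y^{\mathrm{op}}\times X$ of the commutative C*-algebra $\R=\mathrm{C}\big(\Ch(X)\big)$ over a simplicial complex and verifies representability via topological $\K$-theory of the resulting subspaces, whereas you bypass the geometric model entirely, first representing $A\mapsto\K_*\big(A([a,n])\big)$ for open $[a,n]$ by the atomic object $\mathbb{C}_a$ (exploiting the constraint that $X$-equivariance places on the left $\mathbb{C}$-action in a Kasparov cycle) and then obtaining $\R_{[a,b]}$ as a fiber of $\mathbb{C}_a\to\mathbb{C}_{b+1}$, closing with a five-lemma comparison against the six-term sequence of $A([b+1,n])\hookrightarrow A([a,n])\twoheadrightarrow A([a,b])$. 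Your version is shorter and more self-contained as a proof of representability; the trade-off is that the explicit commutative model is actually exploited later in the paper to compute $\K_*\big(\R_Y(Z)\big)\cong\mathcal{NT}_*(Y,Z)$ and thus the ``maximal box'' conditions \eqref{conditions}, which your mapping-cone $\R_Y$, defined only up to isomorphism in $\mathfrak{KK}(X)$, does not yield without additional work. Two points worth sharpening: to get the claimed \emph{natural} isomorphism rather than one for each fixed $A$, you should first exhibit a morphism of long exact sequences natural in $A$ and only then invoke the five lemma termwise, which your last paragraph gestures at but does not carry out; and the identification $\mathrm{KK}_*(X;\mathbb{C}_a,A)\cong\mathrm{KK}_*(\mathbb{C},A([a,n]))$ silently uses that every $\mathrm{KK}$-class admits a representative whose left $\mathbb{C}$-action is essential, so that the underlying Hilbert $A$-module is honestly a Hilbert $A([a,n])$-module.
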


We also recall the explicit description of the objects  $\R_Y$ for $ Y \in \LCX$. Let $\Ch(X)$ be a closed simplex of dimension $n-1$, whose $m$-simplices are the chains $x_1 \leq x_2 \leq \dots \leq x_m$, for $m \leq n-1$ and whose face and degeneracy maps delete or double an entry in the chain. Let $\Delta_{[a,b]}$ denote the $b-a$-dimensional face of $\Ch(X)$ corresponding to $[a,b]$. Let $\Delta_{[a,b]}^{\mathrm{o}} := \Delta_{[a,b]} \setminus \partial \Delta_{[a,b]}$ be the open simplex. Moreover, let $X^{\mathrm{op}}$ be $X$ with the topology of the reversed partial order $\geq$. Then~\cite[Proposition~2.8]{MN12} gives a continuous map 
$$(m,M): \Ch(X) \rightarrow X^{\mathrm{op}} \times X,$$
where for $x \in \Delta_{[a,b]}^{\mathrm{o}}$, we define $m(x)=a$ and $M(x)=b$. 

Let $\R:=\mathrm{C}\big(\Ch(X)\big)$, the C*-algebra of continuous functions on $\Ch(X)$. Since $$ \Prim\R=\Prim\mathrm{C}\big(\Ch(X)\big) \cong \Ch(X),$$ the map $(m,M)$ turns $\R$ into a C*-algebra over $X^{\mathrm{op}} \times X.$

Define $\R_Y$ to be the restriction of $\R$ to $Y^{\mathrm{op}}\times X$, viewed as an $X$-C*-algebra via the coordinate projection  $Y^{\mathrm{op}}\times X \rightarrow X$, where $Y^{\mathrm{op}}$ is $Y$ with the subspace topology coming from $X^{\mathrm{op}}.$ In other words, $$\R_Y(Z):=\R(Y^{\mathrm{op}}\times Z)=\mathrm{C}_0\big(m^{-1}(Y) \cap M^{-1}(Z)\big).$$

Theorem \ref{repres} together with the Yoneda Lemma gives 
\begin{align*}
\mathcal{NT}_*(Y,Z) \cong \mathrm{KK}_*(X;\R_Z,\R_Y) \cong \mathrm{FK}_Z(\R_Y)=  \K_*\big(\R_Y(Z)\big) & =\K_*\big(\R(Y^{\mathrm{op}}\times Z)\big) \\ 
=\K^*\big(m^{-1}(Y) \cap M^{-1}(Z)\big).
\end{align*}
Therefore, computing $\K_*\big(\R_Y(Z)\big) \cong \mathcal{NT}_*(Y,Z)$ comes down to computing the topological $\K$-theory of some simplices. After doing so for $Y=[a,b]$ and $Z=[c,d]$, one gets \cite[Section~3.1]{MN12}
\begin{equation} \label{conditions}
    \K_*\big(\R_{[a,b]}([c,d])\big)\cong 
\begin{cases}
    \mathbb{Z}[0]& \text{if } c \leq a \leq d \leq b,\\
    \mathbb{Z}[1]& \text{if } a< c,\, b<d \text{ and } c-1 \leq b,\\            
    0  & \text{otherwise.}
\end{cases}
\end{equation}

By the classical Universal Coefficient Theorem, the $\K$-theory functor is a complete invariant for the bootstrap class $\mathcal{B}$. So since $\K_*(\mathbb{C}) \cong \mathbb{Z}[0]$ and $\K_*(\mathbb{C}[1])=\K_*(\mathrm{C}_0(\mathbb{R})) \cong \mathbb{Z}[1]$, we have 

\begin{equation} \label{cconditions}
   \R_{[a,b]}([c,d])\cong 
\begin{cases}
    \mathbb{C}& \text{if } c \leq a \leq d \leq b,\\
    \mathrm{C}_0(\mathbb{R})& \text{if } a< c,\, b<d \text{ and } c-1 \leq b,\\            
    0  & \text{otherwise.}
\end{cases}
\end{equation}
We will also frequently use the localized version of these representative elements. Recall that for $p \in \Spec\mathbb{Z}$, we put $\mathbb{F}_p=\Z/p$ for $p\neq 0$, and $\mathbb{F}_p=\mathbb Q$ for $p=0$.
\begin{definition}
For $p \in \Spec\mathbb{Z}$, let $\R_Y^p:=\R_Y\otimes \kappa(p)$, where $Y \in \LCX$ and $\kappa(p)$ is the unique $\mathrm{C}^*$-algebra in $\mathcal{B}$ with $\K_*(\kappa(p))\cong \mathbb{F}_p[0].$
\end{definition}

\begin{remark} \label{local-gener}
Since $\K_*(\R_Y)$ is torsion-free, the K\"{u}nneth formula gives $$\K_*(\R_Y^p) = \K_*(\R_Y\otimes \kappa(p)) \cong \K_*(\R_Y)\otimes_{\mathbb{Z}}\K_*(\kappa(p)).$$ So we get the same conditions as (\ref{conditions}) and (\ref{cconditions}) for $\R_{[a,b]}^p([c,d])$, but with $\mathbb{Z}[i]$ replaced by $\mathbb{F}_p[i]$ and $\mathbb{C}[i]$ by $\kappa(p)[i]$ for $i =0,1$.
\end{remark}

\begin{remark}
The conditions (\ref{conditions}) give a handy way to diagrammatically depict the filtrated $\K$-theory functor (see Figure \ref{fig:1}). This diagram represents the category $\mathcal{NT}$; it is understood to be filled with zeros outside the infinite ``strip". Every arrow $Y \rightarrow Z$ for $Y,Z \in \LCX$ represents the generator of the free group $\mathcal{NT}_*(Y,Z)$. Dashed arrows represent degree one maps. Then, Figure \ref{fig:1} illustrates how $\mathcal{NT}$ is represented by what we will call an \emph{invariant triangle diagram}, which maps to the flipped version of itself via degree shifting maps. Also, since $\K_*\big(\R_Y(Z)\big) \cong \mathcal{NT}_*(Y,Z)$, the $X$-$\mathrm{C}^*$-algebra $\R_Y$ is represented in this diagram by a \emph{``maximal box"} starting at $Y$, a subdiagram of all $Z \in \LCX$ to which the morphisms from $Y$ do not factor through zero. In other words, all $Z=[c,d]$ that satisfy the conditions (\ref{conditions}) are inside the maximal box starting at $Y=[a,b]$; we denote the set of all such~$Z$ by $B_Y$; so $B_Y:=\{Z \in \LCX \mid \K_*(\R_Y(Z)) \ncong 0 \}.$ By Remark \ref{local-gener}, the situation is analogous for $\R_Y^p, \, p \in \mathrm{Spec}\,\mathbb{Z}$.
\end{remark}

\begin{figure}
\begin{tikzpicture}[baseline= (a).base]
\node[scale=0.75]  (a) at (-0.1,0){

{\scalefont{0.7}
\begin{tikzcd}[column sep={0.88cm,between origins},row sep={0.88cm,between origins}]
\draw[overlay,densely dotted, red] (1.5,-5.7) -- (7.9,0.69) -- (14.4, -5.7) -- cycle;
\draw[overlay,densely dotted, red] (12, 0.6) -- (8.6,0.6) -- (15, -5.75) -- (16.3,-4.7);
\draw[overlay,densely dotted, red] (-0.3,-4.6)-- (0.9,-5.75) -- (7.3, 0.6) -- (3.9,0.6);
\draw[overlay,densely dashed, rounded corners,blue] (5.3,-3.45) -- (9.75,0.85) -- (12.25,-1.65) -- (7.9, -5.95)--cycle;

&&&&&... \arrow[dr]&&{[1,1]}\arrow[dashed,dr]&& {[1,n]} \arrow[dr]  && {[n,n]} \arrow[dr] && ... && \\
&&&&... \arrow[dr]&&{[1,2]}\arrow[ur] \arrow[dashed,dr]&& {[2,n]}\arrow[ur]\arrow[dr] &&{[1,n{-}1]}\arrow[dashed,ur] \arrow[dr] && {[n{-}1,n]}\arrow[ur]\arrow[dr] && ... &&\\
&&&... \arrow[dr]&&...\arrow[ur]\arrow[dashed,dr]&& {[3,n]} \arrow[ur]\arrow[dr]&&{[2,n{-}1]}\arrow[ur]\arrow[dr] && {[1,n{-}2]}\arrow[dashed,ur] \arrow[dr] && {[n{-}2,n]}\arrow[dr]\arrow[ur] && ... \\
&&... \arrow[dr]&& {[1,n{-}3]}\arrow[ur] \arrow[dashed,dr]&& ... \arrow[ur]\arrow[dr] &&...\arrow[ur]\arrow[dr] && ... \arrow[ur]\arrow[dr]&&\arrow[dr] ...\arrow[dashed,ur]  && ...\arrow[dr]\arrow[ur]&& ...\\
&... \arrow[dr] && {[1,n{-}2]}\arrow[ur]\arrow[dashed,dr] && {[n{-}2,n]} \arrow[ur]\arrow[dr]&&{[n{-}3,n{-}1]} \arrow[ur]\arrow[dr]&& {[n{-}4,n{-}2]}\arrow[ur]\arrow[dr] && ...\arrow[ur]\arrow[dr] &&{[1,3]}\arrow[dr] \arrow[dashed,ur] &&{[3,n]}\arrow[ur] \arrow[dr]&&...\\
... \arrow[dr]&&{[1,n{-}1]}\arrow[ur]\arrow[dashed,dr]&& {[n{-}1,n]}\arrow[ur] \arrow[dr] &&{[n{-}2,n{-}1]}\arrow[ur]\arrow[dr] && {[n{-}3,n{-}2]}\arrow[ur] \arrow[dr]&&...\arrow[ur]\arrow[dr] &&{[2,3]}\arrow[dr] \arrow[ur]&& {[1,2]} \arrow[dashed,ur] \arrow[dr]&&{[2,n]}\arrow[ur]\arrow[dr]&& ...\\
&{[1,n]}\arrow[ur]&& {[n,n]} \arrow[ur] &&{[n{-}1,n{-}1]}\arrow[ur] && {[n{-}2,n{-}2]}\arrow[ur] && ...\arrow[ur] && {[3,3]}\arrow[ur]&&{[2,2]} \arrow[ur] &&{[1,1]} \arrow[dashed,ur] &&{[1,n]}\arrow[ur] &&
\end{tikzcd}
}
};
\end{tikzpicture}
\caption{The invariant triangle is marked with dotted lines. The dashed square represents the box $B_Y$ for $Y=[n-3,n-1]$.} \label{fig:1}

\end{figure}
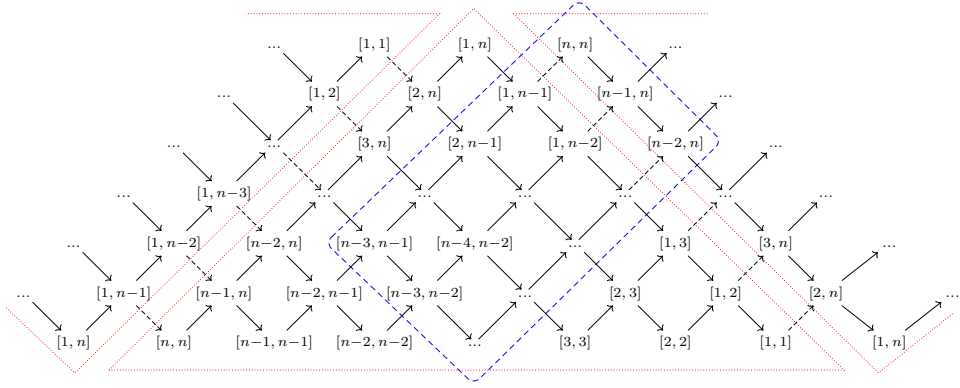

For classification purposes, we will use the characterization of $\mathcal{NT}$-modules in the image of filtrated $\mathrm{K}$-theory functor. These modules should have exactness properties coming from 6-term exact sequences of $\K$-theory. So we call an $\mathcal{NT}$-module $M$ exact if the chain complexes $$\cdots \longrightarrow M(U)\longrightarrow M(Y) \longrightarrow M(Y\setminus U) \longrightarrow M(U) \longrightarrow \cdots $$ are \emph{exact} for all $Y \in \LCX,\, U \in \mathbb{O}(Y)$ with maps coming from the generators in \eqref{conditions}.

Meyer-Nest show that exact modules also behave nicely homologically:

\begin{theorem}[Meyer-Nest \cite{MN12}] \label{proj-res}
Let $M \in \mathfrak{Mod}(\mathcal{NT})_{\mathrm{c}}$. Then $M=\mathrm{FK}(A)$ for some $A \in \mathfrak{KK}(X)$ if and only if $M$ is exact and if and only if $M$ has a projective resolution of length $1$ in $\mathfrak{Mod}(\mathcal{NT})_{\mathrm{c}}$.
\end{theorem}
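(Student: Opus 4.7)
The plan is to prove the three-way equivalence cyclically: (a) $M = \mathrm{FK}(A)$ for some $A \in \mathfrak{KK}(X)$ $\Rightarrow$ (b) $M$ is exact $\Rightarrow$ (c) $M$ admits a length-one projective resolution $\Rightarrow$ (a). The implication (a)$\Rightarrow$(b) is the quickest: every $A \in \mathfrak{KK}(X)$ provides, for each $Y \in \LCX$ and each $U \in \mathbb{O}(Y)$, a semi-split $X$-equivariant extension $A(U) \hookrightarrow A(Y) \twoheadrightarrow A(Y \setminus U)$, and the associated cyclic six-term sequence in $\K$-theory is natural in $A$. Its boundary maps are therefore natural transformations, and comparing them with the list of generators recorded in~\eqref{conditions} shows that the chain complexes featured in the definition of exactness are exactly these six-term sequences. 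The implication (c)$\Rightarrow$(a) is Corollary~\ref{uct-cor} verbatim.

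The substantive content is thus (b)$\Rightarrow$(c). First, one identifies an ample supply of projectives: by the Yoneda lemma, for each $Y \in \LCX$ the representable module $P_Y := \mathcal{NT}_*(Y,-)$ is projective in $\mathfrak{Mod}(\mathcal{NT})_c$, and Theorem~\ref{repres} identifies $P_Y \cong \mathrm{FK}(\R_Y)$ together with $\mathrm{Hom}_{\mathcal{NT}}(P_Y, N) \cong N(Y)$. Given $M$, picking generators of each abelian group $M(Y)$ assembles into a surjection $p \colon P \twoheadrightarrow M$, where $P$ is a direct sum of degree-shifted copies of the $P_Y$'s. The task then reduces to proving that the kernel $K := \ker p$ is again projective, i.e., that the full subcategory of exact modules is \emph{hereditary} inside $\mathfrak{Mod}(\mathcal{NT})_c$.

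Establishing this hereditariness is the main obstacle. I would attack it by exploiting the combinatorial structure of the invariant triangle of Figure~\ref{fig:1}. The explicit box description~\eqref{conditions} shows that each $P_Y$ is supported on the maximal box $B_Y$, and the relations between generators attached to different vertices are controlled by the short list of edges (degree-zero and degree-one) of the triangle. Concretely, I would argue by induction on $n = |X|$: the semi-split extension associated with the innermost ideal $I_n$ splits off a $P_{[n,n]}$-free summand from $M$, after which the quotient can be viewed as a module over the analogous category attached to $X' = \{1,\dots,n-1\}$ and handled by the inductive hypothesis. An alternative would be a direct boxwise analysis: starting from the free cover $p$ constructed above, exactness of $M$ forces the kernel at each vertex $Y$ to be generated by the images of the degree-one boundary maps emanating from the boundary of $B_Y$, and a case-by-case verification using~\eqref{conditions} identifies $K$ as a direct sum of shifted $P_Y$'s. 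Either route reflects the same underlying phenomenon, namely that the category $\mathcal{NT}$ is effectively one-dimensional: on exact modules its global projective dimension is exactly one.
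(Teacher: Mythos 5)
The paper does not prove this theorem; it is cited verbatim from Meyer--Nest~\cite{MN12}, so there is no ``paper's own proof'' to compare against. What you have written must therefore be judged on whether it would stand on its own, and as it stands it does not.

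Two of your three arrows are fine. For (a)$\Rightarrow$(b), the six-term $\K$-theory sequences attached to the semi-split extensions $A(U)\hookrightarrow A(Y)\twoheadrightarrow A(Y\setminus U)$ are natural in $A$ and their boundary maps are precisely the generators of $\mathcal{NT}$ recorded in \eqref{conditions}, so $\mathrm{FK}(A)$ is exact. For (c)$\Rightarrow$(a), Corollary~\ref{uct-cor} applies directly. Your construction of a free cover $p\colon P\twoheadrightarrow M$ via the Yoneda identification $\mathrm{Hom}_{\mathcal{NT}}(P_Y,M)\cong M(Y)$ is also correct, and you could (though you do not) extract from the long exact sequence of the three six-term complexes for $K$, $P$, $M$ that $K=\ker p$ is again exact, with each $K(Y)$ free as a subgroup of the free abelian group $P(Y)$.

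The genuine gap is the remaining step: from ``$K$ exact and $K(Y)$ free for all $Y$'' to ``$K$ projective.'' That implication is precisely the content of Theorem~\ref{splitt}, but the paper itself explains, in the remark immediately after that theorem, that its proof \emph{uses} the existence of length-one projective resolutions for exact modules --- i.e.\ the very statement you are trying to prove --- so invoking Theorem~\ref{splitt} here would be circular. You avoid an explicit appeal, but the substitute arguments you sketch are not carried far enough to replace it. The inductive route rests on the assertion that ``the semi-split extension associated with the innermost ideal $I_n$ splits off a $P_{[n,n]}$-free summand from $M$,'' and this is not justified and not obvious: a surjection of $\mathcal{NT}$-modules that splits at each $Y$ as abelian groups need not split as a morphism of $\mathcal{NT}$-modules, which is exactly why the structure theorem is nontrivial. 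Likewise, the ``boxwise'' route is a program, not a proof: one would have to actually exhibit the free decomposition of $K$ from the box description \eqref{conditions}, and no such computation is presented. Your final sentence --- that $\mathcal{NT}$ has global projective dimension one on exact modules --- is a restatement of the theorem, not evidence for it. What is missing is the concrete argument (whatever form it takes in \cite{MN12}) that produces the length-one resolution without presupposing a structure result that is itself derived from the existence of such resolutions.
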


The \emph{free $\mathcal{NT}$-module on $Y$}, for $Y \in \LCX$, is defined by $$Q_Y(Z):=\mathcal{NT}_*(Y,Z) \qquad \text{for every }Z\in \LCX.$$ An $\mathcal{NT}$-module is \emph{free} if it is isomorphic to a direct sum of degree-shifted free modules $Q_Y[j],\,j\in \Z/2$.

\begin{theorem}[Meyer-Nest {\cite[Theorem 3.12]{MN12}}] \label{splitt}
Let $M \in \mathfrak{Mod}(\mathcal{NT})_{\mathrm{c}}$. Then $M$ is a free $\mathcal{NT}$-module if and only if $M(Y)$ is a free abelian group for all $Y \in \LCX$ and $M$ is exact.
\end{theorem}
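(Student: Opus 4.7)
The forward direction is routine. For any $Y, Z \in \LCX$ and $j \in \Z/2$, the abelian group $Q_Y[j](Z) = \mathcal{NT}_*(Y, Z)[j]$ is either $0$ or $\Z$ by \eqref{conditions}, hence free abelian, and this property passes to direct sums. Exactness of $Q_Y$ follows from the representability $Q_Y \cong \mathrm{FK}(\R_Y)$ (Theorem \ref{repres} combined with Yoneda), since the filtrated $\K$-theory of any $X$-C*-algebra is exact: its components are $\K$-groups of sub-quotients fitting into the standard $6$-term exact sequences. Exactness is preserved under direct sums.

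For the harder direction, the plan is to build an explicit isomorphism $\Phi: F \to M$, with $F = \bigoplus_\alpha Q_{Y_\alpha}[j_\alpha]$ free, by choosing bases for the $M(Y)$'s in a compatible manner. First I would fix a linear order on $\LCX$ refining the partial order $Y' \prec Y$ defined by ``there is a nonzero generator in $\mathcal{NT}_*(Y', Y)$ with $Y' \neq Y$'' (in the language of the diagram in Figure \ref{fig:1}, essentially ordering by the box structure $B_Y$). Processing intervals in this order, at stage $Y$ define $N_Y \subseteq M(Y)$ to be the subgroup generated by all elements of the form $\alpha \cdot \eta$ for generators $\alpha: Y' \to Y$ in $\mathcal{NT}$ with $Y' \prec Y$ and previously chosen basis elements $\eta \in M(Y')$. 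Using the $6$-term exact sequences $M(U) \to M(Y) \to M(Y \setminus U) \to M(U)[1]$ coming from exactness of $M$, show that $N_Y$ is a direct summand of the free abelian group $M(Y)$. Pick a $\Z/2$-graded free basis of a chosen complement; each basis element $\xi$ of degree $j_\xi$ yields by Yoneda a morphism $\phi_\xi: Q_Y[j_\xi] \to M$, and $\Phi$ is defined as the direct sum of all these $\phi_\xi$.

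Surjectivity of $\Phi$ would follow by induction on the chosen order on $\LCX$: at stage $Y$, the image of $\Phi$ contains $N_Y$ by the inductive step, and the new summands $Q_Y[j_\xi]$ cover a complement in $M(Y)$. Injectivity requires more care: one must show that at each $Z$, the contributions to $M(Z)$ from basis elements at different $Y$'s are linearly independent, by tracking through the box structure $B_Y$ of \eqref{conditions} and exploiting exactness of $M$ to rule out any unwanted relations that could force distinct basis elements to collide. The step I expect to be the main obstacle is the assertion that $N_Y$ is a direct summand of $M(Y)$, and not merely a subgroup: freeness of $M(Y)$ by itself does not make arbitrary subgroups into summands, and it is precisely here that exactness of $M$ must be used essentially. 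I expect to handle this by splitting each relevant $6$-term exact sequence into short exact sequences of free abelian groups (which split for free) and then gluing the splittings consistently as the induction progresses through $\LCX$.
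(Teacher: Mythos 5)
Your plan takes a genuinely different route from the paper. The paper (following Meyer--Nest~\cite{MN12}) proves the hard direction homologically: by Theorem~\ref{proj-res}, exactness of $M$ is equivalent to $M$ admitting a projective resolution of length~$1$ in $\mathfrak{Mod}(\mathcal{NT})_{\mathrm{c}}$; when in addition each $M(Y)$ is free abelian, this length-$1$ resolution degenerates to a length-zero resolution, so $M$ is projective, and projective $\mathcal{NT}$-modules are free. That argument leverages the already-established machinery and avoids any explicit basis bookkeeping. You instead attempt a direct inductive construction of a basis, processing $\LCX$ in some linear order and splitting off complements at each stage.

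There are, however, concrete gaps. First, the relation $Y' \prec Y$ you define by nonvanishing of $\mathcal{NT}_*(Y',Y)$ (with $Y'\neq Y$) is \emph{not} a partial order once degree-$1$ arrows are admitted: by \eqref{conditions}, for instance with $Y'=[2,3]$ and $Y=[1,2]$ one has $\mathcal{NT}_0(Y',Y)\cong\Z$ (so $Y'\prec Y$) and $\mathcal{NT}_1(Y,Y')\cong\Z$ (so $Y\prec Y'$). So the linear order you propose to ``refine'' does not exist as stated; you would need to restrict to degree-$0$ generators, and then re-examine whether the degree-$1$ generators can be accommodated in the inductive scheme. Second, the two steps you flag as delicate are indeed where the proof would actually live: showing $N_Y$ is a summand (the individual short exact sequences of free abelian groups do split since subgroups of free abelian groups are free, but ``gluing the splittings consistently'' across the many overlapping $6$-term sequences is precisely the hard part) and the injectivity of $\Phi$ (which you only sketch as ``tracking through the box structure''). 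These are not small technicalities; the homological route in the paper is designed exactly to avoid doing this combinatorial work by hand. As written, your proposal is an outline of a plausible alternative strategy, not a proof.
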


This theorem is a consequence of the fact that in case $M(Y)$ is free for all $Y \in \LCX$, a 1-step projective resolution of $M$ degenerates to a length-zero resolution, making $M$ itself projective and as a consequence free. 

For our classification, we will use the localized version of Theorem \ref{splitt}. For $p \in \Spec \Z$, let $$\mathcal{NT}^p:=\mathcal{NT}\otimes_\Z \mathbb{F}_p[0].$$
\begin{definition}
The \emph{free $\mathcal{NT}^p$-module on $Y$}, for $Y \in \LCX$ and $p \in \Spec \Z$, is defined by $$Q_Y^p(Z):=\mathcal{NT}^p_*(Y,Z)=\mathcal{NT}_*(Y,Z) \otimes_\Z \mathbb F_p[0] \qquad \text{for every }Z\in \LCX.$$ An $\mathcal{NT}^p$-module is \emph{free} if it is isomorphic to a direct sum of degree-shifted free modules $Q_Y^p[j],\,j\in \Z/2$.
\end{definition}

Even though the following theorem is not proved by Meyer-Nest, we give it here without a proof; the reason is that the proof is word by word the same as for Theorem \ref{splitt}, one just needs to replace the ring $\mathcal{NT}$ with $\mathcal{NT}^p$.

\begin{theorem} \label{reward}
Let $M \in \mathfrak{Mod}(\mathcal{NT})_{\mathrm{c}}$ and $p \in \Spec \Z$. Then $M$ is a free $\mathcal{NT}^p$-module if and only if $M(Y)$ is an $\mathbb{F}_p$-vector space for all $Y \in \LCX$ and $M$ is exact.
\end{theorem}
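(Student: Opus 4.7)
The plan is to reproduce the argument of Theorem \ref{splitt} verbatim, with a uniform substitution $\mathbb{Z} \rightsquigarrow \mathbb{F}_p$, $\mathcal{NT} \rightsquigarrow \mathcal{NT}^p$, $\R_Y \rightsquigarrow \R_Y^p$, $Q_Y \rightsquigarrow Q_Y^p$. Every structural ingredient of the original proof persists at the $p$-local level: by Remark \ref{local-gener}, $\R_Y^p$ lies in $\mathcal{B}(X)$ with $\mathrm{FK}(\R_Y^p) \cong Q_Y^p$, so Theorem \ref{repres} and Theorem \ref{proj-res} both transfer to the localised generators. An $\mathcal{NT}$-module with $\mathbb{F}_p$-vector-space values is canonically an $\mathcal{NT}^p$-module, so there is no clash of underlying categories.

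For the ``only if'' direction, a free $\mathcal{NT}^p$-module $M \cong \bigoplus_\alpha Q_{Y_\alpha}^p[j_\alpha]$ has each value $M(Z) \cong \bigoplus_\alpha \mathcal{NT}^p_*(Y_\alpha, Z)[j_\alpha]$, a direct sum of copies of $\mathcal{NT}_*(Y_\alpha, Z) \otimes_{\mathbb{Z}} \mathbb{F}_p$ and therefore an $\mathbb{F}_p$-vector space. Exactness of each summand $Q_Y^p \cong \mathrm{FK}(\R_Y^p)$ is Theorem \ref{proj-res}, and coproducts preserve exactness in $\mathfrak{Ab}^{\mathbb{Z}/2}$.

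For the converse, assume $M$ is exact with each $M(Y)$ an $\mathbb{F}_p$-vector space. Choose a homogeneous $\mathbb{F}_p$-basis of each $M(Y)$; Yoneda applied to $Q_Y^p$ converts every basis vector in degree $j$ into a morphism $Q_Y^p[j] \to M$, and their assembly yields a surjection $\pi \colon P_0 \twoheadrightarrow M$ with $P_0$ a free $\mathcal{NT}^p$-module. The kernel $K := \ker \pi$ has $\mathbb{F}_p$-vector-space values (being a submodule of $P_0$), and the long exact homology sequence of the short exact sequence $0 \to K \to P_0 \to M \to 0$ of $6$-periodic complexes of abelian groups forces $K$ to be exact as well, since both $P_0$ and $M$ are. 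Now Meyer--Nest's degeneration step runs verbatim in the $p$-local setting: because each $M(Y)$ is projective over $\mathbb{F}_p$, the length-$1$ projective resolution of $M$ over $\mathcal{NT}^p$ collapses, realising $M$ as a direct summand of $P_0$ and hence, by the same recursive splitting, as a free $\mathcal{NT}^p$-module itself.

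The main obstacle is really bookkeeping rather than ideology: one must verify that the ``degeneration'' step of Theorem \ref{splitt} (passing from a length-$1$ projective resolution to a length-zero one, and from projectivity to freeness) survives the substitution. It uses only three inputs, each of which is immediate in the localised setting: representability of $Q_Y^p$ via the Yoneda lemma, exactness of $Q_Y^p$ via Remark \ref{local-gener} and Theorem \ref{proj-res}, and projectivity of every $\mathbb{F}_p$-module over the field $\mathbb{F}_p$. Since all three persist, the original argument transfers without alteration.
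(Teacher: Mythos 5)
Your proposal is correct and matches the paper's intent exactly: the paper gives no detailed proof of Theorem~\ref{reward}, stating only that ``the proof is word by word the same as for Theorem~\ref{splitt}, one just needs to replace the ring $\mathcal{NT}$ with $\mathcal{NT}^p$'', and your argument carries out precisely that substitution, while helpfully spelling out the three ingredients (Yoneda for $Q_Y^p$, exactness of $\mathrm{FK}(\R_Y^p)$ via Remark~\ref{local-gener}, and projectivity over the field $\mathbb{F}_p$) that make the transfer legitimate.
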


Recall that a \emph{multiset} is a collection of objects in which the elements are allowed to repeat.

As an easy corollary of Theorem \ref{reward} we get:

\begin{cor} \label{split}
Let $M \in \mathfrak{Mod}(\mathcal{NT})_{\mathrm{c}}, \;p \in \Spec \mathbb{Z}$ and $j=0,1$. Then $M\cong \bigoplus_{Y\in J}\mathrm{FK}\big(\R_Y^p\big)[j_Y]$ for some \textup{(}possibly countably infinite\textup{)} multiset $J$ with elements from $\LCX$ if and only if $M(Y)$ is an $\mathbb{F}_p$-vector space for all $Y \in \LCX$ and $M\cong \mathrm{FK}(A)$ for some $A \in \mathfrak{KK}(X)$.
\end{cor}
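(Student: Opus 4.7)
The plan is to handle the two directions separately, with the backward direction being the substantial one. For the forward implication, assume $M \cong \bigoplus_{Y \in J} \mathrm{FK}(\R_Y^p)[j_Y]$. By Remark~\ref{local-gener}, each group $\K_*(\R_Y^p(Z)) = \mathrm{FK}(\R_Y^p)(Z)$ is either $0$, $\mathbb{F}_p[0]$, or $\mathbb{F}_p[1]$, so evaluating the coproduct at any $Z \in \LCX$ gives a direct sum of $\mathbb{F}_p$-vector spaces, which is again an $\mathbb{F}_p$-vector space. For the realizability part, each $\R_Y^p$ lies in $\mathcal{B}(X)$ (it is $\R_Y \in \mathcal{B}(X)$ tensored with the bootstrap object $\kappa(p)$), and $\mathcal{B}(X)$ is localizing, so the countable coproduct $A := \bigoplus_{Y\in J} \R_Y^p[j_Y]$ exists in $\mathcal{B}(X)$. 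Since $\mathrm{FK}$ is additive and commutes with countable coproducts (K-theory does), we get $\mathrm{FK}(A) \cong M$.

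For the backward direction, assume $M \cong \mathrm{FK}(A)$ for some $A \in \mathfrak{KK}(X)$ and $M(Y)$ is an $\mathbb{F}_p$-vector space for every $Y \in \LCX$. By Theorem~\ref{proj-res}, the condition $M \cong \mathrm{FK}(A)$ forces $M$ to be exact. The pointwise $\mathbb{F}_p$-vector space hypothesis together with exactness puts us exactly in the situation of Theorem~\ref{reward}, which yields an isomorphism
\[
M \cong \bigoplus_{Y \in J} Q_Y^p[j_Y]
\]
for some countable multiset $J$ with elements in $\LCX$ and shifts $j_Y \in \Z/2$.

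It remains to identify $Q_Y^p$ with $\mathrm{FK}(\R_Y^p)$. By Theorem~\ref{repres} and the Yoneda lemma, $Q_Y(Z) = \mathcal{NT}_*(Y,Z) \cong \K_*(\R_Y(Z)) = \mathrm{FK}(\R_Y)(Z)$, naturally in $Z$, so $Q_Y \cong \mathrm{FK}(\R_Y)$ as $\mathcal{NT}$-modules. Tensoring pointwise with $\mathbb{F}_p[0]$ gives, on one side, $Q_Y^p = Q_Y \otimes_\Z \mathbb{F}_p[0]$ by definition, and on the other side $\K_*(\R_Y(Z)) \otimes_\Z \mathbb{F}_p[0] \cong \K_*(\R_Y(Z) \otimes \kappa(p)) = \mathrm{FK}(\R_Y^p)(Z)$, where the first isomorphism is the Künneth formula, valid because $\K_*(\R_Y(Z))$ is free abelian by~\eqref{conditions} (this is exactly the computation made in Remark~\ref{local-gener}). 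Combining these identifications gives $M \cong \bigoplus_{Y \in J} \mathrm{FK}(\R_Y^p)[j_Y]$, as required.

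The only real subtlety is the last step, namely making sure that the abstractly free $\mathcal{NT}^p$-module produced by Theorem~\ref{reward} is genuinely the filtrated K-theory of a localized representative $\R_Y^p$, which relies crucially on the torsion-freeness of $\K_*(\R_Y)$ to apply Künneth. Everything else reduces to citing the preceding theorems in the expected way.
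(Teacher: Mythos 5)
Your proof is correct and takes essentially the same route as the paper: Theorem~\ref{proj-res} gives exactness, Theorem~\ref{reward} gives the free $\mathcal{NT}^p$-module decomposition, and the K\"unneth argument from Remark~\ref{local-gener} (relying on torsion-freeness of $\K_*(\R_Y)$) identifies $Q_Y^p$ with $\mathrm{FK}(\R_Y^p)$. Your treatment of the easy forward direction is actually more explicit than the paper's one-line dismissal, which is welcome but not a substantive difference.
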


\begin{proof}
Let $M(Y)$ be an $\mathbb{F}_p$-vector space for all $Y \in \LCX$ and $M\cong \mathrm{FK}(A)$ for some $A \in \mathfrak{KK}(X)$. By~Theorem \ref{proj-res} the latter condition means that $M$ is exact. Then by Theorem \ref{reward}, conditions (\ref{conditions}) and Remark \ref{local-gener}
\begin{align*} M(Z) & \cong \bigoplus_{Y \in J}\mathcal{NT}^p_*(Y,Z)[j_Y] \cong \bigoplus_{Y \in J}\mathcal{NT}_*(Y,Z)[j_Y]\otimes_\Z \mathbb F_p[0] \\ & \cong \bigoplus_{Y \in J} \K_*\big(\R_Y(Z)\big)[j_Y]\otimes_\Z \mathbb F_p[0]   \cong \bigoplus_{Y \in J} \K_*\big(\R_Y^p(Z)\big)[j_Y].
\end{align*} 

Since $M=\bigoplus_{Z\in \LCX}M(Z)$, the definition of filtrated $\K$-theory gives
\begin{align*} 
M & \cong \bigoplus_{Z\in \LCX}\bigoplus_{Y \in J} \K_*\big(\R_Y^p(Z)\big)[j_Y] \cong  \bigoplus_{Z\in \LCX}\bigoplus_{Y \in J}\mathrm{FK}_Z(\R_Y^p)[j_Y] \\ 
  & \cong \bigoplus_{Y \in J}\bigoplus_{Z\in \LCX}\mathrm{FK}_Z(\R_Y^p)[j_Y] \cong  \bigoplus_{Y \in J} \mathrm{FK}(\R_Y^p)[j_Y].
\end{align*}

The reverse implication follows directly from Remark \ref{local-gener}.
\end{proof}

\section{Properties of $\mathcal{B}(X)$ and cohomological support} \label{Properties of boot}
In this section, we collect some facts that will be used later to prove the classification theorem.

\subsection{Some general results for $\mathcal{B}(X)$}

In this subsection, $X$ denotes an arbitrary topological space.

\begin{definition}
For an abelian group $G$, let $\kappa(G)$ be the unique object in $\mathcal{B}:=\mathcal{B}(\{*\})$ with $\K_0(\kappa(G))=G$ and $\K_1(\kappa(G))=0$.
\end{definition}
For example, in this notation $\kappa(p)=\kappa(\mathbb{F}_p).$
\begin{lemma}
$\kappa(-)$ has the following properties: \\
(i) $\kappa(\bigoplus_{i \in I}G_i ) \cong \bigoplus_{i \in I}\kappa(G_i)$; \\
(ii) Let $(G_i, f_j^i)$ be a countable inductive system and $(\kappa(G_i), \alpha_j^i)$ its lift by $\K$\nobreakdash-theory. Then $ \kappa(\varinjlim G_i) \cong \holim \kappa(G_i).$
\end{lemma}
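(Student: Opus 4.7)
The plan is to deduce both statements from the defining uniqueness property of $\kappa(G)$: any $A \in \mathcal{B}$ with $\K_0(A) \cong G$ and $\K_1(A) = 0$ must be isomorphic to $\kappa(G)$.

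For (i), I would first observe that $\bigoplus_{i \in I} \kappa(G_i)$ lies in $\mathcal{B}$ because $\mathcal{B}$ is a localizing subcategory of $\mathfrak{KK}$ and hence closed under coproducts that exist in the ambient category. Then, since $\K$-theory commutes with countable coproducts in $\mathfrak{KK}$ (coproducts there are computed as C*-algebra direct sums, and $\K_*$ of a C*-algebra direct sum is the direct sum of the $\K_*$'s), I compute $\K_0(\bigoplus_i \kappa(G_i)) \cong \bigoplus_i G_i$ and $\K_1(\bigoplus_i \kappa(G_i)) \cong 0$. The uniqueness clause in the definition of $\kappa(-)$ then forces $\bigoplus_i \kappa(G_i) \cong \kappa(\bigoplus_i G_i)$.

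For (ii), I would work with the standard mapping telescope definition of the homotopy colimit in a triangulated category with countable coproducts: $\holim \kappa(G_i)$ is defined as the third object in an exact triangle
\[
\bigoplus_{i} \kappa(G_i) \xrightarrow{\,\mathrm{id} - S\,} \bigoplus_{i} \kappa(G_i) \longrightarrow \holim \kappa(G_i) \longrightarrow \Sigma \bigoplus_i \kappa(G_i),
\]
where $S$ is built from the connecting morphisms $\alpha_j^i$. This telescope object lies in $\mathcal{B}$ since $\mathcal{B}$ is closed under countable coproducts and formation of triangles. Applying $\K_*$, which is a homological functor commuting with countable coproducts, yields a long exact sequence. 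The key computational fact is that for any countable inductive system of abelian groups $(H_i, g_j^i)$ the map $\mathrm{id} - S \colon \bigoplus_i H_i \to \bigoplus_i H_i$ is injective with cokernel $\varinjlim H_i$. This is precisely the classical presentation of a countable filtered colimit of abelian groups; applied to $H_i = \K_*(\kappa(G_i))$ it splits the long exact sequence into short exact pieces and gives $\K_0(\holim \kappa(G_i)) \cong \varinjlim G_i$ and $\K_1(\holim \kappa(G_i)) \cong 0$. Uniqueness of $\kappa(\varinjlim G_i)$ in $\mathcal{B}$ then yields the desired isomorphism.

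I expect the main (mild) obstacle to be bookkeeping around the homotopy colimit: one needs to confirm that $\holim$ genuinely produces an object of $\mathcal{B}$, that $\K_*$ truly commutes with the countable coproducts appearing in the telescope, and that the injectivity of $\mathrm{id} - S$ is available in both degrees (it is, since $\K_1(\kappa(G_i)) = 0$ in one degree and the filtered-colimit presentation handles the other). None of these steps requires new input beyond the existence of arbitrary countable coproducts in $\mathcal{B}$, additivity/continuity of $\K_*$, and the uniqueness part of the definition of $\kappa(-)$.
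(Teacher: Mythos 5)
Your proof is correct and follows essentially the same route as the paper: part (i) via additivity of $\K$-theory and uniqueness of $\kappa$, and part (ii) via the mapping-telescope triangle, the observation that $\mathrm{id}-\mathrm{shift}$ on $\bigoplus_i \K_*(\kappa(G_i))$ is injective with cokernel $\varinjlim G_i$, and uniqueness again. The paper's version is more terse (it leaves the final appeal to uniqueness implicit), but the decomposition and key computation are identical to yours.
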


\begin{proof}
(i) follows from additivity of K-theory. \\
(ii) By definition, the homotopy limit fits in an exact triangle $$ \Sigma \holim \kappa(G_i)\longrightarrow \bigoplus \kappa(G_i) \xrightarrow{\text{id} - \text{shift}_\alpha} \bigoplus \kappa(G_i)  \longrightarrow \holim \kappa(G_i).$$
After applying the K-theory functor and decomposing the resulting exact sequence into short exact sequences, we get $$\coker(\text{id} - \text{shift}_f) \hookrightarrow \K_*( \holim \kappa(G_i)) \twoheadrightarrow \ker(\text{id} - \text{shift}_f).$$ Now $\ker(\text{id} - \text{shift}_f[1]) \cong 0$ and $\coker(\text{id} - \text{shift}_f) \cong \varinjlim G_i$ by definition.
\end{proof}

\begin{lemma} \label{ideal}
 Let $\mathcal{S} \subseteq \mathcal{B}(X)$ be a localizing subcategory. For any $A \in \mathcal{S}$ and $G$ a countable abelian group, we have $A \otimes \kappa(G) \in \mathcal{S}$.
\end{lemma}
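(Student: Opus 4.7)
The plan is to fix $A$ and vary the argument $B$ on the right of the tensor product, using that $\mathcal{B}$ itself is generated, as a localizing subcategory of $\mathfrak{KK}$, by the single object $\mathbb{C}$.

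First I would consider the exterior tensor product functor
$$F := A \otimes - : \mathfrak{KK} \longrightarrow \mathfrak{KK}(X),$$
where the $X$-structure on $A \otimes B$ is inherited from $A$ (i.e.\ $(A\otimes B)(U) := A(U)\otimes B$). This functor is triangulated (it carries semi-split extensions of C*-algebras to semi-split extensions of $X$-C*-algebras, since tensoring with $B$ preserves completely positive contractive sections) and it preserves countable coproducts (C$_0$-direct sums commute with spatial tensor products up to KK-equivalence for separable C*-algebras). Both of these are standard facts about Kasparov theory that are already in force in the ambient category $\mathfrak{KK}(X)$, and I would quote them without reproof.

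Now I would introduce the auxiliary full subcategory
$$\mathcal{T} := \{ B \in \mathcal{B} \mid A \otimes B \in \mathcal{S}\} \subseteq \mathcal{B}.$$
Because $\mathcal{S} \subseteq \mathcal{B}(X)$ is localizing and $F$ is triangulated and coproduct-preserving, $\mathcal{T}$ is closed under suspensions, cones of morphisms, and countable coproducts, hence is a localizing subcategory of $\mathcal{B}$. Moreover $\mathbb{C}\in \mathcal{T}$ since $A\otimes \mathbb{C} \cong A$ already lies in $\mathcal{S}$. By the defining property of the bootstrap class $\mathcal{B}$ as the smallest localizing subcategory of $\mathfrak{KK}$ containing $\mathbb{C}$, this forces $\mathcal{T} = \mathcal{B}$.

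Finally, for any countable abelian group $G$ the object $\kappa(G)$ belongs to $\mathcal{B}$ by definition, hence to $\mathcal{T}$, which is precisely the statement $A\otimes \kappa(G) \in \mathcal{S}$. The only point that requires attention is the verification that $F$ is a well-defined triangulated functor $\mathcal{B} \to \mathcal{B}(X)$ in the first place; one has to check that $A\otimes B$ still lies in $\mathcal{B}(X)$ when $B\in\mathcal{B}$, but the same ``generation by $\mathbb{C}$'' argument applied to $\{B \in \mathcal{B} \mid A\otimes B \in \mathcal{B}(X)\}$ (which contains $\mathbb{C}$ because $A\otimes\mathbb{C}\cong A\in\mathcal{B}(X)$) takes care of this, so this is not a genuine obstacle.
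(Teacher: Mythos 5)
Your proof is correct, and it takes a genuinely different route from the paper. The paper proceeds by the structure theory of countable abelian groups: it first treats finitely generated $G$ by decomposing $G \cong \mathbb{Z}^n \oplus \bigoplus_k \mathbb{Z}/p_k^{i_k}$ and tensoring the explicit Koszul-type triangle $\Sigma\kappa(\mathbb{Z}/p_k^{i_k}) \to \mathbb{C} \to \mathbb{C} \to \kappa(\mathbb{Z}/p_k^{i_k})$ with $A$, and then passes to general countable $G$ by writing $G$ as a direct limit of finitely generated subgroups and invoking the auxiliary fact that $\kappa(\varinjlim G_i) \cong \holim \kappa(G_i)$. You instead quote the defining universal property of the one-point bootstrap class $\mathcal{B}$ as the smallest localizing subcategory of $\mathfrak{KK}$ containing $\mathbb{C}$, observe that the exterior tensor functor $F = A \otimes -$ is triangulated and coproduct-preserving, and conclude that the full preimage $\mathcal{T} = \{B \in \mathcal{B} \mid A \otimes B \in \mathcal{S}\}$ is localizing and contains $\mathbb{C}$, hence equals $\mathcal{B}$. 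Both arguments are sound. Your approach is shorter, avoids the homotopy-colimit lemma entirely, and in fact proves more than is asked: it establishes $A \otimes B \in \mathcal{S}$ for arbitrary $B \in \mathcal{B}$, which is exactly the Corollary that the paper derives immediately afterwards by splitting $D$ into $\kappa(\K_0(D)) \oplus \kappa(\K_1(D))[1]$. What the paper's more concrete route buys is that the triangle $\Sigma\kappa(\mathbb{Z}/p^n) \to \mathbb{C} \to \mathbb{C} \to \kappa(\mathbb{Z}/p^n)$ and the $\kappa$-vs-$\holim$ compatibility are reused explicitly in the proofs of Lemma~\ref{gener} and Lemma~\ref{iff}, so that machinery has to be set up somewhere regardless; your argument, however, is the cleaner way to dispose of this particular lemma.
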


\begin{proof}
First let $G$ be finitely generated. Then $G \cong \mathbb{Z}^n \oplus \mathbb{Z}/{p_1^{i_1}} \cdots
\oplus \mathbb{Z}/{p_m^{i_m}}$. Now $A \otimes \kappa(\mathbb{Z}) \cong A\otimes \mathbb{C} \cong A \in \mathcal{S}$. Next consider the short exact sequence of $\mathbb{Z}/2$-graded abelian groups
\begin{equation*}
0\rightarrow \mathbb{Z}[0]\xrightarrow{p_k^{i_k}}\mathbb{Z}[0] \rightarrow \mathbb{Z}/{p_k^{i_k}}[0] \rightarrow 0.
\end{equation*}  
It lifts to a unique triangle in $\mathcal{B}$, namely,
\begin{equation} \label{tri}
\Sigma \kappa(\mathbb{Z}/{p_k^{i_k}}) \longrightarrow \mathbb{C} \longrightarrow \mathbb{C} \longrightarrow \kappa(\mathbb{Z}/{p_k^{i_k}}).
\end{equation}
Tensoring 
\eqref{tri} with $A$ leads to the triangle $$ \Sigma \big( A \otimes \kappa(\mathbb{Z}/{p_k^{i_k}})\big) \longrightarrow A \longrightarrow A \longrightarrow A \otimes \kappa(\mathbb{Z}/{p_k^{i_k}}).$$ We conclude that $A \otimes \kappa(\mathbb{Z}/{p_k^{i_k}}) \in \mathcal{S}$. Thus $A \otimes \kappa(G) \in \mathcal{S}$.
Now let $G$ be an arbitrary countable abelian group. Then $G \cong \varinjlim_{j \in \mathbb{N}} H_j,$ where $H_j \subseteq G$ are finitely generated subgroups. We have 
\begin{equation*}
A \otimes\kappa(G) \cong A \otimes \kappa(\varinjlim H_j) \cong \holim A \otimes \kappa(H_j) \in \mathcal{S}. \qedhere 
\end{equation*}
\end{proof}

\begin{cor}
 For any $D \in  \mathcal{B}$ and $A \in  \mathcal{B}(X)$, if $A \in \mathcal{S}$ then $A \otimes D \in \mathcal{S}$.
\end{cor}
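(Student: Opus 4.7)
The plan is to reduce an arbitrary $D \in \mathcal{B}$ to the special objects $\kappa(G)$ handled by Lemma~\ref{ideal} via the classical Universal Coefficient Theorem, and then exploit that tensor product distributes over coproducts and that $\mathcal{S}$ is localizing.

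First, I would invoke the UCT for the classical bootstrap class $\mathcal{B}$ (Rosenberg--Schochet): $\K$-theory is a complete invariant on $\mathcal{B}$, and every countable $\mathbb{Z}/2$-graded abelian group is realized. Since $D \in \mathcal{B}$ has separable underlying C*-algebra, $\K_*(D)$ is countable, and there is an isomorphism in $\mathcal{B}$
\begin{equation*}
D \;\cong\; \kappa\bigl(\K_0(D)\bigr)\ \oplus\ \Sigma\kappa\bigl(\K_1(D)\bigr),
\end{equation*}
using the definition of $\kappa(-)$ together with the fact that $\Sigma\kappa(G)$ has $\K$-theory concentrated in degree $1$ equal to $G$.

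Next, tensoring with $A$ and using that $-\otimes A$ is an exact functor $\mathcal{B} \to \mathcal{B}(X)$ which commutes with suspension and with countable coproducts, I obtain
\begin{equation*}
A \otimes D \;\cong\; \bigl(A \otimes \kappa(\K_0 D)\bigr)\ \oplus\ \Sigma\bigl(A \otimes \kappa(\K_1 D)\bigr).
\end{equation*}
By Lemma~\ref{ideal}, each summand $A \otimes \kappa(\K_i D)$ lies in $\mathcal{S}$, because $\K_i(D)$ is a countable abelian group. Since $\mathcal{S}$ is localizing, hence closed under suspension and under the coproducts that exist in $\mathcal{B}(X)$, the direct sum also lies in $\mathcal{S}$, yielding $A \otimes D \in \mathcal{S}$.

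The only real issue is justifying the initial splitting of $D$: one needs that every object of $\mathcal{B}$ with free K-theory in each degree is a direct sum of shifted copies of $\mathbb{C}$ (or, more generally, that any $D \in \mathcal{B}$ splits into a degree-$0$ and degree-$1$ part given by $\kappa$ of its K-groups). This follows cleanly from the UCT: both $D$ and $\kappa(\K_0 D) \oplus \Sigma\kappa(\K_1 D)$ are objects of $\mathcal{B}$ with the same $\K$-theory, so they are isomorphic in $\mathcal{B}$. Everything else is formal from the triangulated tensor structure and Lemma~\ref{ideal}.
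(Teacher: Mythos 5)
Your proof is correct and follows essentially the same route as the paper: decompose $D \cong \kappa(\K_0 D) \oplus \Sigma\kappa(\K_1 D)$ via the classical UCT, then apply Lemma~\ref{ideal} to each summand and use that $\mathcal{S}$ is closed under suspension and direct sums. You simply spell out the UCT justification for the splitting, which the paper leaves implicit.
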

\begin{proof}
 $D \cong \kappa(G) \oplus \kappa(H)[1]$ for some abelian groups $G$ and $H$, namely, $G=\K_0(D)$ and $H=\K_1(D)$.
\end{proof} 

\begin{lemma}
 \label{gener}
 For all $A \in \mathcal{B}(X)$, $ \langle A \rangle \cong \langle A\otimes \kappa(p) \mid p \in \Spec \mathbb{Z} \rangle.$
\end{lemma}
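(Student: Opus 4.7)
My plan is to prove both inclusions. The containment $\langle A \otimes \kappa(p) \mid p \in \Spec \Z \rangle \subseteq \langle A \rangle$ is immediate from the preceding Corollary, since each $\kappa(p)$ lies in $\mathcal{B}$. For the reverse inclusion, set $\mathcal{S} := \langle A \otimes \kappa(p) \mid p \in \Spec \Z \rangle$; I want $A \in \mathcal{S}$. The key reduction step is to observe that $F := A \otimes (-) \colon \mathcal{B} \to \mathcal{B}(X)$ is exact and preserves countable coproducts (standard for the Kasparov tensor product), so $F^{-1}(\mathcal{S})$ is a localizing subcategory of $\mathcal{B}$ containing every $\kappa(p)$. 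Hence it suffices to show that $\mathbb{C} \in \langle \kappa(p) \mid p \in \Spec \Z \rangle$ already inside $\mathcal{B}$; applying $F$ then yields $A \cong A \otimes \mathbb{C} \in \mathcal{S}$.

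To prove this reduced claim inside $\mathcal{B}$, I apply $\kappa$ to the arithmetic fracture sequence $0 \to \Z \to \mathbb{Q} \to \mathbb{Q}/\Z \to 0$, which lifts to an exact triangle
$$\Sigma \kappa(\mathbb{Q}/\Z) \longrightarrow \mathbb{C} \longrightarrow \kappa(\mathbb{Q}) \longrightarrow \kappa(\mathbb{Q}/\Z)$$
in $\mathcal{B}$. Since $\kappa(\mathbb{Q}) = \kappa(0)$ is already one of the generators, it only remains to place $\kappa(\mathbb{Q}/\Z)$ in $\langle \kappa(p) \mid p \text{ prime}\rangle$. The Prüfer decomposition $\mathbb{Q}/\Z \cong \bigoplus_{p \text{ prime}} \Z(p^\infty)$ together with part (i) of the preceding lemma reduces this further to showing $\kappa(\Z(p^\infty)) \in \langle \kappa(p) \rangle$ for each prime $p$.

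For a fixed prime $p$, induction on $k$ using the triangles lifting $0 \to \Z/p \to \Z/p^k \to \Z/p^{k-1} \to 0$ yields $\kappa(\Z/p^k) \in \langle \kappa(p) \rangle$. Writing $\Z(p^\infty) = \varinjlim_k \Z/p^k$ and invoking part (ii) of the preceding lemma gives $\kappa(\Z(p^\infty)) \cong \holim_k \kappa(\Z/p^k)$, which lies in $\langle \kappa(p) \rangle$ as a homotopy colimit of its terms. Chaining these reductions completes the argument. The main obstacle I anticipate is careful bookkeeping rather than new mathematics — verifying cleanly that $F^{-1}(\mathcal{S})$ is actually localizing (i.e.\ that $A \otimes (-)$ is exact and preserves countable coproducts) and that the zero prime enters correctly via the rationalization step $\kappa(\mathbb{Q})$ — all the genuine homological content being supplied by the two preceding lemmas.
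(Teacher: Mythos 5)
Your proof is correct and mirrors the paper's argument: both use the arithmetic fracture triangle coming from $0 \to \Z \to \mathbb{Q} \to \mathbb{Q}/\Z \to 0$, the Pr\"{u}fer decomposition $\mathbb{Q}/\Z \cong \bigoplus_{p} \Z[\tfrac{1}{p}]/\Z$, and a homotopy-colimit presentation of $\Z[\tfrac{1}{p}]/\Z$ built from $\Z/p^k$ via Lemma 3.2. The only difference is organizational: you first reduce to $\mathbb{C} \in \langle \kappa(p) \mid p \in \Spec\Z\rangle$ inside $\mathcal{B}$ by pulling $\mathcal{S}$ back along $F = A\otimes(-)$, whereas the paper keeps $A$ in the picture and tensors every triangle with $A$ directly (as in the proof of Lemma \ref{ideal}); the underlying triangles and reductions are identical.
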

\begin{proof}
By Lemma \ref{ideal}, $A \otimes \kappa(\mathbb{Q}), A \otimes \kappa(\mathbb{Q/Z}) \in \langle A\rangle$. Moreover, there is an exact triangle $$ \Sigma \big(A \otimes \kappa(\mathbb{Q/Z})\big)\longrightarrow A \longrightarrow
 A \otimes \kappa(\mathbb{Q}) \longrightarrow A \otimes \kappa(\mathbb{Q/Z}). $$ So $ \langle A \rangle \cong \langle A\otimes 
 \kappa(\mathbb{Q}),A\otimes \kappa(\mathbb{Q/Z}) \rangle $. 
 We also have the isomorphisms 
\begin{align*}
 A \otimes \kappa(\mathbb{Q/Z}) \cong A \otimes \bigoplus_{p \text{ prime}}\kappa\big(\mathbb{Z}\big[\frac{1}{p}\big]/\mathbb{Z}\big) & \cong \bigoplus_{p \text{ prime}} A\otimes \kappa\big(\varinjlim_{n}\mathbb{Z}/p^n\mathbb{Z}\big) \\ 
 & \cong \bigoplus_{p \text{ prime}} \holim A\otimes\kappa(\mathbb{Z}/p^n\mathbb{Z}).
 \end{align*}
These isomorphisms, together with the exact triangles $$\Sigma\Big(A\otimes\kappa\big(\mathbb{Z}\big[\frac{1}{p}\big]/\mathbb{Z}\big)\Big) \rightarrow A\otimes\kappa(\mathbb{Z}/p^n\mathbb{Z}) \rightarrow A\otimes\kappa\big(\mathbb{Z}\big[\frac{1}{p}\big]/\mathbb{Z}\big)
\xrightarrow{\text{id}\otimes \tilde{p}^n} A\otimes\kappa\big(\mathbb{Z}\big[\frac{1}{p}\big]/\mathbb{Z}\big) $$ and $$\Sigma \big(A\otimes \kappa(\mathbb{Z}/p^m\mathbb{Z})\big) \rightarrow A\otimes \kappa(\mathbb{Z}/p^n\mathbb{Z}) \xrightarrow{\text{id}\otimes \tilde{p}^m}
 A\otimes \kappa(\mathbb{Z}/p^{n+m}\mathbb{Z}) \rightarrow A\otimes \kappa(\mathbb{Z}/p^m\mathbb{Z})$$ imply that 
 \begin{equation*}
 \langle A \otimes \kappa(\mathbb{Q/Z}) \rangle \cong \langle A\otimes\kappa(p) \mid p \in \Spec {\mathbb{Z}\backslash\{0\}}\rangle. \qedhere 
 \end{equation*}
 \end{proof}

 \subsection{Cohomological support}
 Recall that every abelian group has a one-step minimal injective resolution, which is unique up to isomorphism. Also, every injective abelian group is isomorphic 
 to a direct sum of \emph{indecomposable} ones, namely, $\mathbb{Q}$ and $\mathbb{Z}[\frac{1}{p}]/\mathbb{Z}$, where $p$ is a prime number~\cite{Mat58}. 
 
 Let $p \in \Spec \mathbb{Z}$. We say that $p$ appears in a minimal injective resolution of the abelian group $G$ if $\mathbb{Z}[\frac{1}{p}]/\mathbb{Z}$ for $p \neq0$, and $\mathbb{Q}$ for $p=0$,
 appears in degree zero or one in the direct sum decomposition of the minimal injective resolution of $G$. We define $$\supp_\mathbb{Z} G :=\{ p \in \Spec \mathbb{Z} \mid \text{$p$ appears in a minimal injective resolution of $G$} \}.$$ 

 \begin{lemma}
\label{iff}
Let $A\in \mathcal{B}$. Then $A \otimes \kappa(p) \ncong 0$ if and only if $p \in \supp_\mathbb{Z} \K_*(A).$
\end{lemma}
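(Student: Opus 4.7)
The plan is to express $\K_*(A\otimes \kappa(p))$ through the Rosenberg–Schochet K\"unneth formula and then to compare with $\supp_\mathbb{Z} \K_*(A)$ via Matlis' classification of indecomposable injective abelian groups, which has already been recalled just above the statement.

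Since $A\in \mathcal{B}$ is detected by its $\K$-theory (by the classical UCT of Rosenberg–Schochet), the condition $A\otimes \kappa(p)\ncong 0$ is equivalent to $\K_*(A\otimes \kappa(p))\neq 0$. Setting $G:=\K_*(A)$, the K\"unneth short exact sequence in $\mathcal{B}$,
\begin{equation*}
G\otimes_\mathbb{Z}\K_*(\kappa(p))\hookrightarrow \K_*(A\otimes \kappa(p))\twoheadrightarrow \mathrm{Tor}_1^\mathbb{Z}\bigl(G, \K_*(\kappa(p))\bigr)[1],
\end{equation*}
reduces this to the condition $G\otimes_\mathbb{Z} \mathbb{F}_p\neq 0$ or $\mathrm{Tor}_1^\mathbb{Z}(G,\mathbb{F}_p)\neq 0$ (with the convention $\mathbb{F}_0=\mathbb{Q}$). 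Using the length-$1$ free resolution $0\to\mathbb{Z}\xrightarrow{p}\mathbb{Z}\to \mathbb{F}_p\to 0$, for $p$ prime these two groups are $G/pG$ and the $p$-torsion subgroup $G[p]$; while for $p=0$ the Tor term vanishes by flatness of $\mathbb{Q}$ and the condition simplifies to $G\otimes \mathbb{Q}\neq 0$, i.e.\ that $G$ is not torsion.

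The remaining step is to identify these same conditions with $p\in \supp_\mathbb{Z} G$. Consider the minimal injective resolution $0\to G\to I^0\to I^1\to 0$. For $p$ prime, the Pr\"ufer group $\mathbb{Z}[\tfrac{1}{p}]/\mathbb{Z}$ appears in $I^0$ if and only if the $p$-socle of $G$ is nonzero, i.e.\ iff $G[p]\neq 0$, since $\mathbb{Z}[\tfrac{1}{p}]/\mathbb{Z}$ is the injective hull of $\mathbb{F}_p$ and $G\hookrightarrow I^0$ is essential; and it appears in $I^1\cong I^0/G$ if and only if the $p$-socle of $I^0/G$ is nonzero, which I would show (by a short diagram chase using divisibility of $I^0$) is equivalent to $G/pG\neq 0$. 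For $p=0$, if $G$ has an element of infinite order it generates a copy of $\mathbb{Z}$ whose injective hull $\mathbb{Q}$ sits inside $I^0$; conversely if $G$ is torsion, then both $I^0$ and $I^1$ are torsion and divisible, hence direct sums of Pr\"ufer groups with no $\mathbb{Q}$ summand. Combining these with the K\"unneth reduction yields the lemma.

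The main obstacle is the $p$-prime case of this last step, specifically the direction showing that $G/pG\neq 0$ forces a Pr\"ufer summand to appear in $I^1$. The argument rests on the following observation: given nonzero $\bar g\in G/pG$, lift to $g\in G$ and use divisibility of $I^0$ to find $h\in I^0$ with $ph=g$; then $[h]\in I^0/G$ is $p$-torsion, and it is nonzero, since $h\in G$ would give $\bar g = p\,\overline{h} = 0$ in $G/pG$. The injective hull of $\mathbb{F}_p\cdot[h]$ inside $I^1$ is then a Pr\"ufer summand, as required. Once this subtlety is handled, the rest of the identification is routine bookkeeping with the Bass numbers of $G$ over $\mathbb{Z}$.
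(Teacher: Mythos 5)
Your argument is correct, and it is a genuinely different route from the one the paper takes; both proofs do use the minimal injective resolution, but they connect $A\otimes\kappa(p)$ to it in different ways. The paper treats the prime case by identifying $A\otimes\kappa(p)$ with $\mathrm{cone}(A\xrightarrow{\tilde p}A)$, so non-vanishing becomes the statement that multiplication by $p$ on $\K_*(A)$ is not an isomorphism; it then transports that failure to $M_0$ or $M_1$ by essentiality for $M_0$ and the Five Lemma for $M_1$. Only in the case $p=0$ does the paper invoke the K\"unneth formula. You instead run the K\"unneth sequence uniformly for all $p$, reduce the question to the Bass-number data $G\otimes\mathbb F_p$ and $\mathrm{Tor}_1^{\mathbb Z}(G,\mathbb F_p)$, and then match $G[p]\neq 0$ with a Pr\"ufer summand in $I^0$ and $G/pG\neq 0$ with one in $I^1$. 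This buys a more explicit statement (you locate exactly where the $p$ appears) at the cost of one slightly more delicate step. That step is your claim that the $p$-socle of $I^0/G$ being nonzero is \emph{equivalent} to $G/pG\neq 0$. You write that it follows from ``a short diagram chase using divisibility of $I^0$,'' but divisibility of $I^0$ is only what you use for the direction you spelled out ($G/pG\neq 0$ produces a $p$-torsion class). For the converse — a nonzero $p$-torsion class $[h]\in I^0/G$ forces $G/pG\neq 0$ — you need the essentiality of $G\hookrightarrow I^0$: if $ph=pg$ for some $g\in G$ then $h-g$ is a nonzero element of order $p$ in $I^0$, so essentiality forces $h-g\in G$ and hence $h\in G$, a contradiction; therefore $ph\in G\setminus pG$. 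So both divisibility \emph{and} essentiality enter, just as they both enter the paper's Five-Lemma argument; once you record that, the proof is complete and matches the paper's conclusion in all cases.
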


\begin{proof}
First, assume $p \in \supp_\mathbb{Z} \K_*(A) $ and $ p \neq 0$; that is, $\mathbb{Z}[\frac{1}{p}]/\mathbb{Z}$ appears in some degree as a direct summand of $M_0$ or
$M_1$,  where $\K_*(A) \hookrightarrow M_0 \twoheadrightarrow M_1$ is a minimal injective resolution of $\mathbb{Z}/2$-graded abelian groups. If it appears in
$M_0$ in degree $k$, then $\im (\K_*(A))\cap \Sigma^k(\mathbb{Z}[\frac{1}{p}]/\mathbb{Z}) \ncong \{0\}$ (here we use $\Sigma$ to denote the shift functor, in order not to confuse it with adjoining an element) because $M_0$ is an essential extension of $\K_*(A)$. So $\K_*(A)$ contains an isomorphic copy of
 $\Sigma^k(\mathbb{Z}[\frac{1}{p}]/\mathbb{Z})$ or $\Sigma^k(\mathbb{Z}/p^n)$ for some $n \in \mathbb{N}$ (arbitrary subgroup of  $\Sigma^k(\mathbb{Z}[\frac{1}{p}]/\mathbb{Z})$). Thus $\K_*(A)\xrightarrow{p} \K_*(A)$ is not an
isomorphism. Now if $\Sigma^k(\mathbb{Z}[\frac{1}{p}]/\mathbb{Z})$ appears as a direct summand in $M_1$, but not in $M_0$, then $M_0 \xrightarrow{p} M_0$ is an isomorphism. If we assume that $\K_*(A)\xrightarrow{p} \K_*(A)$
is also an isomorphism, then, by the Five Lemma, so is $M_1 \xrightarrow{p} M_1$, which is a contradiction. So, if $p \neq 0$ and $p \in \supp_\mathbb{Z} \K_*(A) $ then  $\K_*(A)\xrightarrow{p} \K_*(A)$ is not 
an isomorphism. Therefore, the lift of this map $A\xrightarrow{\tilde{p}}A$ is also not an isomorphism. So, $\text{cone}(A\xrightarrow{\tilde{p}}A) \cong A\otimes \kappa(p) \ncong 0$.

Conversely, if $A\otimes \kappa(p)\cong \text{cone}(A\xrightarrow{\tilde{p}}A) \ncong 0$, then $\K_*(A)\xrightarrow{p} \K_*(A)$ is not an isomorphism. By the Five Lemma, one of $M_0 \xrightarrow{p} M_0$ or
$M_1 \xrightarrow{p} M_1$ is not an isomorphism as well; so $\mathbb{Z}[\frac{1}{p}]/\mathbb{Z}$ has to appear in some degree in $M_0$ or $M_1$.

Now consider $p=0$. Then $\mathbb{Q}$ appears in some degree $k$ as a direct summand of $M_0$ or $M_1$. In the first case,  $\im(\K_*(A))\cap \Sigma^k(\mathbb{Q}) \ncong \{0\}$, meaning that $\K_*(A)$ contains a torsion-free
subgroup, thus $\K_*(A)\otimes \mathbb{Q} \ncong 0$. However, we also know that $\K_*(A\otimes \kappa(0)) \cong \K_*(A)\otimes \mathbb{Q}$ by the K{\"u}nneth formula. If $\mathbb{Q}$ does not appear in $M_0$, then $M_0 \otimes \mathbb{Q} \cong 0$ and tensoring the minimal injective resolution with $\mathbb{Q}$ and using flatness
of $\mathbb{Q}$, we conclude that also $M_1 \otimes \mathbb{Q} \cong 0$ and thus  $\mathbb{Q}$ does not appear as a direct summand in $M_1$ either.

Conversely, if $A\otimes \kappa(0)\ncong 0$, then $ \K_*(A\otimes \kappa(0)) \cong \K_*(A) \otimes \mathbb{Q} \ncong 0$. As above, tensoring the minimal injective resolution of $\K_*(A)$ with $\mathbb{Q}$, gives $M_0 \otimes \mathbb{Q} \ncong 0$, so $0 \in \supp_\mathbb{Z} \K_*(A)$.
\end{proof}

\section{Localizing subcategories in the totally ordered case} \label{Localizing subcategories}

In this section, we restrict our attention to finite spaces with totally ordered lattice of open subsets. As observed in Section~\ref{prelim}, this amounts to considering $X=\{1,\dots,n\}$ totally ordered by $\leq$, where a subset is open if and only if it is of the form $ [a,n]:=\{x \in X \mid a\leq x\leq n\}$, $a\in X$. Then locally closed subsets are those of the form $[a,b]$ with $a\leq b$ and $a,b \in X$. The set of non-empty locally closed subsets is denoted by $\LCX$.

\begin{definition} 
Let $\mathcal{L} \subseteq  \mathcal{B}(X)$ be a localizing subcategory and $Y \in \LCX$. Define $\mathrm{U}_Y^{\mathcal{L}} \subseteq  \Spec \mathbb{Z}$ by
$$\mathrm{U}_Y^{\mathcal{L}}:=\{p \in  \Spec \mathbb{Z} \mid p \in \supp_\mathbb{Z}\K_*(A(Y)) \text{  for some  } A\in \mathcal{L} \}.$$  
\end{definition}

\begin{remark} \label{support}
In the introduction, we defined the support of an object $A \in \mathcal L$ in a localizing subcategory $\mathcal L \subseteq \mathcal B(X)$ as 
$$\supp A=\{ (p,Y) \mid \mathrm{K}_*(A(Y);\mathbb{F}_p) \neq 0 \},$$ and the support of $\mathcal L$ as $\supp \mathcal{L}=\bigcup_{A \in \mathcal{L}}\supp A.$

If $A \in \mathcal{B}$, we may set $\mathrm{K}_*(A;\mathbb{F}_p):=\K_*(A\otimes \kappa(p))$. Since the classical K{\"u}nneth sequence for $\K$-theory splits, $\K_*(A\otimes \kappa(p))$ is an $\mathbb{F}_p$-vector space. 

Thus, by Lemma \ref{iff}, for a localizing subcategory $\mathcal L \subseteq \mathcal B(X)$, $$\mathrm{U}_Y^{\mathcal{L}}=\{p \in  \Spec \mathbb{Z} \mid (p,Y) \in \supp \mathcal{L} \}.$$
\end{remark} 

We will prove that these sets are not independent: for any $Y \in \LCX$ and $\mathcal{L}\subseteq \mathcal{B}(X)$ a localizing subcategory, if $p \in \mathrm{U}_Y^{\mathcal{L}}$ then there exists a maximal box $$B_Z=\{W \in \LCX \mid \K_*(\R_Z(W)) \ncong 0 \}=\{W \in \LCX \mid \K_*(\R_Z^p(W)) \ncong 0 \},$$ such that $Y \in B_Z$ and $p \in \mathrm{U}_V^{\mathcal{L}}$ for all $V \in B_Z.$ In other words, we have
 
 \begin{lemma} \label{u}
  For every localizing subcategory $\mathcal{L} \subseteq \mathcal{B}(X)$ and $Y \in \LCX$, $$\mathrm{U}_Y^{\mathcal{L}} =
  \bigcup_{\substack{Z \in \LCX: \\ Y \in B_Z}} \bigcap_{ 
   V \in B_Z} \mathrm{U}_V^\mathcal{L}.$$
  \end{lemma}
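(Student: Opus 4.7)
The plan is to prove the two inclusions separately. The inclusion $\supseteq$ is immediate from the definitions: if $p$ lies in $\bigcap_{V \in B_Z} \mathrm{U}_V^{\mathcal{L}}$ for some $Z$ with $Y \in B_Z$, then specializing the intersection to $V = Y$ yields $p \in \mathrm{U}_Y^{\mathcal{L}}$.

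For the nontrivial inclusion $\subseteq$, I would start with $p \in \mathrm{U}_Y^{\mathcal{L}}$, witnessed by some $A \in \mathcal{L}$ with $p \in \supp_{\Z} \K_*(A(Y))$. The first step is to replace $A$ by $B := A \otimes \kappa(p)$, which still lies in $\mathcal{L}$ by Lemma \ref{ideal}. Since $B(V) = A(V) \otimes \kappa(p)$ for every $V \in \LCX$, the K\"unneth formula forces $\mathrm{FK}_V(B) = \K_*(B(V))$ to be an $\mathbb{F}_p$-vector space for every $V$, while Lemma \ref{iff} applied at $V = Y$ ensures $\mathrm{FK}_Y(B) \ne 0$.

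The second step is to decompose $B$. Since $\mathrm{FK}(B)$ is exact and takes $\mathbb{F}_p$-vector space values on every $V \in \LCX$, Corollary \ref{split} yields $\mathrm{FK}(B) \cong \bigoplus_{Y' \in J} \mathrm{FK}(\R_{Y'}^p)[j_{Y'}]$ for some countable multiset $J \subseteq \LCX$. The uniqueness clause of Corollary \ref{uct-cor} then lifts this module decomposition to an isomorphism $B \cong \bigoplus_{Y' \in J} \R_{Y'}^p[j_{Y'}]$ in $\mathcal{B}(X)$. Because localizing subcategories are closed under countable coproducts and under direct summands (the latter via an Eilenberg-swindle argument using only countable coproducts), each $\R_{Y'}^p$ belongs to $\mathcal{L}$. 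Now $\mathrm{FK}_Y(B) \ne 0$ forces some $Z \in J$ to satisfy $\mathrm{FK}_Y(\R_Z^p) \ne 0$, which by the description of the maximal box means $Y \in B_Z$. For every $V \in B_Z$, Remark \ref{local-gener} together with the equivalence $\K_*(\R_Z(W)) \ncong 0 \Leftrightarrow \K_*(\R_Z^p(W)) \ncong 0$ noted in the lemma gives $\mathrm{FK}_V(\R_Z^p) \ne 0$; since this group is an $\mathbb{F}_p$-vector space and $\R_Z^p \in \mathcal{L}$, we obtain $p \in \mathrm{U}_V^{\mathcal{L}}$, completing the argument.

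The main technical point I anticipate is the passage from the module-level decomposition of Corollary \ref{split} to an honest decomposition of $B$ in $\mathcal{B}(X)$: one must verify that the countable coproduct $\bigoplus_{Y' \in J} \R_{Y'}^p[j_{Y'}]$ exists in $\mathcal{B}(X)$ and that its filtrated $\K$-theory still admits a projective resolution of length one, so that the uniqueness part of Corollary \ref{uct-cor} is applicable. Once this and the standard thickness of localizing subcategories are in hand, the extraction of the witness $Z$ is straightforward bookkeeping.
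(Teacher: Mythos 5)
Your proof is correct and mirrors the paper's argument: the paper's $\mathrm{cone}(A \xrightarrow{\tilde{p}} A)$ is precisely your $B = A \otimes \kappa(p)$, and the sequence K\"unneth $\to$ Corollary~\ref{split} $\to$ Corollary~\ref{uct-cor} $\to$ summand extraction is the same. The technical point you flag at the end is exactly what the paper invokes Theorem~\ref{proj-res} for: $\mathrm{FK}(B)$ is exact (being the filtrated $\K$-theory of an honest object), hence automatically admits a length-one projective resolution, so Corollary~\ref{uct-cor} applies; and $\mathcal{B}(X)$ inherits the countable coproducts of $\mathfrak{KK}(X)$ by definition of a localizing subcategory.
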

 
 \begin{proof}
  
  First assume $p \in \bigcup_ {Z: Y \in B_Z} \bigcap_{ V \in B_Z} \mathrm{U}_V^\mathcal{L}.$ Then $p \in \bigcap_{V \in B_Z} \mathrm{U}_V^\mathcal{L}$ for some~$Z$ with $Y \in B_Z$. But then $\mathrm{U}_Y^{\mathcal{L}}$ is itself in this intersection. Thus $p \in \mathrm{U}_Y^{\mathcal{L}}$.
  
  Now take $p \in \mathrm{U}_Y^\mathcal{L}$. By definition, there is $A \in \mathcal{L}$ with $p \in \supp_\mathbb{Z}\K_*(A(Y))$. Lemma~\ref{iff} implies that $\text{cone}(A(Y)\xrightarrow{\tilde{p}}A(Y)) \cong A(Y) \otimes \kappa(p) \ncong 0$. This implies
  that $\text{cone}(A\xrightarrow{\tilde{p}}A) \ncong 0$ because $\text{cone}(A\xrightarrow{\tilde{p}}A)(Y) \cong \text{cone}(A(Y)\xrightarrow{\tilde{p}}A(Y)).$ 
  However, $\text{FK}(\text{cone}(A\xrightarrow{\tilde{p}}A))(Z) \cong \K_*(\text{cone}(A\xrightarrow{\tilde{p}}A)(Z))
  \cong \K_*(A(Z)\otimes \kappa(p))$ is an $\mathbb{F}_p$-vector space for any $Z \in \LCX$ and $p\in \Spec \mathbb{Z}$
  because the classical K{\"u}nneth sequence for $\K$-theory splits. Thus, by Corollary \ref{split}, there exists a multiset $I\subseteq \LCX$ such that $\text{FK}(\text{cone}(A\xrightarrow{\tilde{p}}A)) \cong \bigoplus_{Z \in I} \text{FK}(\R_Z^p) \cong  \text{FK}(\bigoplus_{Z \in I}\R_Z^p).$

Now we can use Corollary \ref{uct-cor} and Theorem \ref{proj-res} to lift the isomorphism of filtrated K-theories to an isomorphism in $\mathcal{B}(X)$. In other words, $\text{cone}(A\xrightarrow{\tilde{p}}A) \cong \bigoplus_{Z \in I}\R_Z^p$.
Since $\K_*\big(\text{cone}(A\xrightarrow{\tilde{p}}A)(Y)\big)\ncong 0$, there is $ Z \in I$ such that $\R_Z^p(Y) \ncong 0$. Since $\mathcal{L}$ is localizing, it contains all the direct summands of its objects. Thus $\R_Z^p \in \mathcal{L}$.

Since $\R_Z^p (V) = (\R_Z \otimes \kappa(p))(V) \cong \R_Z(V) \otimes \kappa(p)$, the following implications hold for any $V \in \LCX$: 
\begin{align*}
V \in B_Z & \iff \K_*(\R_Z^p (V))\ncong 0 \\
& \iff \K_*(\R_Z^p (V)) \text{  is isomorphic to  } \mathbb{F}_p[i],\, i \in \mathbb{Z}/2 \\
& \implies p\in \supp_\mathbb{Z}(\K_*(\R_Z^p (V)) \\
& \implies p \in \mathrm{U}_V^{\mathcal{L}}, 
\end{align*}  
where the first two equivalences hold because $\K_*(\R_Z (V)\otimes \kappa(p)) \cong \K_*(\R_Z (V))\otimes \K_*(\kappa(p))$  
by to the K{\"u}nneth formula and because $\K_*(\R_Z (V)) \cong \mathbb{Z}[i]$ for some $i \in \mathbb{Z}/2$.
In particular, these implications mean that $p \in \bigcap_{V \in B_Z} \mathrm{U}_V^\mathcal{L},$ and since $Y \in B_Z$,
we get 
\begin{equation*}
p \in \bigcup_ {Z: Y \in B_Z} \bigcap_{ 
   V \in B_Z} \mathrm{U}_V^\mathcal{L}. \qedhere
   \end{equation*}
  \end{proof}

\begin{remark} \label{doIneed}
Since $\mathrm{U}_Y^\mathcal{L}$ is itself in every intersection over which we are taking the unions in $\bigcup_ {Z, Y\in B_Z} \bigcap_{V \in B_Z} \mathrm{U}_V^\mathcal{L}$, we can factor it out and get $$\mathrm{U}_Y^{\mathcal{L}} = \mathrm{U}_Y^{\mathcal{L}} \cap
  \bigcup_{\substack{Z \in \LCX: \\ Y \in B_Z}} \bigcap_{\substack{ V \neq Y 
  \\ V \in B_Z}} \mathrm{U}_V^\mathcal{L}.$$ Therefore, Lemma \ref{u} is equivalent to
  
  $$\mathrm{U}_Y^{\mathcal{L}} \subseteq
  \bigcup_{\substack{Z \in \LCX: \\ Y \in B_Z}} \bigcap_{\substack{V \neq Y
  \\ V \in B_Z}} \mathrm{U}_V^\mathcal{L}.$$
\end{remark}

\begin{definition}
 For a localizing subcategory $\mathcal{L} \subseteq \mathcal{B}(X)$ and $Y \in\LCX$, define $\mathrm{V}_Y^\mathcal{L} \subseteq \Spec \mathbb{Z}$
 by $$\mathrm{V}_Y^\mathcal{L}:=\{ p \in \Spec \mathbb{Z} \mid \R_Y^p \in \mathcal{L} \}.$$
 
\end{definition}

\begin{lemma} \label{vgene}
For any localizing subcategory $\mathcal{L} \subseteq \mathcal{B}(X)$, $$\mathcal{L} \cong \langle \R_Y^p \mid p \in \mathrm{V}_Y^{\mathcal{L}},\, Y \in \LCX \rangle.$$
\end{lemma}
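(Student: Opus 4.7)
The plan is to prove both inclusions of the stated isomorphism separately. The direction $\langle \R_Y^p \mid p \in \mathrm{V}_Y^\mathcal{L},\, Y \in \LCX \rangle \subseteq \mathcal{L}$ is immediate from the definition of $\mathrm{V}_Y^\mathcal{L}$: every proposed generator already lies in $\mathcal{L}$, and $\mathcal{L}$ is itself localizing.

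For the nontrivial direction, I would take an arbitrary $A \in \mathcal{L}$ and first apply Lemma \ref{gener} to reduce to showing that $A \otimes \kappa(p) \in \langle \R_Y^q \mid q \in \mathrm{V}_Y^\mathcal{L},\, Y \in \LCX \rangle$ for every fixed $p \in \Spec \Z$. By Lemma \ref{ideal}, $A \otimes \kappa(p)$ is still an object of $\mathcal{L}$. The key observation is that $(A \otimes \kappa(p))(Y) \cong A(Y) \otimes \kappa(p)$, so by the classical splitting of the Künneth sequence, $\K_*\bigl((A \otimes \kappa(p))(Y)\bigr)$ is an $\mathbb{F}_p$-vector space for every $Y \in \LCX$. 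Corollary \ref{split} then furnishes a countable multiset $J \subseteq \LCX$ together with shifts $j_Z \in \Z/2$ yielding an isomorphism
\[
A \otimes \kappa(p) \;\cong\; \bigoplus_{Z \in J} \R_Z^p[j_Z]
\]
in $\mathcal{B}(X)$; the lift from $\mathfrak{Mod}(\mathcal{NT})_c$ back to $\mathcal{B}(X)$ is supplied by Corollary \ref{uct-cor} together with Theorem \ref{proj-res}.

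With this decomposition in hand, I would invoke the standard fact that every localizing subcategory is thick (via the Eilenberg swindle, which is available because $\mathcal{B}(X)$ admits countable coproducts) to conclude that each summand $\R_Z^p$ lies in $\mathcal{L}$. By the very definition of $\mathrm{V}_Z^\mathcal{L}$ this means $p \in \mathrm{V}_Z^\mathcal{L}$ for every $Z \in J$, so $A \otimes \kappa(p)$ is a coproduct of objects drawn from the prescribed generating set, and the argument closes.

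The main obstacle is really only the existence of the splitting $A \otimes \kappa(p) \cong \bigoplus_{Z \in J} \R_Z^p[j_Z]$. This is the technical heart of the proof, even though it has been packaged into Corollary \ref{split}: it depends on the interplay of three inputs, namely the exactness of $\mathrm{FK}(A \otimes \kappa(p))$ (Theorem \ref{proj-res}), the pointwise $\mathbb{F}_p$-linearity coming from Künneth, and the uniqueness clause in Corollary \ref{uct-cor} that transports the module-theoretic decomposition back to a genuine $\mathcal{B}(X)$-isomorphism. Once this step is accepted, everything else is formal.
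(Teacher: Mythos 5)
Your proposal is correct and follows essentially the same route as the paper: after the trivial inclusion, it reduces via Lemma \ref{gener} to $A\otimes\kappa(p)$, decomposes this as $\bigoplus_{Z\in J}\R_Z^p[j_Z]$ using Corollary \ref{split} together with Corollary \ref{uct-cor} and Theorem \ref{proj-res}, and uses closure under direct summands to place each $\R_Z^p$ in $\mathcal{L}$. The paper merely compresses the middle step by citing the identical decomposition already established in the proof of Lemma \ref{u}, whereas you rederive it in place.
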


\begin{proof}
$\langle \R_Y^p \mid p \in \mathrm{V}_Y^{\mathcal{L}},\, Y \in \LCX \rangle \subseteq \mathcal{L}$ by definition.

Now if $A \in \mathcal{L}$, then $A \in  \langle A\otimes \kappa(p)\mid p \in \Spec \mathbb{Z} \rangle$ by Lemma \ref{gener}. But as was shown in the proof of Lemma \ref{u}, $A\otimes \kappa(p)\cong \text{cone}(A\xrightarrow{\tilde{p}}A)\cong \bigoplus_{Z \in I}\R_Z^p$ with $\R_Z^p \in \mathcal{L}$ for $Z \in I$. Thus also $A \in  \langle \R_Y^p \mid p \in \mathrm{V}_Y^{\mathcal{L}},\, Y \in \LCX \rangle$.
\end{proof}

By Lemma \ref{vgene}, specifying the sets $\mathrm{V}_Y^\mathcal{L} \subseteq \Spec \Z$ for all $Y \in \LCX$ completely determines the localizing subcategory $\mathcal{L}$. Our aim is to show that the sets $\mathrm{U}_Y^\mathcal{L}$ for all $Y \in \LCX$ determine the sets $\mathrm{V}_Y^\mathcal{L}$,
and thus  $\mathcal{L}$ itself. However, in order to show this, we  first need to prove some preliminary statements.

\begin{lemma}
 \label{gen}
 Let $Y,V,W \in \LCX$. If $Y$ equals $V \cap W$ or $V \cup W$ or $V \setminus W$, then $\R_Y \in \langle \R_V, \R_W \rangle.$
\end{lemma}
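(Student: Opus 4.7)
The plan is to deduce the lemma from a single elementary building block together with a case analysis on the relative position of $V=[a,b]$ and $W=[c,d]$ (without loss of generality $a \leq c$), with the octahedral axiom handling the one nontrivial configuration. The building block---which I shall call the \emph{subdivision triangle}---arises as follows: for any $Z=[e,f]\in\LCX$ and any $e \leq g < f$, the subset $[e,g]$ is closed in $Z$ and $[g+1,f]$ is open in $Z$, so the extension $A([g+1,f]) \hookrightarrow A([e,f]) \twoheadrightarrow A([e,g])$ of $X$-C*-algebras (natural in the $X$-C*-algebra $A$) yields, via the 6-term exact sequence in $\K$-theory and Theorem~\ref{repres}, a distinguished triangle
\[
\R_{[e,g]} \longrightarrow \R_{[e,f]} \longrightarrow \R_{[g+1,f]} \longrightarrow \Sigma\R_{[e,g]}
\]
in $\mathcal{B}(X)$.

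In the easy cases---$V$ and $W$ disjoint and adjacent, or nested with a shared endpoint---one subdivision triangle suffices. For example, if $c=b+1$ then $V \cup W = V \sqcup W$ yields $\R_V \to \R_{V\cup W} \to \R_W$ directly; if $a=c$ with $d<b$, then the decomposition $V = W \sqcup (V\setminus W)$ yields $\R_W \to \R_V \to \R_{V\setminus W}$; and the remaining nested sub-cases (including shared right endpoint, $W\subseteq V$ with $a<c$ and $b=d$, etc.) are handled analogously. In every such configuration the target $\R_{V\cap W}$, $\R_{V\cup W}$, or $\R_{V\setminus W}$ either coincides with $\R_V$ or $\R_W$, or appears as an outer vertex of an explicit subdivision triangle.

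The main case is proper overlap $a < c \leq b < d$, in which $V\cap W=[c,b]$, $V\cup W=[a,d]$, $V\setminus W=[a,c-1]$, and $W\setminus V=[b+1,d]$. Here I would apply the subdivision triangle to $V = [a,c-1] \sqcup [c,b]$ and to $W = [c,b] \sqcup [b+1,d]$, obtaining
\[
\R_{V\setminus W} \to \R_V \to \R_{V\cap W} \qquad\text{and}\qquad \R_{V\cap W} \to \R_W \to \R_{W\setminus V},
\]
sharing the common vertex $\R_{V\cap W}$, and then invoke the octahedral axiom on the composition $\R_V \to \R_{V\cap W} \to \R_W$ to produce the fourth triangle
\[
\Sigma\R_{V\setminus W} \longrightarrow \mathrm{cone}(\R_V \to \R_W) \longrightarrow \R_{W\setminus V} \longrightarrow \Sigma^2\R_{V\setminus W},
\]
whose middle object lies in $\langle \R_V,\R_W\rangle$ by construction. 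The connecting morphism belongs to $\mathrm{KK}_0(X;\R_{[b+1,d]},\R_{[a,c-1]}) \cong \K_*(\R_{[a,c-1]}([b+1,d]))$, and by~(\ref{conditions}) this group vanishes: the first case requires $b+1 \leq a$, impossible as $a<c\leq b$, and the second requires $b \leq c-1$, contradicting $c \leq b$. Hence the triangle splits as $\mathrm{cone}(\R_V \to \R_W) \cong \Sigma\R_{V\setminus W} \oplus \R_{W\setminus V}$, and closure of localizing subcategories under direct summands places both $\R_{V\setminus W}$ and $\R_{W\setminus V}$ in $\langle \R_V,\R_W\rangle$. Feeding these back into the two original triangles, and into the additional subdivision triangle $\R_V \to \R_{V\cup W} \to \R_{W\setminus V}$ coming from $[a,d] = V \sqcup [b+1,d]$, then delivers $\R_{V\cap W}, \R_{V\cup W} \in \langle \R_V,\R_W\rangle$.

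The hard part will be the splitting step in the proper overlap case: the entire argument turns on the vanishing $\mathrm{KK}_*(X;\R_{W\setminus V},\R_{V\setminus W}) = 0$, which is a purely combinatorial consequence of~(\ref{conditions}) and the geometric fact that the ``outer'' intervals $V\setminus W$ and $W\setminus V$ of a properly overlapping pair lie on opposite sides of $V\cap W$ and are therefore incomparable in the precise sense detected by~(\ref{conditions}).
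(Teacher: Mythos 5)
Your proof is correct, and it takes a genuinely different route through the octahedral axiom than the paper does. Both proofs use the same building block (the triangle $\Sigma\R_U \to \R_{Z\setminus U} \to \R_Z \to \R_U$ for $U$ open in $Z$, coming from the semi-split extension $\R_{Z\setminus U}\hookrightarrow\R_Z\twoheadrightarrow\R_U$), and both invoke the octahedral axiom in the proper-overlap case. But the paper composes \emph{along the common subobject} $\R_{V\setminus W}$: it uses the homotopy-cartesian-square form of the axiom on the two triangles with shared ideal $\R_{V\setminus W}$ to produce a Mayer--Vietoris triangle $\Sigma\R_W \to \R_V \to \R_{V\cup W}\oplus\R_{V\cap W} \to \R_W$, from which $\R_{V\cup W}$ and $\R_{V\cap W}$ drop out as summands, and then $\R_{V\setminus W}$, $\R_{W\setminus V}$ from the remaining triangles. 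You instead compose \emph{through the common middle piece} $\R_{V\cap W}$ and use the cone-of-a-composite form of the axiom, so that the two outer intervals $\Sigma\R_{V\setminus W}$ and $\R_{W\setminus V}$ appear as the outer terms of the octahedral triangle; to split that triangle you need the extra observation that $\mathrm{KK}_0(X;\R_{W\setminus V},\R_{V\setminus W})\cong\K_*(\R_{[a,c-1]}([b+1,d]))=0$, which you correctly extract from~\eqref{conditions}. The trade-off is that the paper's argument is purely formal (no $\K$-theory computation is needed, only the structure of the four triangles), while yours needs the vanishing from~\eqref{conditions} but then gets a clean split and delivers the two "difference" generators in one stroke; each then recovers the other two generators from the remaining subdivision triangles. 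Both are complete. One minor remark: your case division under the normalization $a\le c$ should explicitly include the disjoint non-adjacent case and the properly nested case (where $V\cup W$, resp.\ $V\setminus W$, leaves $\LCX$), but as you implicitly note these are vacuous or immediate because the only admissible $Y$ is then $V$ or $W$ itself; the paper disposes of them in its opening paragraph.
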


\begin{proof}

First, say $V \cup W \notin \LCX$. This implies $V \setminus W=V$ and $V \cap W = \emptyset$, trivially giving the assertion. The same way, if $V \setminus W \notin \LCX$, we must have $W \subset V$, thus $V \cup W = V$ and $V \cap W =W$, giving the result. Similarly, the assertion is trivial if $W \setminus V \notin \LCX$. So we assume $V \cup W,\;V \setminus W, \;W \setminus V  \in \LCX$. Write $V=[v_1,v_2]$ and $W=[w_1,w_2]$. Without loss of generality, we can also assume $v_1 \leq w_1,\; v_2 \leq w_2$ by exchanging $V$ and $W$ if necessary. However, since we sacrificed the symmetry, we have to prove the lemma for $Y=W \setminus V$ as well.

Let $Z \in \LCX$ and $U \in \mathbb{O}(Z)$. Since $\R$ is a $\mathrm{C}^*$-algebra over $X^\text{op}\times X$, there is a semi-split extension by *-homomorphisms $$\R_{Z \setminus U} \hookrightarrow \R_Z \twoheadrightarrow \R_U$$ which produces an exact triangle
$$\Sigma\R_U \rightarrow  \R_{Z \setminus U} \rightarrow \R_Z \rightarrow \R_U $$ in $\mathfrak{KK}(X)$.

Since $W\setminus V= [v_2+1,w_2]$ is open in $V \cup W=[v_1,w_2]$, $V\cap W=[w_1,v_2]$ is open in $V=[v_1,v_2]$ and $W=[w_1,w_2]$ is open in $V \cup W=[v_1,w_2]$, we get the following exact triangle 

$$ \Sigma \R_{W \setminus V} \rightarrow \R_{V} \rightarrow \R_{V \cup W} \rightarrow \R_{W \setminus V} $$ along with two exact triangles fitting in a commutative square

\begin{displaymath}
    \xymatrix{ \Sigma \R_{V \cap W} \ar[r] & \R_{V \setminus W} \ar[r] \ar@{=}[d] & \R_V \ar[d] \ar[r] & \R_{V \cap W}  &\\
               \Sigma \R_W \ar[r] & \R_{V \setminus W} \ar[r] & \R_{V \cup W} \ar[r] & \R_W & }
\end{displaymath}

By the octahedral axiom, there exists a map $\R_{V \cap W} \rightarrow \R_W$ such that the third square in the above diagram will be homotopy
cartesian; in other words, there is an exact triangle 
$$\Sigma \R_W \rightarrow \R_V \rightarrow \R_{V \cup W}\oplus \R_{V \cap W} \rightarrow \R_W $$ 
These four triangles show that $\R_{W \setminus V}, \R_{V \cap W},\R_{V \cup W},\R_{V \setminus W} \in \langle \R_V, \R_W \rangle$.
\end{proof}

Now we proceed to prove the key proposition.

\begin{proposition}
\label{uv}
 For a localizing subcategory $\mathcal{L} \subseteq \mathcal{B}(X)$, we have $$\mathrm{V}_Y^\mathcal{L}=\bigcap_{ 
  Z \in B_Y}\mathrm{U}_Z^\mathcal{L}.$$
\end{proposition}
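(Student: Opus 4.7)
The plan is to prove the two inclusions separately. The inclusion $\mathrm{V}_Y^\mathcal{L} \subseteq \bigcap_{Z \in B_Y} \mathrm{U}_Z^\mathcal{L}$ is immediate: if $\R_Y^p \in \mathcal{L}$, then for any $Z \in B_Y$ Remark~\ref{local-gener} gives $\K_*(\R_Y^p(Z)) \cong \mathbb{F}_p[i] \neq 0$, so $p \in \supp_\mathbb{Z}\K_*(\R_Y^p(Z))$, and $\R_Y^p$ itself is the witness placing $p$ in $\mathrm{U}_Z^\mathcal{L}$.

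For the reverse inclusion, the plan is to recycle the splitting argument from the proof of Lemma~\ref{u}. Given $p \in \bigcap_{Z \in B_Y} \mathrm{U}_Z^\mathcal{L}$, for each $Z \in B_Y$ pick $A_Z \in \mathcal{L}$ with $p \in \supp_\mathbb{Z}\K_*(A_Z(Z))$. By Lemma~\ref{ideal} we have $A_Z \otimes \kappa(p) \in \mathcal{L}$, and Corollary~\ref{split} together with the uniqueness in Corollary~\ref{uct-cor} yields an isomorphism $A_Z \otimes \kappa(p) \cong \bigoplus_{W \in I_Z} \R_W^p[j_W]$ in $\mathcal{B}(X)$, so each summand $\R_W^p$ belongs to $\mathcal{L}$. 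Lemma~\ref{iff} forces $(A_Z \otimes \kappa(p))(Z) \neq 0$, hence some $W_Z \in I_Z$ satisfies $Z \in B_{W_Z}$. Setting $\mathcal{W} := \{W \in \LCX : \R_W^p \in \mathcal{L}\}$, what we have collected is the box-covering condition $\bigcup_{W \in \mathcal{W}} B_W \supseteq B_Y$.

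It remains to show that this covering upgrades to $Y \in \mathcal{W}$, i.e.\ $\R_Y^p \in \mathcal{L}$. Tensoring the four exact triangles constructed in the proof of Lemma~\ref{gen} with $\kappa(p)$ delivers the $p$-localised version of that lemma, so $\mathcal{W}$ is closed under intersections, unions and relative complements of intervals whenever the result stays in $\LCX$. The natural first move is to apply the covering hypothesis to $Z = Y$ itself, producing $W_Y \in \mathcal{W}$ with $Y \in B_{W_Y}$; conditions~\eqref{conditions} pin $W_Y$ down to be either $Y$ (in which case we are done) or a specific shift or extension of $Y$. In the remaining cases one combines $W_Y$ with further witnesses from $\mathcal{W}$, selected by applying the covering hypothesis to those $Z \in B_Y$ which lie in the ``gap'' between $W_Y$ and $Y$, and then carves $Y$ out via the Boolean operations of Lemma~\ref{gen}. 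The main obstacle is carrying out this combinatorial case analysis uniformly: each $B_W$ is a two-rectangle region inside the invariant triangle of Figure~\ref{fig:1}, and the different possible placements of $W_Y$ relative to $Y$ call for slightly different manipulations. An induction on the length $b_0 - a_0$ of $Y = [a_0, b_0]$, or on $|B_Y|$, appears to be the right organising principle, reducing each case to an instance of the proposition for a shorter or disjoint subinterval for which a smaller instance of the covering condition is inherited from the one at $Y$.
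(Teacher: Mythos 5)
Your two halves divide exactly as the paper's proof does, and the easy inclusion $\mathrm{V}_Y^\mathcal{L} \subseteq \bigcap_{Z \in B_Y} \mathrm{U}_Z^\mathcal{L}$ is correct and essentially identical. For the converse you also recover the correct setup: translating $p \in \bigcap_{Z \in B_Y}\mathrm{U}_Z^\mathcal{L}$ into the covering $\bigcup_{W\in\mathcal{W}} B_W \supseteq B_Y$ with $\mathcal{W} = \{W : \R_W^p \in \mathcal{L}\}$, observing that $\mathcal{W}$ is closed under the Boolean operations of Lemma~\ref{gen} (tensored with $\kappa(p)$), and reducing to showing $\R_Y \in \langle\R_W \mid W \in \mathcal{W}\rangle$.

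However, the actual combinatorial upgrade from the covering condition to $Y\in\mathcal{W}$ is left as a sketch, and the organising principle you propose has a concrete obstruction. You suggest an induction on the length of $Y$ (or on $|B_Y|$), with the justification that ``a smaller instance of the covering condition is inherited from the one at $Y$.'' This is false in general: for $Y' \subsetneq Y$ one does \emph{not} have $B_{Y'} \subseteq B_Y$, so the hypothesis $\bigcup_{W\in\mathcal{W}} B_W \supseteq B_Y$ does not imply $\bigcup_{W\in\mathcal{W}} B_W \supseteq B_{Y'}$. For instance with $X=\{1,2,3\}$, $Y=[1,3]$, $Y'=[2,2]$, conditions~\eqref{conditions} give $B_{[1,3]}=\{[1,1],[1,2],[1,3]\}$ but $B_{[2,2]}=\{[1,2],[2,2],[3,3]\}$, so $[2,2],[3,3]\in B_{Y'}\setminus B_Y$. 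The induction, as stated, therefore cannot simply pass down to a shorter interval. A related imprecision is the claim that $Y\in B_{W_Y}$ ``pins $W_Y$ down'' to $Y$ or a specific shift/extension; by \eqref{conditions} the set of possible $W_Y$ is a full two-parameter family, so the case split is substantially larger than suggested.

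The paper avoids the induction pitfall altogether by a different combinatorial device: for each point $i\in Y$, it first shows (via $[1,i]\in B_Y$) that some $W\in J$ contains $i$, then introduces the interval $M^i$ of \emph{minimal length} containing $i$ with $\R_{M^i}\in\langle\R_W\mid W\in J\rangle$ (unique by Lemma~\ref{gen} applied to intersections), and finally proves $M^i\subseteq Y$ by deriving a contradiction with minimality if either endpoint of $M^i$ escapes $Y$. With $Y=\bigcup_{i\in Y}M^i$ established, $\R_Y\in\langle\R_W\mid W\in J\rangle$ follows from Lemma~\ref{gen}. This minimality argument is the genuine missing step in your proposal; without it or an alternative of comparable precision, the reverse inclusion is not proved.
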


\begin{proof}
If $p \in \mathrm{V}_Y^{\mathcal{L}}$, then $\R_Y^p \in \mathcal{L}$ by definition. Also, exactly as for Lemma  \ref{u},
\begin{align*}
Z \in B_Y & \iff \K_*(\R_Y^p (Z))\ncong 0 \\
& \iff \K_*(\R_Y^p (Z)) \text{  is isomorphic to  } \mathbb{F}_p[i]\text{  for  }  i \in \mathbb{Z}/2 \\
& \implies p\in \supp_\mathbb{Z}(\K_*(\R_Y^p (Z)) \\
& \implies p \in \mathrm{U}_Z^{\mathcal{L}}. 
\end{align*}  
Thus $p \in  \bigcap_{Z \in B_Y}\mathrm{U}_Z^\mathcal{L}.$

The opposite inclusion needs more work. Let $p \in \bigcap_{Z \in B_Y}\mathrm{U}_Z^\mathcal{L}.$ As in the proof of Lemma \ref{u}, this means that for any $Z \in \LCX$ with $Z \in B_Y$, there exists $W \in \LCX$ with $Z \in B_W$ and $\R_W^p \in \mathcal{L}$. Let $J \subseteq \LCX$ be the set of all such $W$'s. Tensoring with $\kappa(p)$ is an exact functor and commutes with coproducts. So
$\R_Y \in \langle \R_W \mid W\in J \rangle $ implies $\R_Y^p \in \langle \R_W^p \mid W\in J \rangle \subseteq \mathcal{L}$ and thus $p \in \mathrm{V}_Y^\mathcal{L}$. Therefore, it suffices to prove $\R_Y \in \langle \R_W \mid W\in J \rangle $.
 
 First, we show that $Y$ is covered by intervals in $J$. Let $Y=[a,b]$. For any $i \in [a,b]$, by (\ref{conditions}), we have
 $[1,i] \in B_{[a,b]}$ because $1 \leq a\leq i \leq b$. So we know that there exists $W \in J$ with $[1,i] \in B_{W}$. Let $W=[a_1,b_1]$. Since $[1,i] \in B_{[a_1,b_1]}$, again by (\ref{conditions}), there is only one possibility, namely
 $1 \leq a_1\leq i \leq b_1$, which means $i \in W$.

Now, let $M^i$ be the interval of minimal length such that $i \in M^i$ and $\R_{M^i} \in \langle \R_W \mid W\in J \rangle$. Such an interval is unique; if $N^i$ is another interval with the same properties, then $i \in M^i\cap N^i$, $\R_{M^i\cap N^i} \in \langle \R_W \mid W\in J \rangle$ by Lemma \ref{gen} and $|M^i\cap N^i| < |M^i|$, contradicting minimality.
 
We want to demonstrate that $M^i \subseteq Y$; because then $Y=\bigcup_{j \in Y}M^j$, and by Lemma \ref{gen}, $\R_Y \in \langle \R_W \mid W\in J \rangle$, concluding the proof of the proposition.
 
Let $M^i= [k,l]$. Assume $k < a$. Now, by (\ref{conditions}), $[k+1,i] \in B_{[a,b]}$ because $k+1 \leq a \leq i \leq b$. Therefore, there exists $W \in J$ with $[k+1,i] \in B_W$. Let $W= [c,d]$. Again by (\ref{conditions}), we have two possibilities:

\begin{description}[font=\normalfont \textit]
  \item[Case 1] $k+1\leq c \leq i \leq d$. Then $[c,d] \cap [k,l] =[c, \min\{d,l\}]$, and thus $\R_{[c, \min\{d,l\}]} \in \langle \R_W \mid W\in J \rangle$ by Lemma \ref{gen}. But $c\leq i \leq \min\{d,l\}$, thus $i \in [c,d] \cap [k,l]$. Moreover, $|[c,d] \cap [k,l]| < |[k,l]|$ because $k<c$ and $\min\{d,l\} \leq l$; this contradicts the minimality of $[k,l]$.
  
  \item[Case 2] $c<k+1,\,d<i,\,k \leq d$. Then $[k,l] \setminus [c,d] =[d+1, l]$ because $c \leq k,\, d<i \leq l$. Thus $\R_{[d+1, l]} \in \langle \R_W \mid W\in J \rangle$ by Lemma \ref{gen}. Since $d+1 \leq i \leq l$, $i \in [d+1,l]$. Moreover, $ |[d+1,l]| < |[k,l]|$ because $k<d+1$; this contradicts the minimality of $[k,l]$.
\end{description}
We conclude that $a \leq k$. Assume $b < l$. Now, by (\ref{conditions}), $[i+1,l] \in B_{[a,b]}$ because $a < i+1, \, b<l,\, i \leq b$. Therefore, there exists $W=[c,d] \in J$ with $[i+1,l] \in B_{[c,d]}$. Again, there are two cases to consider:
 
 \begin{description}[font=\normalfont \textit]
  \item[Case 1] $i+1\leq c \leq l \leq d$. Then $[k,l] \setminus [c,d] =[k, c-1]$ because $k < c,\, l\leq d $. Thus $\R_{[k, c-1]} \in \langle \R_W \mid W\in J \rangle$ by Lemma \ref{gen}. Since $k \leq i \leq c-1$, $i \in [k,c-1]$. Moreover, $ |[k,c-1]| < |[k,l]|$ because $c-1<l$; this contradicts the minimality of $[k,l]$.
  
  \item[Case 2] $c<i+1,\,d<l,\,i \leq d$. Then $[c,d] \cap [k,l] =[\max\{k,c\},d]$, and thus $\R_{[\max\{k,c\},d]} \in \langle \R_W \mid W\in J \rangle$ by Lemma \ref{gen}. But $\max\{k,c\} \leq i \leq d$, thus $i \in [c,d] \cap [k,l]$. moreover, $|[c,d] \cap [k,l]| < |[k,l]|$ because $d<l$; this contradicts the minimality of $[k,l]$.
\end{description}
 Finally, we have $a \leq k\leq l \leq b$; that is, $M^i \subseteq Y$. This finishes the proof of the proposition.
 \end{proof}
 
 Now we are ready to prove the main theorem of this section. We will restate it by concretely constructing the isomorphism. Let $m=|\LCX|$ be the number of non-empty intervals in $X$; that is, if $X$ has $n$ points, $m=\frac{n(n+1)}{2}$. 
 
\begin{theorem}
\label{maintt}
There is an inclusion-preserving isomorphism between localizing subcategories of $\mathcal{B}(X)$ and those elements $(\mathrm{U}_{Y_1}, \dots ,\mathrm{U}_{Y_m}) \in \mathcal{P}(\Spec \mathbb{Z})^m$ of the $m$-fold Cartesian product of subsets of the Zariski spectrum of the ring of integers, labeled by intervals  $Y_i \subseteq X$, which satisfy $\mathrm{U}_{Y_i} = \bigcup_{j, Y_i \in B_{Y_j}} \bigcap_{
 Y_k \in B_{Y_j}} \mathrm{U}_{Y_k}$ for all $i=1,\dots,m.$ The isomorphism and its inverse map are given by

\begin{align*}
 \mathcal{L}   & \longmapsto  \{ \mathrm{U}_{Y_i}^\mathcal{L} \}_{i=1}^m  \\
 \makebox[55pt][r]{$\displaystyle \langle \R_{Y_i}^p \mid p \in \bigcap_{Y_j \in B_{Y_i}} \mathrm{U}_{Y_j}, i=1,\dots,m  \rangle$} & \longmapsfrom \{ \mathrm{U}_{Y_i} \}_{i=1}^m.
 \end{align*}
 \end{theorem}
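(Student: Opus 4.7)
The plan is to show that the two maps in the statement, call them $\Phi\colon \mathcal{L} \mapsto (\mathrm{U}_{Y_i}^{\mathcal{L}})_{i=1}^m$ and $\Psi\colon (\mathrm{U}_{Y_i}) \mapsto \langle \R_{Y_i}^p \mid p \in \bigcap_{Y_j \in B_{Y_i}} \mathrm{U}_{Y_j},\, i=1,\dots,m\rangle$, are mutually inverse inclusion-preserving bijections. Well-definedness of $\Phi$ (that its image satisfies the constraint) is exactly Lemma~\ref{u} after reindexing the union and intersection over the enumeration $Y_1,\dots,Y_m$ of $\LCX$, and inclusion-preservation on both sides is immediate from the definitions.

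For $\Psi \circ \Phi = \mathrm{id}$ on localizing subcategories, I would start from $\mathcal{L}$ and use Proposition~\ref{uv} to identify $\bigcap_{Y_j \in B_{Y_i}} \mathrm{U}_{Y_j}^{\mathcal{L}}$ with $\mathrm{V}_{Y_i}^{\mathcal{L}}$, so that $\Psi(\Phi(\mathcal{L})) = \langle \R_{Y_i}^p \mid p \in \mathrm{V}_{Y_i}^{\mathcal{L}},\, i=1,\dots,m\rangle$. Since $\{Y_i\}_{i=1}^m$ enumerates $\LCX$, this equals $\mathcal{L}$ by Lemma~\ref{vgene}.

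I expect the main obstacle to be $\Phi \circ \Psi = \mathrm{id}$. Fix a tuple $(\mathrm{U}_{Y_i})$ satisfying the constraint, abbreviate $\mathrm{V}_{Y_i} := \bigcap_{Y_j \in B_{Y_i}} \mathrm{U}_{Y_j}$, and let $\mathcal{L} := \Psi(\mathrm{U})$; the goal is $\mathrm{U}_{Y_i}^{\mathcal{L}} = \mathrm{U}_{Y_i}$ for each $i$. For the inclusion $\mathrm{U}_{Y_i} \subseteq \mathrm{U}_{Y_i}^{\mathcal{L}}$, given $p \in \mathrm{U}_{Y_i}$ the constraint yields $j$ with $Y_i \in B_{Y_j}$ and $p \in \mathrm{V}_{Y_j}$, making $\R_{Y_j}^p$ a generator of $\mathcal{L}$; Remark~\ref{local-gener} then gives $\K_*(\R_{Y_j}^p(Y_i)) \cong \mathbb{F}_p[\epsilon]$ for some $\epsilon \in \mathbb{Z}/2$, putting $p$ into $\supp_{\mathbb{Z}} \K_*(\R_{Y_j}^p(Y_i))$ and hence into $\mathrm{U}_{Y_i}^{\mathcal{L}}$.

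The reverse inclusion $\mathrm{U}_{Y_i}^{\mathcal{L}} \subseteq \mathrm{U}_{Y_i}$ is the delicate step; my strategy is to trap $\mathcal{L}$ inside the full subcategory
\[
\mathcal{N} := \{A \in \mathcal{B}(X) \mid \supp_{\mathbb{Z}} \K_*(A(Y)) \subseteq \mathrm{U}_Y \text{ for all } Y \in \LCX\}.
\]
First I would check that $\mathcal{N}$ is localizing: closure under suspension and countable coproducts follows because $\supp_{\mathbb{Z}}$ is stable under suspension and commutes with direct sums, while closure under triangles reduces, via the 6-term exact K-theory sequence applied at each $Y$ to the termwise extension $A(Y)\hookrightarrow B(Y)\twoheadrightarrow C(Y)$, to the basic observation that the support of the middle term of a short exact sequence of abelian groups is contained in the union of the supports of the outer terms. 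Next, each generator $\R_{Y_i}^p$ with $p \in \mathrm{V}_{Y_i}$ lies in $\mathcal{N}$: by Remark~\ref{local-gener}, $\supp_{\mathbb{Z}} \K_*(\R_{Y_i}^p(Y))$ is $\emptyset$ when $Y \notin B_{Y_i}$ and $\{p\}$ when $Y \in B_{Y_i}$, and the hypothesis $p \in \mathrm{V}_{Y_i} = \bigcap_{Y_j \in B_{Y_i}} \mathrm{U}_{Y_j}$ precisely guarantees $\{p\} \subseteq \mathrm{U}_Y$ in the latter case. Minimality of $\mathcal{L}$ then forces $\mathcal{L} \subseteq \mathcal{N}$, whence $\supp_{\mathbb{Z}} \K_*(A(Y_i)) \subseteq \mathrm{U}_{Y_i}$ for every $A \in \mathcal{L}$, finishing the argument.
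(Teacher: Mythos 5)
Your proof is correct and parallels the paper's proof for most of the structure: well-definedness of $\Phi$ via Lemma~\ref{u}, $\Psi\circ\Phi=\mathrm{id}$ via Proposition~\ref{uv} and Lemma~\ref{vgene}, and the inclusion $\mathrm{U}_{Y_i}\subseteq\mathrm{U}_{Y_i}^{\mathcal{L}}$ all run the same way. Where you genuinely diverge is the reverse inclusion $\mathrm{U}_{Y_i}^{\mathcal{L}}\subseteq\mathrm{U}_{Y_i}$. The paper proceeds by tracing generators: given $p\in\mathrm{U}_{Y_i}^{\mathcal{L}}$ it produces, via the decomposition in Lemma~\ref{u}, some $\R_{Y_j}^{p}\in\mathcal{L}$ not vanishing at $Y_i$, and then appeals rather tersely to the fact that the objects of $\mathcal{B}(X)$ vanishing at $Y_i$ form a localizing subcategory, concluding that some generator $\R_{Y_k}^{p}$ (with the \emph{same} prime $p$) must also fail to vanish at $Y_i$. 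That last step quietly relies on keeping $p$ fixed, which deserves a word of justification (e.g.\ tensoring with $\kappa(p)$ to annihilate generators attached to other primes). Your trap subcategory $\mathcal{N}=\{A\mid\supp_{\mathbb{Z}}\K_*(A(Y))\subseteq\mathrm{U}_Y \text{ for all } Y\}$ handles this more cleanly: verifying that $\mathcal{N}$ is localizing and contains every generator gives $\mathcal{L}\subseteq\mathcal{N}$ in one stroke, with no prime-chasing.

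One imprecision in the triangle-closure of $\mathcal{N}$ needs repair. A triangle $A\to B\to C\to\Sigma A$ in $\mathcal{B}(X)$ does not hand you a short exact sequence of abelian groups $\K_*(A(Y))\hookrightarrow\K_*(B(Y))\twoheadrightarrow\K_*(C(Y))$; it only gives the six-term periodic exact sequence. Your ``basic observation'' about the middle term of a short exact sequence is true, but it does not directly transfer, because the SES decomposition of a long exact sequence has images and kernels as its outer terms, and $\supp_{\mathbb{Z}}$ is not monotone under passage to subgroups or quotients (compare $\mathbb{Z}\subseteq\mathbb{Q}$, with supports $\Spec\mathbb{Z}$ and $\{0\}$ respectively). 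The clean fix is either to apply the Five Lemma to multiplication by $p$ on the six-term sequence, or, more in the spirit of the paper, to note via Lemma~\ref{iff} that $p\notin\supp_{\mathbb{Z}}\K_*(A(Y))$ means $A(Y)\otimes\kappa(p)\cong 0$, and then exactness of $-\otimes\kappa(p)$ on the triangle $A(Y)\to B(Y)\to C(Y)$ in $\mathcal{B}$ forces $B(Y)\otimes\kappa(p)\cong 0$. With that amendment your argument is complete.
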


\begin{proof}
By Proposition \ref{uv}, the sets $\mathrm{U}_{Y_i}^\mathcal{L}$ determine the sets $\mathrm{V}_{Y_i}^\mathcal{L}$ and therefore, by Lemma \ref{vgene}, the localizing subcategory $\mathcal{L}.$ 

It remains to show that if $\mathcal{L}=\langle \R_{Y_i}^p \mid p \in \bigcap_{ 
 Y_j \in B_{Y_i}}\mathrm{U}_{Y_j} ,\; i=1,\dots,m    \rangle $, then $\mathrm{U_{Y_i}}=\mathrm{U_{Y_i}^\mathcal{L}}$ for all $i$.
 
 Let $p \in \mathrm{U}_{Y_i} $, then $p \in \bigcup_{j, Y_i \in B_{Y_j}} \bigcap_{ Y_k \in B_{Y_j}} \mathrm{U}_{Y_k}$. Therefore, there exists $j$ such that $\R_{Y_j}^p \in \mathcal{L}$ and $\R_{Y_j}^p(Y_i)\ncong 0$, since $Y_i \in B_{Y_j}$. This, in turn, implies that $p \in \mathrm{supp }_\mathbb{Z}\K_*(\R_{Y_j}^p(Y_i))$. Thus $ p \in \mathrm{U}_{Y_i}^\mathcal{L}$.
 
 Now let $p \in \mathrm{U}_{Y_i}^\mathcal{L} $. Then, as in the proof of Lemma \ref{u}, there exists $j$ such that 
$\R_{Y_j}^p \in \mathcal{L}$ and $\R_{Y_j}^p(Y_i)\ncong 0$. It follows that for any set of generators of $\mathcal{L}$, at least one generator has to not vanish at $Y_i$ because $\mathcal{L}$ contains an object not vanishing at $Y_i$ and exact triangles in $\mathcal{L}$ come from short semi-split exact sequences of $\mathrm{C}^*$-algebras over $X$. In particular, there must exist $k$ with $p \in \bigcap_{Y_l \in B_{Y_k}}\mathrm{U}_{Y_l}$ and $\R_{Y_k}^p \in \mathcal{L}$. Since $Y_i \in B_{Y_k}$, we get $p \in \mathrm{U}_{Y_i}$.
\end{proof}

\begin{remark}
Remark \ref{support} identifies the set $\{ \mathrm{U}_{Y_i}^\mathcal{L} \}_{i=1}^m $ with $\supp \mathcal{L}$. So Theorem \ref{maintt} shows that every localizing subcategory is uniquely determined by its support and describes which sets can appear as the support of a localizing subcategory.
\end{remark}

\subsection{Case of extensions}
To illustrate Theorem \ref{maintt}, let $X=\{1,2\}$ be the \emph{Sierpiński} space, a two-point topological space whose open sets are $$ \mathbb{O}(X)=\{ \emptyset, \{2\}, \{1,2\}\}.$$
The category of $\mathrm{C}^*$-algebras over $X$ is equivalent to the category of extensions of $\mathrm{C}^*$-algebras. We have three non-empty locally closed sets, $\LCX = \{\{1\},\{2\},\{1,2\}\}$. By Remark~\ref{doIneed}, the conditions on the sets $\mathrm{U}_Y$ for $Y \in \LCX$ translate to $$\mathrm{U}_{\{1\}} \subseteq \mathrm{U}_{\{2\}} \cup \mathrm{U}_{\{1,2\}} , \;\; \mathrm{U}_{\{2\}} \subseteq \mathrm{U}_{\{1\}} \cup \mathrm{U}_{\{1,2\}}, \;\; \mathrm{U}_{\{1,2\}} \subseteq \mathrm{U}_{\{1\}} \cup \mathrm{U}_{\{2\}}. $$ Therefore, we get:

\begin{cor} \label{extensions}
 There is a bijection between localizing subcategories of the Kasparov category of extensions of $\mathrm{C}^*$-algebras and those triples of
 subsets of $\Spec \mathbb{Z}$ which have the property that each one is inside the union of the other two. The bijection and its inverse map are given by
 \begin{gather*}
 \mathcal{L}  \longmapsto \begin{pmatrix}\{\supp_\mathbb{Z}\K_*(I)\mid I\triangleleft A \in \mathcal{L}\}\\ \{\supp_\mathbb{Z}\K_*(A)\mid I\triangleleft A \in \mathcal{L}\} \\ \{\supp_\mathbb{Z}\K_*(A/I)\mid I\triangleleft A \in \mathcal{L}\} \end{pmatrix}
\\
\left\langle
    \begin{array}{c|c}
      \kappa(p) \triangleleft \kappa(p) &  p \in  \mathrm{U}_{\{2\}} \cap \mathrm{U}_{\{1,2\}} \\
      0\triangleleft \kappa(q) &q \in \mathrm{U}_{\{1,2\}} \cap \mathrm{U}_{\{1\}} \\ 
      \kappa(s)[1] \triangleleft 0 & s \in \mathrm{U}_{\{1\}} \cap \mathrm{U}_{\{2\}} \\
    \end{array}
    \right\rangle   \longmapsfrom \begin{pmatrix}\mathrm{U}_{\{2\}}\\\mathrm{U}_{\{1,2\}}\\\mathrm{U}_{\{1\}} \end{pmatrix}. 
\end{gather*}
 \end{cor}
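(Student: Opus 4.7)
The plan is to specialize Theorem \ref{maintt} directly to the Sierpi\'nski space $X=\{1,2\}$, translating both the constraints on the $\mathrm{U}$-sets and the generators $\R_Y^p$ into the language of $\mathrm{C}^*$-algebra extensions. The first step is to compute the three maximal boxes from (\ref{conditions}): a direct check of the cases gives $B_{\{1\}}=\{\{1\},\{2\}\}$, $B_{\{2\}}=\{\{2\},\{1,2\}\}$, and $B_{\{1,2\}}=\{\{1\},\{1,2\}\}$. For each $Y\in\LCX$ there are then exactly two indices $j$ with $Y\in B_{Y_j}$, so the condition of Theorem \ref{maintt}, reformulated via Remark \ref{doIneed}, becomes precisely the three inclusions already displayed immediately before the corollary (each $\mathrm{U}$ contained in the union of the other two). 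This yields the description of admissible triples.

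Next, I would identify the three generating objects $\R_Y^p$ in extension form by reading off (\ref{cconditions}) and Remark \ref{local-gener}. Writing an extension-type object as the triple (ideal, total algebra, quotient) obtained by evaluating at $(\{2\},\{1,2\},\{1\})$: $\R_{\{2\}}^p$ gives $(\kappa(p),\kappa(p),0)$, i.e., the extension $\kappa(p)\triangleleft\kappa(p)$; $\R_{\{1,2\}}^p$ gives $(0,\kappa(p),\kappa(p))$, i.e., $0\triangleleft\kappa(p)$; and $\R_{\{1\}}^p$ gives $(\kappa(p)[1],0,\kappa(p))$ in $\mathfrak{KK}$, i.e., the connecting extension $\kappa(p)[1]\triangleleft 0$ (the quotient being forced to be $\kappa(p)$ since the boundary map in the six-term sequence with vanishing total algebra is an isomorphism). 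The intersections $\bigcap_{Y_j\in B_{Y_i}}\mathrm{U}_{Y_j}$ from Theorem \ref{maintt} specialize to $\mathrm{U}_{\{2\}}\cap\mathrm{U}_{\{1,2\}}$, $\mathrm{U}_{\{1\}}\cap\mathrm{U}_{\{1,2\}}$, and $\mathrm{U}_{\{1\}}\cap\mathrm{U}_{\{2\}}$, which reproduce the right-hand column in the displayed inverse map.

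For the direct map, I would invoke Remark \ref{support} and Lemma \ref{iff} to identify $\mathrm{U}_Y^{\mathcal{L}}$ with $\bigcup_{A\in\mathcal{L}}\supp_\mathbb{Z}\K_*(A(Y))$, then apply the dictionary $A(\{2\})=I$, $A(\{1,2\})=A$, $A(\{1\})=A/I$ for an extension $I\triangleleft A\twoheadrightarrow A/I$ to obtain the three rows of the stated direct map. The only real difficulty is the bookkeeping -- aligning the three locally closed sets with the ideal/total/quotient positions and with the three rows of the displayed matrices -- since all the substantive work has already been carried out in Theorem \ref{maintt}.
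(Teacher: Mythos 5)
Your proposal is correct and follows essentially the same route as the paper: the corollary is obtained by specializing Theorem \ref{maintt} to $X=\{1,2\}$, computing the three maximal boxes $B_{\{1\}}=\{\{1\},\{2\}\}$, $B_{\{2\}}=\{\{2\},\{1,2\}\}$, $B_{\{1,2\}}=\{\{1\},\{1,2\}\}$, and reading off both the admissibility constraints (via Remark \ref{doIneed}) and the generators from (\ref{cconditions}). Your identification of the three objects $\R_Y^p$ as the extensions $(\kappa(p),\kappa(p),0)$, $(0,\kappa(p),\kappa(p))$, $(\kappa(p)[1],0,\kappa(p))$ and of the intersections indexing them is exactly what the paper's displayed inverse map encodes, and the only thing you add beyond the paper is the (correct and useful) explicit remark that the vanishing of the total algebra forces the quotient of $\R_{\{1\}}^p$ to be $\kappa(p)$ via the boundary map.
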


This example already demonstrates a difference between the classification of Theorem \ref{maintt} and other instances in the literature, where the triangulated category $\T$ in question carries an action of a commutative ring. In the latter case, as explained in the introduction, the lattice of localizing subcategories  $\mathrm{Loc}(\T)$ is isomorphic to the lattice of subsets of some topological space $Y$, where, in addition, the topology on $Y$ determines certain structure on $\mathrm{Loc}(\T)$. In this case, one can regard $Y$ as a good candidate for a topological space associated to $\T$. However, this construction is not possible for $\mathcal{B}(X)$.

\begin{theorem} \label{negat}
The lattice of localizing subcategories $\mathrm{Loc}\big(\mathcal{B}(X)\big)$ of the bootstrap category $\mathcal{B}(X)$ is not isomorphic to a sublattice of the subset lattice $\mathcal{P}(S)$ for any set $S$.
\end{theorem}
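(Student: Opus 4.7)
The strategy is to deduce the theorem from the classical fact that every sublattice of a power set $\mathcal{P}(S)$ is distributive, and hence to exhibit a non-distributive sublattice of $\mathrm{Loc}(\mathcal{B}(X))$. The natural candidate is $M_3$, the five-element diamond with three pairwise incomparable middle elements $a,b,c$; this lattice fails distributivity because $a\wedge(b\vee c)=a$ while $(a\wedge b)\vee(a\wedge c)=\bot$.

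Treating first the Sierpi\'nski case $X=\{1,2\}$, I fix a prime $p\in\Spec\Z$ and let $\mathcal{L}_\bot,\mathcal{L}_a,\mathcal{L}_b,\mathcal{L}_c,\mathcal{L}_\top$ be the five localizing subcategories that Corollary \ref{extensions} assigns to the valid triples
$$(\emptyset,\emptyset,\emptyset),\;(\{p\},\{p\},\emptyset),\;(\{p\},\emptyset,\{p\}),\;(\emptyset,\{p\},\{p\}),\;(\{p\},\{p\},\{p\}),$$
ordered as $(\mathrm{U}_{\{2\}},\mathrm{U}_{\{1,2\}},\mathrm{U}_{\{1\}})$. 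Each triple satisfies the closure condition of Corollary \ref{extensions}, and the middle three are pairwise incomparable and strictly between $\mathcal{L}_\bot$ and $\mathcal{L}_\top$ under the inclusion-preserving bijection of Theorem \ref{maintt}.

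The joins are straightforward: the join of any two localizing subcategories must contain all their generators, so its support triple contains the coordinate-wise union; for any two distinct middle triples this union is $(\{p\},\{p\},\{p\})$, forcing $\mathcal{L}_a\vee\mathcal{L}_b=\mathcal{L}_a\vee\mathcal{L}_c=\mathcal{L}_b\vee\mathcal{L}_c=\mathcal{L}_\top$. For the meets I use the triangulated structure: $\mathrm{U}_Y^\mathcal{L}=\emptyset$ exactly when $A(Y)\cong 0$ in $\mathcal{B}$ for every $A\in\mathcal{L}$, so for any $A\in\mathcal{L}_a\cap\mathcal{L}_b$ one has $A(\{1\})=0$ and $A(\{1,2\})=0$, and the exact triangle $\Sigma A(\{1\})\to A(\{2\})\to A(\{1,2\})\to A(\{1\})$ forces $A(\{2\})=0$ as well, so $\mathrm{FK}(A)=0$ and $A\cong 0$ in $\mathcal{B}(X)$ by Corollary \ref{uct-cor}. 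The analogous vanishing argument applied to the other two pairs yields $\mathcal{L}_a\cap\mathcal{L}_c=\mathcal{L}_b\cap\mathcal{L}_c=\mathcal{L}_\bot$, so the five subcategories form an $M_3$-sublattice and the theorem is proved for $X=\{1,2\}$.

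The main subtlety is recognising that meets of localizing subcategories are not simply read off from coordinate-wise intersections of support triples — indeed the pairwise coordinate-wise minima of the three middle triples have the shape $(\{p\},\emptyset,\emptyset)$ and violate closure, hence are not supports at all; the argument above sidesteps this by exploiting the 6-term exact sequence directly. For $|X|\geq 3$ one reaches the same conclusion via Theorem \ref{maint}: that theorem presents $\mathrm{Loc}(\mathcal{B}(X))$ as the $\Spec\Z$-indexed product of lattices of noncrossing partitions of an $(n+1)$-gon with $n+1\geq 3$, each such factor already contains the $M_3$ of noncrossing partitions on any three vertices, and fixing the remaining coordinates at $\bot$ embeds this $M_3$ into $\mathrm{Loc}(\mathcal{B}(X))$.
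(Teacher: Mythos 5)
Your proof is correct and follows essentially the same strategy as the paper: both reduce the statement to the fact that sublattices of power sets are distributive, and both exhibit the failure of distributivity using the same three $p$-local support triples $(\{p\},\{p\},\emptyset)$, $(\{p\},\emptyset,\{p\})$, $(\emptyset,\{p\},\{p\})$. The paper simply writes down one violating instance of the distributive law and reads off the lattice operations from Corollary~\ref{extensions}; you instead frame it as an embedded $M_3$ diamond, which is equivalent.

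The one place where you do genuinely more than the paper is in justifying the meets. The paper asserts without comment that the pairwise meets equal $(\emptyset,\emptyset,\emptyset)$; this is not coordinate-wise intersection (which would give a triple violating the closure condition), but rather the largest admissible triple below the intersection, which requires a small argument. Your six-term-exact-sequence argument (if $A(\{1\})$ and $A(\{1,2\})$ are $\K$-trivial then so is $A(\{2\})$, hence $\mathrm{FK}(A)=0$, hence $A\cong 0$) supplies exactly that missing step; alternatively one could argue purely on the tuple side using the order isomorphism of Theorem~\ref{maintt}, observing that the only admissible triple below $(\{p\},\emptyset,\emptyset)$ is the zero triple. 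You also make explicit, via Theorem~\ref{maint}, that the obstruction persists for $|X|\geq 3$, which the paper only gestures at in the introduction by noting the failure already occurs for noncrossing partitions of the triangle.
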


\begin{proof}

For a set $S$, any sublattice $L\subseteq \mathcal{P}(S)$ of a subset lattice is distributive; that is, $A \cap (B \cup C) = (A \cap B)\cup(A \cap C)$ for all $A,B,C \in L$.

By Corollary \ref{extensions}, elements of $\mathrm{Loc}\big(\mathcal{B}(X)\big)$ are characterized by triples of subsets of $\Spec \mathbb{Z}$, which have the property that each one is inside the union of the other two. In particular, for some prime $p \in \Spec \mathbb{Z}$, we have three localizing subcategories described by triples $(\{p\},\{p\},\emptyset)$, $(\emptyset,\{p\},\{p\})$ and $(\{p\},\emptyset,\{p\})$. However,
$(\{p\},\{p\},\emptyset) \wedge \big( (\emptyset,\{p\},\{p\}) \vee (\{p\},\emptyset,\{p\}) \big) = (\{p\},\{p\},\emptyset) \neq (\emptyset, \emptyset, \emptyset)= \big( (\{p\},\{p\},\emptyset) \wedge (\emptyset,\{p\},\{p\}) \big) \vee \big( (\{p\},\{p\},\emptyset) \wedge (\{p\},\emptyset,\{p\}) \big)$. Thus $\mathrm{Loc}\big(\mathcal{B}(X)\big)$ is not distributive.
\end{proof}

\section{Classification by noncrossing partitions} \label{Noncorssing partitions}

In this section, we describe the lattice of localizing subcategories of $\mathcal{B}(X)$ in another way, namely, by \emph{noncrossing partitions.}

\begin{definition}
For $p \in \Spec \mathbb{Z}$, we say that the localizing subcategory $\mathcal{L}$ is $p$-local if, for all $Y \in \LCX$, the set $\mathrm{U}_Y^\mathcal{L}$ is equal to $\{p\}$ or is empty.
\end{definition}

\begin{remark} \label{p-loc}
The $p\,$-local localizing subcategories are exactly the ones generated by $\R_{Y}^p$ for $Y \in I \subseteq \LCX$. Every localizing subcategory $\mathcal{L} \subseteq \mathcal{B}(X)$ can be uniquely represented by the $p\,$-local subcategories it contains, if we require that there is at most one (the largest) $p\,$-local subcategory for each $p \in \Spec \mathbb{Z}$ in this representation. This follows from Theorem \ref{maintt}, since the corresponding property is trivial for the sets $(\mathrm{U}_{Y_1}^{\mathcal{L}}, ... ,\mathrm{U}_{Y_m}^{\mathcal{L}}) \in \mathcal{P}(\Spec \mathbb{Z})^m$, where $m= |\LCX|$ is the number of non-empty intervals.
\end{remark}

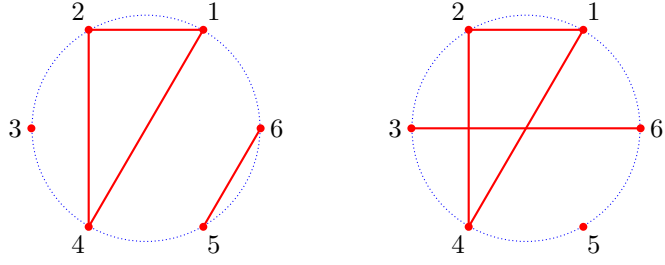
\begin{figure}
\tikzset{dot/.style={circle,fill=red,minimum size=3pt, inner sep=0pt}}
\begin{tikzpicture}

\draw[blue, densely dotted] (0,0) circle(1.5cm);
\node[regular polygon, regular polygon sides=6, minimum size=3cm] at (0,0) (A) {};

\foreach \i in {1,...,6}
    \node[ dot, fill=red] at (A.corner \i) {};
   
\draw[thick,red] (A.corner 1)--(A.corner 2)--(A.corner 4)--cycle;
\draw[thick,red] (A.corner 6)--(A.corner 5);
\foreach \i in {1,...,6}
    \node[anchor=(360/6)*\i+180] at (A.corner \i) {\i};

\draw[blue, densely dotted] (5,0) circle(1.5cm);
\node[regular polygon, regular polygon sides=6, minimum size=3cm] at (5,0) (B) {};

\foreach \i in {1,...,6}
    \node[ dot, fill=red] at (B.corner \i) {};
   
\draw[thick,red] (B.corner 1)--(B.corner 2)--(B.corner 4)--cycle;
\draw[thick,red] (B.corner 6)--(B.corner 3);
\foreach \i in {1,...,6}
    \node[anchor=(360/6)*\i+180] at (B.corner \i) {\i};

\end{tikzpicture}
\caption{The first picture shows the noncrossing partition $\{ \{1,2,4\},\{3\},\{5,6\}\}$ of the regular hexagon represented as vertices on a circle. The partition $\{ \{1,2,4\},\{3,6\},\{5\}\}$ on the second picture is crossing.} \label{fig:3}
\end{figure}

\subsubsection{Classical noncrossing partitions} 

A \emph{partition} of a given set of  $n$ elements is a collection of pairwise disjoint, nonempty subsets called \emph{blocks}, whose union is the entire set. Since being in the same block is an equivalence relation, we denote it by $\sim$. A partition of $\{1,\dots,n\}$ is \emph{noncrossing} if, when four elements with $1\leq a<b<c<d\leq n $
are such that $a\sim c$ and $b \sim d$, then the two blocks coincide, meaning $a\sim b \sim c \sim d $. The terminology comes from the fact that a noncrossing partition admits a planar representation as a partition of the vertices of a regular $n$-gon (labeled by $\{1,\dots,n\}$) with the property that the convex hulls of its blocks are pairwise non-crossing (see Figure \ref{fig:3}). The collection of noncrossing partitions of an $n$\nobreakdash-element set is denoted by $\mathrm{NC}_n$.

$\mathrm{NC}_n$ becomes a partially ordered set when partitions are ordered by \emph{refinement}: given partitions $\sigma,\tau \in \mathrm{NC}_n$, we say that $ \tau \leq \sigma$ if each block of $\sigma$ is contained in a block of $\tau$. For each $n$, the partially ordered set $\mathrm{NC}_n$ is a self-dual, bounded lattice with $C_n$ elements, where $C_n=\frac{1}{n+1}{{2n}\choose{n}}$ is the $n$th \emph{Catalan number}. Figure~\ref{fig:4} depicts this lattice for $n=4$. For the exposition of the classical theory of noncrossing partitions and the proof of these facts see~\cite[Chapter~4]{Arm09}.

\subsection{Classification}
Again let $X=\{1,2,\dots,n\}$ with the Alexandrov topology.

\begin{figure}
  \tikzset{dot/.style={circle,fill=red,minimum size=2pt, inner sep=0pt}}
\begin{tikzpicture}
\pgfmathsetmacro\r{0.6} 
\pgfmathsetmacro\h{2} 
\pgfmathsetmacro\d{0.3} 

\foreach \j in {0,...,5}
{
\draw[blue, densely dotted] (\j*2*\r+ \j*\d,0) circle(\r cm);
\node[regular polygon, regular polygon sides=4, minimum size=2*\r cm] at (\j*2*\r+ \j*\d,0) (\j) {};
\foreach \i in {1,...,4}
    \node[dot, fill=red] at (\j.corner \i) {};
}

\foreach \j in {6,...,11}
{
\draw[blue, densely dotted] (\j*2*\r+ \j*\d-6*2*\r-6*\d,\h) circle(\r cm);
\node[regular polygon, regular polygon sides=4, minimum size=2*\r cm] at (\j*2*\r+ \j*\d-6*2*\r-6*\d,\h) (\j) {};
\foreach \i in {1,...,4}
    \node[dot, fill=red] at (\j.corner \i) {};
}

\draw[red] (0.corner 1)--(0.corner 4)  (0.corner 2)--(0.corner 3);
\draw[red] (1.corner 1)--(1.corner 3)--(1.corner 4)--cycle;
\draw[red] (2.corner 1)--(2.corner 2)--(2.corner 3)--cycle;
\draw[red] (3.corner 1)--(3.corner 2)--(3.corner 4)--cycle;
\draw[red] (4.corner 2)--(4.corner 3)--(4.corner 4)--cycle;
\draw[red] (5.corner 1)--(5.corner 2)  (5.corner 3)--(5.corner 4);

\draw[red] (6.corner 1)--(6.corner 4);
\draw[red] (7.corner 2)--(7.corner 3);
\draw[red] (8.corner 1)--(8.corner 3);
\draw[red] (9.corner 2)--(9.corner 4);
\draw[red] (10.corner 1)--(10.corner 2);
\draw[red] (11.corner 3)--(11.corner 4);

\draw[blue, densely dotted] (5*\r+2.5*\d,-\h) circle(\r cm);
\node[regular polygon, regular polygon sides=4, minimum size=2*\r cm] at (5*\r+2.5*\d,-\h) (-1) {};
\foreach \i in {1,...,4}
    \node[dot, fill=red] at (-1.corner \i) {};
\draw[red] (-1.corner 1)--(-1.corner 2)--(-1.corner 3)--(-1.corner 4)--cycle;

\draw[blue, densely dotted] (5*\r+2.5*\d, 2*\h) circle(\r cm);
\node[regular polygon, regular polygon sides=4, minimum size=2*\r cm] at (5*\r+2.5*\d,2*\h) (12) {};
\foreach \i in {1,...,4}
    \node[dot, fill=red] at (12.corner \i) {};
    
\foreach \i in {0,...,5}    {
\draw[blue, densely dotted] (5*\r+2.5*\d,-\h+\r)--(\i*2*\r+\i*\d,0-\r);
\draw[blue, densely dotted] (5*\r+2.5*\d,2*\h-\r)--(\i*2*\r+\i*\d,\h+\r);}

\draw[blue, densely dotted] (0*\r+0*\d,\h-\r)--(0*\r+0*\d,0+\r);
\draw[blue, densely dotted] (0*\r+0*\d,\h-\r)--(2*\r+1*\d,0+\r);
\draw[blue, densely dotted] (0*\r+0*\d,\h-\r)--(6*\r+3*\d,0+\r);

\draw[blue, densely dotted] (2*\r+1*\d,\h-\r)--(0*\r+0*\d,0+\r);
\draw[blue, densely dotted] (2*\r+1*\d,\h-\r)--(4*\r+2*\d,0+\r);
\draw[blue, densely dotted] (2*\r+1*\d,\h-\r)--(8*\r+4*\d,0+\r);

\draw[blue, densely dotted] (4*\r+2*\d,\h-\r)--(2*\r+1*\d,0+\r);
\draw[blue, densely dotted] (4*\r+2*\d,\h-\r)--(4*\r+2*\d,0+\r);

\draw[blue, densely dotted] (6*\r+3*\d,\h-\r)--(8*\r+4*\d,0+\r);
\draw[blue, densely dotted] (6*\r+3*\d,\h-\r)--(6*\r+3*\d,0+\r);

\draw[blue, densely dotted] (8*\r+4*\d,\h-\r)--(6*\r+3*\d,0+\r);
\draw[blue, densely dotted] (8*\r+4*\d,\h-\r)--(10*\r+5*\d,0+\r);
\draw[blue, densely dotted] (8*\r+4*\d,\h-\r)--(4*\r+2*\d,0+\r);

\draw[blue, densely dotted] (10*\r+5*\d,\h-\r)--(10*\r+5*\d,0+\r);
\draw[blue, densely dotted] (10*\r+5*\d,\h-\r)--(8*\r+4*\d,0+\r);
\draw[blue, densely dotted] (10*\r+5*\d,\h-\r)--(2*\r+1*\d,0+\r);
\end{tikzpicture}  
\caption{The lattice of noncrossing partitions of a square, that is, of the 4-element set. By Theorem \ref{iso-p-loc}, it corresponds to the lattice of all $p\,$-local localizing subcategories of $\mathcal{B}(\{1,2,3\})$.}  \label{fig:4}
\end{figure}
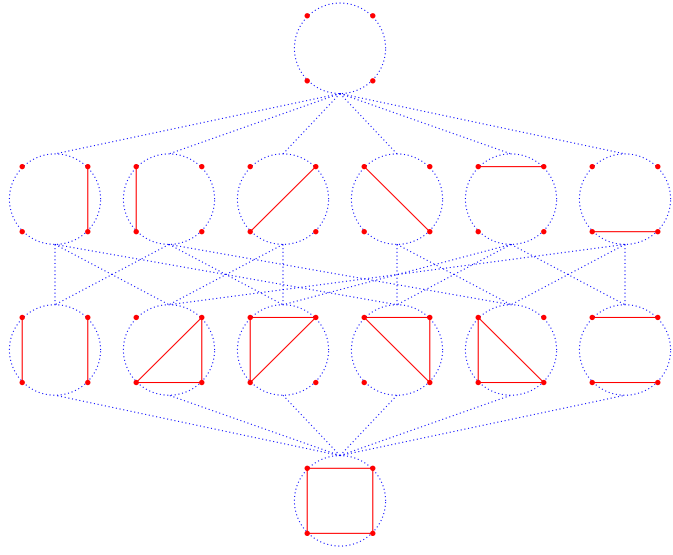

\begin{theorem} \label{iso-p-loc}
 There is a lattice isomorphism between $p$-local localizing subcategories of $\mathcal{B}(X)$ ordered by inclusion and $\mathrm{NC}_{n+1}$, the lattice of noncrossing partitions of a set with $n+1$ elements.
\end{theorem}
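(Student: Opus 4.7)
The plan is to construct a lattice anti-isomorphism $\Phi$ from $\mathrm{NC}_{n+1}$ to the lattice of $p\,$-local localizing subcategories of $\mathcal{B}(X)$, then compose with the Kreweras self-duality of $\mathrm{NC}_{n+1}$ to produce the claimed isomorphism. By Theorem~\ref{maintt} and Remark~\ref{p-loc}, specifying a $p\,$-local subcategory amounts to choosing a subset $S \subseteq \LCX$ expressible as a union of the boxes $B_Y$ it contains; call such subsets box-unions. It therefore suffices to produce a lattice anti-isomorphism between $\mathrm{NC}_{n+1}$ and the lattice of box-unions ordered by inclusion.

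The combinatorial bridge is to identify $\LCX$ with the chords of a regular $(n+1)$-gon on vertices $\{0, 1, \ldots, n\}$ via $[a, b] \leftrightarrow \{a-1, b\}$. A direct verification using (\ref{conditions}) shows that $B_{[a, b]}$ then corresponds exactly to the set of chords with one endpoint in the arc $\{a, \ldots, b\}$ and the other in the complementary arc of $\{0,\ldots,n\}$. In particular, every individual box is the set of cross-block chords of some two-block noncrossing partition. I define
\[
\Phi(\pi) := \{\{u, v\} : u \not\sim_\pi v\}, \qquad \Psi(S) := \text{the partition with } u \sim_S v \iff \{u, v\} \notin S,
\]
and aim to show that these are mutually inverse lattice anti-isomorphisms between $\mathrm{NC}_{n+1}$ and the box-unions.

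The proof reduces to three claims. First, for any box-union $S$, the relation $\sim_S$ is an equivalence relation: if $\{u, w\} \in B_{[a, b]} \subseteq S$ via some witnessing box, one of $u, w$ lies in $\{a, \ldots, b\}$ and the other outside, so any third vertex $v$ forms with one of $u, w$ a chord that again lies in $B_{[a, b]} \subseteq S$ (depending on whether $v \in [a,b]$), proving transitivity of the complement relation. Second, $\sim_S$ defines a noncrossing partition: for $a < b < c < d$ with $a \sim_S c$ and $b \sim_S d$, assuming $\{a, b\} \in S$ with witness $B_{[\alpha, \beta]}$, a short case analysis on whether $a$ or $b$ lies in $[\alpha, \beta]$ forces one of $\{a, c\}$ or $\{b, d\}$ into $B_{[\alpha, \beta]} \subseteq S$, contradicting the hypotheses. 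Third, for any $\pi \in \mathrm{NC}_{n+1}$ the set $\Phi(\pi)$ is a box-union: given a cross-block chord $\{u, v\} \in \Phi(\pi)$, I exhibit an interval $[a, b]$ that contains or avoids each block of $\pi$ entirely (call such an interval non-cutting) and separates $u$ from $v$; then $\{u, v\} \in B_{[a, b]} \subseteq \Phi(\pi)$. The non-cutting interval is produced from the nested/parallel structure of the blocks of $\pi$: the two blocks containing $u$ and $v$ are either nested (one lies inside a gap of the other, so one can cut along that gap) or disjoint in cyclic order (so one can cut between them), and an induction on the number of blocks handles blocks of $\pi$ lying in the intermediate region.

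The inverse relations $\Phi \circ \Psi = \mathrm{id}$ and $\Psi \circ \Phi = \mathrm{id}$ are then immediate from the definitions, as is the order reversal: $\pi \leq \sigma$ in refinement order iff $\Phi(\pi) \supseteq \Phi(\sigma)$. Composing $\Phi$ with the Kreweras self-duality of $\mathrm{NC}_{n+1}$ converts the anti-isomorphism into the asserted lattice isomorphism. The main technical obstacle is the third claim: producing, for every cross-block chord of a noncrossing partition, an explicit non-cutting separating interval is the crucial combinatorial input, and while it follows from the standard recursive structure of noncrossing partitions, it is what genuinely ties the triangulated-categorical lattice to $\mathrm{NC}_{n+1}$.
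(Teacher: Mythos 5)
Your proposal arrives at the same isomorphism as the paper's proof and shares its core combinatorial dictionary: identify intervals $[a,b]$ with chords of the $(n+1)$-gon, build the partition by declaring two vertices equivalent precisely when the chord between them has empty support, and feed the constraint from Theorem~\ref{maintt} (the ``box-union'' condition, as you correctly reformulate it) into the argument. The differences are in execution rather than strategy. First, you establish that $\sim_S$ is an equivalence relation and that $\Psi(S)$ is noncrossing purely by exploiting the arc description of each box $B_Y$ (one endpoint of a chord in the arc $\{a,\ldots,b\}$, the other outside), whereas the paper rederives the required inclusions such as $\mathrm{U}^{\mathcal{L}}_{[x_2,x_3-1]}\subseteq \mathrm{U}^{\mathcal{L}}_{[x_1,x_2-1]}\cup\mathrm{U}^{\mathcal{L}}_{[x_1,x_3-1]}$ from the exact triangles of Lemma~\ref{gen} via $\mathrm{KK}_*(X;-,A)$. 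Your route is leaner because it recycles the box-union characterization you already cite; the paper's route makes visible how Mayer--Vietoris exactness produces those inclusions in the first place. Second, you package the correspondence as an anti-isomorphism and then invoke Kreweras self-duality to flip it. This works, but it is an avoidable detour: the paper simply takes the order on $\mathrm{NC}_{n+1}$ with coarser partitions smaller, under which $\psi$ is already order-preserving (and self-duality of $\mathrm{NC}_{n+1}$ makes the convention immaterial in any case). Third, and most substantively, your Claim~3 --- that every cross-block chord of a noncrossing $\pi$ lies in a non-cutting separating box $B_{[a,b]}\subseteq\Phi(\pi)$ --- is precisely the surjectivity step the paper also needs, and you only gesture at the construction. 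The paper makes it explicit: from one endpoint, spread out in both cyclic directions absorbing vertices until you first meet the block of the other endpoint, and lump the rest with that block; this yields a two-arc decomposition coarsening $\pi$ and separating the two endpoints. You should spell out something of this kind, because this is exactly where the noncrossing hypothesis is consumed; without it the surjectivity argument has no teeth.
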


\begin{proof}
Denote the lattice of $p$-local localizing subcategories of $\mathcal{B}(X)$ by $L_n$. We are going to construct a lattice isomorphism $$\psi : L_n \stackrel{\sim}{\longrightarrow} \mathrm{NC}_{n+1}.$$  

By Theorem \ref{maintt}, a localizing subcategory $\mathcal{L} \in L_n$ is determined by the sets $\mathrm{U}^{\mathcal{L}}_{[a,b]}$ for $1\leq a \leq b\leq n.$ Given $\mathcal{L}$, we define a symmetric relation $\psi(\mathcal{L})$ on $\{1,\dots ,n+1\}$ by $a \sim b+1, \, b+1 \sim a \iff \mathrm{U}^{\mathcal{L}}_{[a,b]} = \emptyset$ for $a\leq b$ and $a\sim a$ for all $a \in \{1,\dots ,n+1\}$.

We want to show that $\psi(\mathcal{L})$ is indeed a noncrossing partition. First, we prove transitivity. Let $a,b,c \in \{1,\dots,n+1\}$ and $a\sim b, \; b\sim c$. If $a=b$ or $b=c$ or $a=c$ the assertion is trivial; so we assume they are all distinct. Define $x_1:=\min\{a,b,c\},\, x_3:=\max\{a,b,c\}$ and let $x_2$ be the remaining third point. Thus $x_1<x_2<x_3$.

In the proof of Lemma \ref{gen}, we showed that for $V \setminus W,W\setminus V, V \cup W  \in \LCX$ there is the following exact triangle in $\mathcal{B}(X)$:
$$ \Sigma \R_{W \setminus V} \rightarrow \R_{V} \rightarrow \R_{V \cup W} \rightarrow \R_{W \setminus V}.$$
Setting $V=[x_1,x_2-1]$ and $W=[x_1,x_3-1]$, and applying the functor $\mathrm{KK}_*(X;-,A)$ for any $A \in \mathcal{B}(X)$ to this triangle, we get the six term exact sequence 

\begin{displaymath}
    \xymatrix@C=1.2em{ \mathrm{KK}_0(X;\R_{[x_2,x_3-1]},A)  \ar[r]  & \mathrm{KK}_0(X;\R_{[x_1,x_3-1]} ,A) \ar[r] & \mathrm{KK}_0(X;\R_{[x_1,x_2-1]},A)  \ar[d] \\
               \mathrm{KK}_1(X;\R_{[x_1,x_2-1]},A)  \ar[u]  & \mathrm{KK}_1(X;\R_{[x_1,x_3-1]} ,A)  \ar[l] & \mathrm{KK}_1(X;\R_{[x_2,x_3-1]},A) \ar[l] }
\end{displaymath}

Theorem \ref{repres} gives $\mathrm{KK}_*(X;\R_Y,A) \cong \mathrm{FK}_Y(A)=\K_*(A(Y))$. Hence

$$
    \xymatrix{ \K_0(A([x_2,x_3-1]))  \ar[r]  & \K_0(A([x_1,x_3-1]))  \ar[r] & \K_0(A([x_1,x_2-1]))   \ar[d] \\
               \K_1(A([x_1,x_2-1]))   \ar[u]  & \K_1(A([x_1,x_3-1]))   \ar[l] & \K_1(A([x_2,x_3-1]))  \ar[l]  }
$$
The exactness of the latter sequence implies 
\begin{multline*}
\hspace*{-0.35cm}  \supp_\mathbb{Z} \K_*(A([x_2,x_3-1])) \subseteq  \supp_\mathbb{Z} \K_*(A([x_1,x_2-1])) \cup \supp_\mathbb{Z}\K_*(A([x_1,x_3-1])),\\
 \supp_\mathbb{Z} \K_*(A([x_1,x_3-1])) \subseteq  \supp_\mathbb{Z} \K_*(A([x_2,x_3-1])) \cup \supp_\mathbb{Z} \K_*(A([x_1,x_2-1])),\\
 \supp_\mathbb{Z} \K_*(A([x_1,x_2-1])) \subseteq  \supp_\mathbb{Z} \K_*(A([x_1,x_3-1])) \cup \supp_\mathbb{Z} \K_*(A([x_2,x_3-1])).
\end{multline*}
Therefore, by definition $\mathrm{U}^{\mathcal{L}}_{[x_2,x_3-1]} \subseteq \mathrm{U}^{\mathcal{L}}_{[x_1,x_2-1]} \cup \mathrm{U}^{\mathcal{L}}_{[x_1,x_3-1]}$, $\mathrm{U}^{\mathcal{L}}_{[x_1,x_3-1]} \subseteq \mathrm{U}^{\mathcal{L}}_{[x_2,x_3-1]}$ $\cup \mathrm{U}^{\mathcal{L}}_{[x_1,x_2-1]}$, $\mathrm{U}^{\mathcal{L}}_{[x_1,x_2-1]} \subseteq \mathrm{U}^{\mathcal{L}}_{[x_1,x_3-1]} \cup \mathrm{U}^{\mathcal{L}}_{[x_2,x_3-1]}$. So for any distinct $i, j =1,\dots,3$ such that $a=x_i$ and $c=x_j$, we get $\mathrm{U}^{\mathcal{L}}_{[a,c-1]} = \emptyset$; thus $a \sim c$, proving that $\psi(\mathcal{L})$ is a partition.

Now let $1\leq a<b<c<d \leq n+1$ and $a \sim c,\, b \sim d$; so $\mathrm{U}^{\mathcal{L}}_{[a,c-1]}=\emptyset$ and $\mathrm{U}^{\mathcal{L}}_{[b,d-1]} = \emptyset$.  In the proof of Lemma \ref{gen}, we also had a triangle

 $$\Sigma \R_W \rightarrow \R_V \rightarrow \R_{V \cup W}\oplus \R_{V \cap W} \rightarrow \R_W.$$ After setting $V=[a,c-1]$ and $W=[b,d-1]$, the same argument as above gives that $\mathrm{U}^{\mathcal{L}}_{[a,d-1]} \cup \mathrm{U}^{\mathcal{L}}_{[b,c-1]} \subseteq \mathrm{U}^{\mathcal{L}}_{[a,c-1]} \cup \mathrm{U}^{\mathcal{L}}_{[b,d-1]}$. Thus $a\sim b\sim c \sim d$. So the partition $\psi(\mathcal{L})$ is noncrossing.

 If $\psi(\mathcal{L})=\psi(\mathcal{L'})$, then $\mathrm{U}^{\mathcal{L}}_Y=\mathrm{U}^{\mathcal{L'}}_Y$ for all $Y \in \LCX$ since $\mathcal{L},\mathcal{L'}$ are $p\,$-local. So $\mathcal{L}=\mathcal{L'}$ by Theorem \ref{maintt}. So $\psi$ is injective.

\begin{figure}
\tikzset{dot/.style={circle,fill=red,minimum size=3pt, inner sep=0pt}}
\begin{tikzpicture}

\draw[blue, densely dotted] (0,0) circle(1.5cm);
\node[regular polygon, regular polygon sides=12, minimum size=3cm] at (0,0) (A) {};

\foreach \i in {1,...,12}
    \node[ dot, fill=red] at (A.corner \i) {};
   
\draw[thick,red] (A.corner 1)--(A.corner 2)--(A.corner 3)--(A.corner 4)--(A.corner 10)--(A.corner 11)--(A.corner 12)--cycle;
\draw[thick,red] (A.corner 5)--(A.corner 6)--(A.corner 7)--(A.corner 8)--(A.corner 9)--cycle;

    \node[anchor=(360/12)*1+180] at (A.corner 1) {1};
    \node[anchor=(360/12)*4.8+180] at (A.corner 4) {$a$};
    \node[anchor=(360/12)*10.8+180] at (A.corner 9) {$b+1$};
    
\draw[blue, densely dotted] (5,0) circle(1.5cm);
\node[regular polygon, regular polygon sides=12, minimum size=3cm] at (5,0) (B) {};

\foreach \i in {1,...,12}
    \node[ dot, fill=red] at (B.corner \i) {};
 
\draw[thick,red] (B.corner 4)--(B.corner 6)--(B.corner 12)--cycle;
\draw[thick,red] (B.corner 8)--(B.corner 9)--(B.corner 11)--cycle;
\draw[densely dashed,red] (B.corner 1)--(B.corner 2)--(B.corner 3)--(B.corner 4)--(B.corner 5)--(B.corner 6)--(B.corner 7)--(B.corner 12)--cycle;
 \draw[densely dashed,red] (B.corner 8)--(B.corner 9)--(B.corner 10)--(B.corner 11)--cycle;
    
    \node[anchor=(360/12)*1+180] at (B.corner 1) {1};
    \node[anchor=(360/12)*4.8+180] at (B.corner 4) {$a$};
    \node[anchor=(360/12)*10.8+180] at (B.corner 9) {$b+1$};

\end{tikzpicture}
\caption{The first picture shows the decomposition into two connected blocks corresponding to the interval $[a,b]$. The second picture is an example of a ``separating'' decomposition (indicated by dashed lines) for a noncrossing partition drawn with bold lines. } \label{fig:5}  
\end{figure}
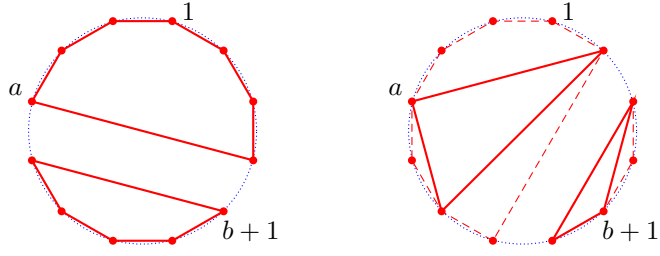
 
Now we prove that $\psi$ is surjective. The subintervals of $[1, n]$ are in one-to-one correspondence with the decompositions of the $n + 1$-gon into two (nonempty) connected subsets (see Figure~\ref{fig:5}). Here $[a,b] \in \LCX$ corresponds to the decomposition into $[a+1,b+1]$ and its complement. Let $\mathcal{L} = \langle \R_{[a,b]}^p \rangle$. If $1\leq x <y\leq n+1$, then~(\ref{conditions}) implies $x \nsim y$ in $\psi(\mathcal L)$ if and only if $x \leq a <y \leq b+1$ or $a<x \leq b+1 <y$. This exactly means that $\psi(\mathcal L)$ is a decomposition of $n + 1$-gon into two connected blocks (as in Figure~\ref{fig:5}). Since $\psi$ is injective, it gives a bijection between $p$-local localizing subcategories generated by a single interval and noncrossing partitions which are decompositions into two connected subsets.

Given a noncrossing partition $\sigma \in \mathrm{NC}_{n+1}$, $\psi^{-1}(\sigma)$ should be a $p\,$-local localizing subcategory $\mathcal{L}$ with $\mathrm{U}_{[a,b]}^{\mathcal{L}} = \emptyset$ if and only if $a \sim b+1$ in $\sigma$. For such an $\mathcal{L}$ to exist, we must show that this family of subsets $\mathrm{U}_{[a,b]}^{\mathcal{L}}$ satisfies the condition in Lemma~\ref{u}. This is trivially satisfied if $\mathrm{U}_{[a,b]}=\emptyset$. If $\mathrm{U}_{[a,b]}=\{p\}$ then $a$ and $b+1$ are in different blocks of $\sigma$. By the noncrossing property, one finds another decomposition into two connected subsets (corresponding by~$\psi$ to an interval $[c,d]$) such that it contains~$\sigma$, and $a$ and $b+1$ are in different blocks of the decomposition. We call these decompositions ``separating''. To construct this, for example, one could move from vertex $a$ on the $n+1$-gon clockwise and counterclockwise connecting all vertices to~$a$ along the way until the block of $b+1$ is reached, and connect all the remaining vertices to $b+1$ (see Figure~\ref{fig:5}).  This implies that $[a,b] \in B_{[c,d]}$. Moreover, if $x \nsim y+1$ in the decomposition corresponding to $[c,d]$ then $x \nsim y+1$ also in~$\sigma$. Hence if $[x,y] \in B_{[c,d]}$ then $\mathrm{U}_{[x,y]}=\{p\}$ for $\sigma$. In other words, the localization condition is satisfied. Therefore, $\psi$ is a bijection.

It is straightforward to see that $\psi$ and $\psi^{-1}$ are inclusion and refinement preserving, respectively. Therefore, $\psi$ is an isomorphism of lattices.
  \end{proof}
As a corollary, we get our main result
\begin{theorem}
\label{maint}
 The lattice of localizing subcategories of $\mathcal{B}(X)$ is isomorphic to $\prod_{p \in \mathrm{Spec }\mathbb{Z}}\mathrm{NC}_{n+1}^p$.
\end{theorem}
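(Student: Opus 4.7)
The plan is to combine Theorem \ref{iso-p-loc} with Remark \ref{p-loc} to decompose the classification prime by prime. The key observation from Theorem \ref{maintt} is that a localizing subcategory $\mathcal{L}$ is encoded by the tuple $(\mathrm{U}_Y^\mathcal{L})_{Y \in \LCX}$ of subsets of $\Spec \mathbb{Z}$, satisfying the self-consistency condition. Since each $\mathrm{U}_Y^\mathcal{L}$ is a subset of $\Spec \mathbb{Z}$, restricting to a single prime $p \in \Spec \mathbb{Z}$ yields data $(\mathrm{U}_Y^\mathcal{L} \cap \{p\})_{Y \in \LCX}$ which satisfies the same condition and therefore corresponds to a $p$-local localizing subcategory.

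First, I would define the candidate isomorphism
$$\Phi : \mathrm{Loc}(\mathcal{B}(X)) \longrightarrow \prod_{p \in \Spec \mathbb{Z}} \mathrm{NC}_{n+1}^p, \qquad \mathcal{L} \longmapsto \bigl(\psi(\mathcal{L}_p)\bigr)_{p \in \Spec \mathbb{Z}},$$
where $\mathcal{L}_p := \langle \R_Y^p \mid Y \in \LCX,\, \R_Y^p \in \mathcal{L} \rangle$ is the largest $p$-local subcategory contained in $\mathcal{L}$ (well-defined by Remark \ref{p-loc}), and $\psi$ is the isomorphism provided by Theorem \ref{iso-p-loc}.

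Next I would verify bijectivity. For injectivity: if $\Phi(\mathcal{L}) = \Phi(\mathcal{L}')$, then $\mathcal{L}_p = \mathcal{L}'_p$ for every $p$ by Theorem \ref{iso-p-loc}, hence $\mathrm{U}_Y^{\mathcal{L}} \cap \{p\} = \mathrm{U}_Y^{\mathcal{L}'} \cap \{p\}$ for every $Y$ and $p$; taking the union over all $p$ gives $\mathrm{U}_Y^{\mathcal{L}} = \mathrm{U}_Y^{\mathcal{L}'}$, and Theorem \ref{maintt} forces $\mathcal{L} = \mathcal{L}'$. For surjectivity, given a tuple $(\sigma_p)_p \in \prod_p \mathrm{NC}_{n+1}^p$, set $\mathcal{M}_p := \psi^{-1}(\sigma_p)$ and take $\mathcal{L} := \langle \mathcal{M}_p \mid p \in \Spec \mathbb{Z} \rangle$. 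Using Lemma \ref{vgene} and Proposition \ref{uv}, $\mathrm{U}_Y^\mathcal{L} = \bigcup_p \mathrm{U}_Y^{\mathcal{M}_p}$; restricting to the prime $p$ recovers $\mathcal{M}_p$ and hence $\sigma_p$, so $\Phi(\mathcal{L}) = (\sigma_p)_p$.

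Finally, I would verify that $\Phi$ is a lattice isomorphism. Since both sides are complete lattices, it suffices to check that $\Phi$ and $\Phi^{-1}$ preserve the inclusion order, because an order isomorphism of lattices is automatically a lattice isomorphism. But $\mathcal{L} \subseteq \mathcal{L}'$ is equivalent to $\mathrm{U}_Y^\mathcal{L} \subseteq \mathrm{U}_Y^{\mathcal{L}'}$ for all $Y$ by Theorem \ref{maintt}, which is equivalent to $\mathcal{L}_p \subseteq \mathcal{L}'_p$ for every $p$, which under $\psi$ corresponds componentwise to refinement in $\mathrm{NC}_{n+1}^p$ (this is the lattice order on the product). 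The main (mild) obstacle is the surjectivity check: one must confirm that the join of arbitrarily many $p$-local subcategories across all primes returns exactly the prescribed tuple; this relies essentially on the fact that the support sets split disjointly across primes, so joining in $\mathcal{B}(X)$ corresponds to union of supports and does not create new $\{p\}$-entries for primes outside the union.
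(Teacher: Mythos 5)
Your proposal is correct and takes essentially the same route as the paper, which simply states that the theorem follows directly from Theorem~\ref{iso-p-loc} and Remark~\ref{p-loc}; you are fleshing out that one-line argument with the explicit map $\mathcal L \mapsto (\psi(\mathcal L_p))_p$, and the disjointness-across-primes observation you flag at the end is precisely the content that Remark~\ref{p-loc} packages when it says every localizing subcategory is uniquely represented by its maximal $p$-local pieces.
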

\begin{proof}
 The statement directly follows from Theorem \ref{iso-p-loc} and Remark \ref{p-loc}.
\end{proof}

\bibliography{bib}

\begin{thebibliography}{10}

\bibitem{Arm09}
Drew Armstrong.
\newblock {\em Generalized Noncrossing Partitions and Combinatorics of Coxeter
  Groups}.
\newblock Mem. Amer. Math. Soc. American Mathematical Society, Providence,
  R.I., 2009.

\bibitem{BIK08}
Dave Benson, Srikanth~B. Iyengar, and Henning Krause.
\newblock Local cohomology and support for triangulated categories.
\newblock {\em Ann. Sci. {\'E}c. Norm. Sup{\'e}r.}, 41:573--619, 2008.

\bibitem{BIK11}
Dave Benson, Srikanth~B. Iyengar, and Henning Krause.
\newblock Stratifying modular representations of finite groups.
\newblock {\em Ann. of Math.}, 174:1643--1684, 2011.

\bibitem{AB02}
Alexander Bonkat.
\newblock {\em Bivariante {K}-Theorie f{\"u}r Kategorien projektiver Systeme
  von {C}*-Algebren}.
\newblock PhD thesis, Westf. Wilhelms-Universit{\"a}t M{\"u}nster, 2002.

\bibitem{Del11}
Ivo Dell'Ambrogio.
\newblock Localizing subcategories in the bootstrap category of separable
  {C}*-algebras.
\newblock {\em J. K-theory}, 8:493--505, 2011.

\bibitem{HS98}
Michael~J. Hopkins and Jeffrey~H. Smith.
\newblock Nilpotence and stable homotopy theory {II}.
\newblock {\em Ann. of Math.}, 148(1):1--49, 1998.

\bibitem{Mat58}
Eben Matlis.
\newblock Injective modules over {N}oetherian rings.
\newblock {\em Pacific J. Math.}, 8(3):511--528, 1958.

\bibitem{MN09}
Ralf Meyer and Ryszard Nest.
\newblock {C}*-algebras over topological spaces: the bootstrap class.
\newblock {\em M{\"u}nster J. Math.}, 2:215--252, 2009.

\bibitem{MN12}
Ralf Meyer and Ryszard Nest.
\newblock C*-algebras over topological spaces: Filtrated {K}-theory.
\newblock {\em Canad. J. Math.}, 64:368--408, 2012.

\bibitem{Nee92}
Amnon Neeman.
\newblock The chromatic tower for {D(R)}.
\newblock {\em Topology}, 31(3):519--532, 1992.

\bibitem{RS87}
Jonathan Rosenberg and Claude Schochet.
\newblock The {K}ünneth theorem and the universal coefficient theorem for
  {K}asparov’s generalized {K}-functor.
\newblock {\em Duke Math. J.}, 55(2):431--474, 06 1987.

\end{thebibliography}
\bibliographystyle{plain}
 \end{document}